\DeclareMathOperator{\qdim}{qdim}
\DeclareMathOperator{\tr}{tr}
\DeclareMathOperator{\glob}{glob}
\begin{document}
\input amssym.def
\setcounter{equation}{0}
\newcommand{\wt}{{\rm wt}}
\newcommand{\spa}{\mbox{span}}
\newcommand{\Res}{\mbox{Res}}
\newcommand{\End}{\mbox{End}}
\newcommand{\Ind}{\mbox{Ind}}
\newcommand{\Hom}{\mbox{Hom}}
\newcommand{\Mod}{\mbox{Mod}}
\newcommand{\Rep}{\mbox{Rep}}
\newcommand{\m}{\mbox{mod}\ }
\renewcommand{\theequation}{\thesection.\arabic{equation}}
\numberwithin{equation}{section}

\def \End{{\rm End}}
\def \Aut{{\rm Aut}}
\def \Z{\mathbb Z}
\def \H{\mathbb H}
\def \M{\Bbb M}
\def \C{\mathbb C}
\def \R{\mathbb R}
\def \Q{\mathbb Q}
\def \N{\mathbb N}
\def \ann{{\rm Ann}}
\def \<{\langle}
\def \o{\omega}
\def \O{\Omega}
\def \Or{\cal O}
\def \M{{\cal M}}
\def \1t{\frac{1}{T}}
\def \>{\rangle}
\def \t{\tau }
\def \a{\alpha }
\def \e{\epsilon }
\def \l{\lambda }
\def \L{\Lambda }
\def \g{\gamma}
\def \b{\beta }
\def \om{\omega }
\def \o{\omega }
\def \ot{\otimes}
\def \cg{\chi_g}
\def \ag{\alpha_g}
\def \ah{\alpha_h}
\def \ph{\psi_h}
\def \S{\cal S}
\def \nor{\vartriangleleft}
\def \V{V^{\natural}}
\def\voa{vertex operator algebra\ }
\def \vosa{vertex operator superalgebra\ }
\def \vosas{vertex operator superalgebras\ }
\def \voas{vertex operator algebras}
\def \v{vertex operator algebra\ }
\def \1{{\bf 1}}
\def \be{\begin{equation}\label}
\def \ee{\end{equation}}
\def \qed{\mbox{ $\square$}}
\def \pf {\noindent {\bf Proof:} \,}
\def \bl{\begin{lem}\label}
\def \el{\end{lem}}
\def \ba{\begin{array}}
\def \ea{\end{array}}
\def \bt{\begin{thm}\label}
\def \et{\end{thm}}
\def \br{\begin{rem}\label}
\def \er{\end{rem}}
\def \ed{\end{de}}
\def \bp{\begin{prop}\label}
\def \ep{\end{prop}}
\def \p{\phi}
\def \d{\delta}
\def \irr{{\rm Irr}}
\def \glob{{\rm glob}}
\def \obj{{\rm obj}}
\def \Id{{\rm Id}}
\def\OO{\mathcal{O}}
\def\EE{\mathcal{E}}
\def\Vl0{{(V^l)_{\bar 0}}}
\def\Vm0{{(V^m)_{\bar 0}}}
\def\V0{{V_{\bar 0}}}
\def\VlZ{{V(l,\BZ)}}
\def\VlZpm{{V_{\pm}(l,\BZ) }}
\def\VlhZ{{V(l,\BZ+\frac{1}{2})}}
\newtheorem{th1}{Theorem}
\newtheorem{ree}[th1]{Remark}
\newtheorem{thm}{Theorem}[section]
\newtheorem{prop}[thm]{Proposition}
\newtheorem{coro}[thm]{Corollary}
\newtheorem{lem}[thm]{Lemma}
\newtheorem{rem}[thm]{Remark}
\newtheorem{de}[thm]{Definition}
\newtheorem{hy}[thm]{Hypothesis}
\newtheorem{conj}[thm]{Conjecture}
\newtheorem{ex}[thm]{Example}
\newtheorem{q}[thm]{Question}

\newcommand\red[1]{{\color{red} #1}}
\def\b1{\mathbf{1}}
\def\BZ{\mathbb{Z}}
\def\ol{\overline}
\def\id{\operatorname{id}}
\def\sV{\operatorname{sVec}}
\def\s{\sigma}
\def\ft{\frak t}
\def\uc{\underline{c}}
\def\CC{\mathcal{C}}
\def\SC{\mathcal{SC}}
\def\uSC{\underline{\SC}}
\def\DD{\mathcal{D}}
\def\BB{\mathcal{B}}
\def\ZZ{\mathcal{Z}}
\def\CV0{\mathcal{C}_{V_{\bar 0}}}
\begin{center}
{\Large {\bf  Vertex operator superalgebras and  16-fold way}}\\
\vspace{0.5cm}

Chongying Dong\footnote
{Partially supported by China NSF grant 11871351}
\\
Department of Mathematics, University of
California, Santa Cruz, CA 95064 USA \\

Siu-Hung Ng\footnote{Partially supported by NSF grants DMS1001566, DMS1303253, and
DMS1501179}\\
Department of Mathematics
Louisiana State University
Baton Rouge, LA 70803

Li Ren\footnote{Supported by China NSF grant 11671277}\\
 School of Mathematics,  Sichuan University,
Chengdu 610064 China
\end{center}

\begin{abstract} Let  $V$ be a vertex operator superalgebra with the natural order 2 automorphism $\s$.  Under suitable conditions on $V$, the $\s$-fixed subspace $V_{\bar 0}$ is a vertex operator algebra and the category $\CV0$ of $V_{\bar 0}$-modules is modular tensor category. In this paper, we prove that $\CV0$ is a fermionic modular tensor category and the M\"uger centralizer $\CV0^0$ of the fermion in  $\CV0$ is generated by the irreducible $V_{\bar 0}$-submodules of the $V$-modules.  In particular, $\CV0^0$  is a super-modular tensor category and $\CV0$ is a minimal modular extension of $\CV0^0$. We provide a construction of a \vosa $V^l$ for each positive integer $l$ such that  $\CC_{\Vl0}$ is minimal modular extension of $\CV0^0$. We prove that these modular tensor categories $\CC_{\Vl0}$ are uniquely determined, up to equivalence, by the congruence class of  $l$ modulo 16.
\end{abstract}

\section{Introduction}

Modular (tensor) categories are mathematical formalization of topological phases of matters, which are also called topological orders \cite{W}. The 2+1D symmetry protected topological (SPT) orders are recently described by the using unitary braided fusion categories $\CC$ with the symmetry determined by their  M\"uger center $\EE$, which are symmetric fusion categories (cf. \cite{LKW1, LKW2} and the references therein). It follows from \cite{De, DR} that $\EE$ are Tannakian or super-Tannakian, i.e. $\EE$ is equivalent to the braided fusion category $\Rep(G)$ or $\Rep(G,z)$ where $G$ is a finite group uniquely determined by $\EE$ and $z$ is a central order 2 element of $G$.  Modular tensor categories are exactly those braided fusion categories with trivial M\"uger centers.     The category $\sV$ of super vector spaces over $\C$ is the \emph{smallest} super-Tannakian category. By gauging the minimal topolocial order with the fermionic symmetry \cite{Ki}, Kitaev discovered the 16-fold way: The braided fusion category $\sV$ has 16 exactly inequivalent unitary minimal modular extensions, which are unitary modular tensor categories of dimension 4 containing a full braided fusion subcategory equivalent to $\sV$.

A super-modular category  throughout this paper means a  braided fusion category over $\C$ whose M\"uger center is equivalent to $\sV$. Throughout this paper, modular or super-modular categories are assumed to be \emph{pseudounitary} and equipped with the canonical pivotal structures, i.e. the categorical (or quantum) dimension of each object is a positive number.  Motivated by the Kitaev's 16-fold way, it is conjectured in \cite{BGHNPRW} that every super-modular category $\CC$ has exactly 16 minimal modular extensions up to equivalence, i.e. pseudounitary modular categories of dimension $2\dim(\CC)$ containing a braided fusion full subcategory equivalent to $\CC$.  If $\CC$ admits a minimal modular extension, it has been proved independently in \cite{LKW1} that $\CC$ admits 16 minimal modular extensions. However, the existence of minimal modular extension for any super-modular category is still an open problem.

Rational conformal field theory is closely related to modular categories.  According to \cite{Hu1, Hu2}, the representation category of a rational $C_2$-cofinite vertex operator algebra (VOA) is modular. In fact, it is an open problem whether every modular category over $\C$ can be realized by a VOA. Super-modular categories are not modular, and so they cannot be realized as the module category of any rational VOA. One would ask what kind of rational VOA could realize a minimal modular extension of super-modular category $\CC$, and how one can obtain other VOAs whose module categories are minimal modular extensions of the $\CC$.

A vertex operator superalgebra $V=V_{\bar 0} \oplus V_{\bar 1}$ is a VOA  equipped with a $\BZ_2$-graded structure. The $\BZ_2$-grading determines a natural order 2 automorphism $\s$ of $V$ and the component $V_{\bar 0}$ is the sub VOA of $V$ fixed by $\s$. The twisted representations and orbifold theory of rational vertex operator superalgebras are well-studied in \cite{DZ1, DZ2}.   With suitable assumptions (A1 and A2 in Section \ref{s4}) on the vertex operator superalgebra $V$, the $V_{\bar 0}$-module category, denoted by $\CV0$, is a modular tensor category and $V_{\bar 1}$ is a \emph{fermion} of $V_{\bar 0}$ (cf. Lemma \ref{l9.1}). In particular, $V_{\bar 1}$ is an order 2 simple current of $V_{\bar{0}}$. We prove in Theorem \ref{th8.1} that the full subcategory $\CV0^0$ of $\CV0$, generated by the simple $V_{\bar 0}$-submodules of  $V$-modules, are closed under the tensor product of $\CV0$. In particular, $\CV0^0$ is a braided fusion subcategory of $\CV0$ with the fermion $V_{\bar 1}$. Moreover, $\CV0^0$ is the M\"uger centralizer of the fermion $V_{\bar 1}$ in $\CV0$. Hence,  $\CV0^0$ is super-modular (cf. Lemma \ref{l9.2}), and $\CV0$ is a minimal modular extension $\CV0^0$. The modular category $\CV0$ is also $\BZ_2$-graded with $\CV0= \CV0^0 \oplus \CV0^1$ where $\CV0^1$ is the full subcategory of $\CV0$ generated by the irreducible $V_{\bar 0}$-submodules of the $\s$-twisted $V$-modules, and $\dim(\CV0^1)=\dim(\CV0^0)$  (cf. Section 8).

  Since a \emph{nice} vertex operator superalgebra $V$ naturally yields a super-modular category $\CV0^0$ and a minimal modular extension $\CV0$, one would like to construct other vertex operator superalgebras from $V$ to realize the 16-fold way of the super-modular category $\CV0^0$. To the goal, we establish in  Theorem \ref{t9.6} that if $U$ is a holomorphic vertex operator superalgebra, then  $V \otimes U$ is a  vertex operator superalgebra and $\CV0^0$ equivalent to $\CC^0_{(V \ot U)_{\bar 0}}$ as braided fusion categories. In particular, $\CC_{(V \ot U)_{\bar 0}}$ is another minimal modular extension of $\CV0^0$.

  For each positive integer $l$, there is a nice holomorphic vertex operator superalgebra $V(l, \BZ+\frac{1}{2})$ (cf.  \cite{FFR}, \cite{KW}, \cite{L1}).  For any nice vertex operator superalgebra $V$, then tensor product vertex operator superalgebra $V^l:=V \ot V(l, \BZ+\frac{1}{2})$ provides the super-modular  category $\CC_{\Vl0}^0$ and its minimal modular extension $\CC_{\Vl0}$. Since $\CC_{\Vl0}^0$ is equivalent to $\CV0^0$ as braided fusion category, $\CC_{\Vl0}$ is a minimal modular extension of $\CV0^0$ for each positive integer $l$. We prove in Theorem \ref{t10.3} that $\CC_{\Vl0}$ and $\CC_{\Vm0}$ equivalent modular categories if and only if $m \equiv l \pmod{16}$ by computing their Gauss sums and applying \cite[Theorem 5.4]{LKW2}.

 The paper is organized as follows: An introduction and an overview of the results established in this paper are provided in Section 1. A review of vertex operator superalgebras and some of basic results on their representation theory are presented in Section 2. In Section 3, we introduce the tensor product vertex operator superalgebras and investigate their irreducible representations via the representations their Zhu's superalgebras. We discuss the modular invariance of the trace functions in the orbifold theory for the \vosas in Section 4.  In Section, 5, the irreducible $V_{\bar 0}$-modules of a vertex operator superalgebra $V$ are determined in terms of the irreducible $V_{\bar 0}$-submodules of $V$-modules and twisted $V$-modules.   In Section 6, we show that the associated representation of  $SL_2(\Z)$ on the trace functions in the orbifold theory for the \vosas provided in Section 4 is unitary. Some important relations between the quantum dimensions of the irreducible  $V$-modules and the irreducible $V_{\bar 0}$-modules are established in Section 7.  In Section 8, we prove that the category $\CV0$ of is $\BZ_2$-graded, where $\CV0^0$ and $\CV0^1$ are respectively generated by the irreducible $V_{\bar 0}$-submodules of $V$-modules and $\s$-twisted $V$-modules. We further prove that $\CV0^0$ is a super-modular category and $\CV0$ is a minimal modular extension of $\CV0^0$ in Section 9. In Section 10,  we construct a sequence of \vosas $V^l$ for each positive integer $l$ such that $\CC_{\Vl0}$ is a minimal modular extension of $\CV0^0$ and these modular categories  $\CC_{\Vl0}$ are uniquely determined by the congruence class of $l$ modulo 16.

\section{Preliminaries}

The various notions of twisted modules for a \vosa following \cite{DZ1}, \cite{DZ2}  are reviewed in this section. The concepts such as  rationality, regularity, and $C_2$-cofiniteness from \cite{Z} and \cite{DLM3} are discussed.

A super vector space  is a $\Bbb Z_{2}$-graded vector space
$U=U_{\bar{0}}\oplus U_{\bar{1}}$.  The vectors  in $U_{\bar{0}}$
(resp. $U_{\bar{1}}$) are called even (resp. odd). An element $u$ in $U_{\bar i}$ for some $i=0,1$ will be called $\BZ_2$-homogeneous. In this case, we define $\tilde{u} = \bar{i}$. We reserve the notation $\sV$ for the category of finite dimensional super vector spaces over $\C$ with morphisms preserving the $\BZ_2$-gradings, and equipped with the super braiding.

 If $W$ is another super vector space, then $\Hom(U,W)$ is also a super vector space in which $\Hom(U,W)_{\bar{0}}$ and $\Hom(U,W)_{\bar{1}}$ are respectively the $\Z_2$-graded preserving and reversing linear maps.

 A  \emph{vertex operator superalgebra} is a
$\frac{1}{2}\Bbb Z$-graded super vector space
\begin{equation*}
V=\bigoplus_{n\in{ \frac{1}{2}\Bbb Z}}V_n= V_{\bar{0}}\oplus V_{\bar{1}}
\end{equation*}
with  $V_{\bar{0}}=\sum_{n\in\Z}V_n$ and
$V_{\bar{1}}=\sum_{n\in\frac{1}{2}+\Z}V_n$
satisfying $\dim V_{n}< \infty$ for all $n$ and $V_m=0$ if $m$ is sufficiently
small.  $V$ is   equipped with a linear map
 \begin{align*}
& V \to (\mbox{End}\,V)[[z,z^{-1}]] ,\\
& v\mapsto Y(v,z)=\sum_{n\in{\Z}}v_nz^{-n-1}\ \ \ \  (v_n\in
(\End\, V)_{\tilde v})\nonumber
\end{align*}
and with two distinguished vectors ${\bf 1}\in V_0,$ $\omega\in
V_2$ satisfying the following conditions for $u, v \in V,$ and $m,n\in\Z:$
\begin{align*} \label{0a4}
& u_nv=0\ \ \ \ \ {\rm for}\ \  n\ \ {\rm sufficiently\ large};  \\
& Y({\bf 1},z)=Id_{V};  \\
& Y(v,z){\bf 1}\in V[[z]]\ \ \ {\rm and}\ \ \ \lim_{z\to
0}Y(v,z){\bf 1}=v;\\
& [L(m),L(n)]=(m-n)L(m+n)+\frac{1}{12}(m^3-m)\delta_{m+n,0}c ;\\
& \frac{d}{dz}Y(v,z)=Y(L(-1)v,z);\\
& L(0)|_{V_n}=n
\end{align*}
where $L(m)=\o_{ m+1}, $ that is,
$$Y(\o,z)=\sum_{n\in\Z}L(n)z^{-n-2};$$
and the {\em Jacobi identity} holds:
\begin{equation*}\label{2.8}
\begin{array}{c}
\displaystyle{z^{-1}_0\delta\left(\frac{z_1-z_2}{z_0}\right)
Y(u,z_1)Y(v,z_2)-(-1)^{\tilde{u}\tilde {v}}z^{-1}_0\delta\left(\frac{z_2-z_1}{-z_0}\right)
Y(v,z_2)Y(u,z_1)}\\
\displaystyle{=z_2^{-1}\delta
\left(\frac{z_1-z_0}{z_2}\right)
Y(Y(u,z_0)v,z_2)}
\end{array}
\end{equation*}
where
$\delta(z)=\sum_{n\in {\Bbb Z}}z^n$ and  $(z_i-z_j)^n$ is
expanded as a formal power series in $z_j$, and $u,v \in V$ are $\BZ_2$-homogeneous elements. Throughout the paper,
$z_0,z_1,z_2,$ etc. are independent commuting formal variables. A vertex operator superalgebra will be denoted by  $V=(V,Y,{\bf 1},\omega).$
In the case $V_{\bar 1}=0,$ $V$ is a vertex operator algebra given in [FLM3].

Let $V$ be a vertex operator superalgebra. There is a canonical order 2 linear automorphism $\sigma$ of $V$ associated to the structure of super vector space $V$ such that $\sigma|_{V_{\bar i}}=(-1)^i$ for $i=0,1$. It is easy to show that $\sigma\b1=\b1$, $\sigma \omega=\omega$ and $\sigma Y(v,z)\sigma ^{-1}=Y(\sigma v,z)$ for $v\in V.$ That is, $\sigma$ is an automorphism of vertex operator superalgebra $V.$

Let $g=\sigma^i$ for $i=0,1$ and $T=o(g)$. Let $V^r=\{v\in V|gv=e^{2\pi ir/T}v\}$ for $r=0,T-1.$
 A weak $g$-twisted $V$-module $M$ is a vector space equipped
with a linear map
$$\begin{array}{l}
V\to (\End\,M)[[z^{1/T}, z^{-1/T}]\\
v\mapsto\displaystyle{ Y_M(v,z)=\sum_{n\in\frac{1}{T}\Z}v_nz^{-n-1}\ \ \ (v_n\in
\End\,M)}
\end{array}$$
which satisfies that for all $0\leq r\leq T-1,$ $u\in V^r,$ $v\in V,$
$w\in M,$
\begin{eqnarray*}\label{g2.11}
& &Y_M(u,z)=\sum_{n\in \frac{r}{T}+\Z}u_nz^{-n-1} \label{1/2} ;\\
& &u_lw=0 \ \ \  				
\mbox{for}\ \ \ l>>0\label{vlw0};\\
& &Y_M(\1,z)=Id_{M};\label{vacuum}
\end{eqnarray*}
 \begin{equation*}\label{2.14}
\begin{array}{c}
\displaystyle{z^{-1}_0\delta\left(\frac{z_1-z_2}{z_0}\right)
Y_M(u,z_1)Y_M(v,z_2)-(-1)^{\tilde{u}\tilde{v}}z^{-1}_0\delta\left(\frac{z_2-z_1}{-z_0}\right)
Y_M(v,z_2)Y_M(u,z_1)}\\
\displaystyle{=z_2^{-1}\left(\frac{z_1-z_0}{z_2}\right)^{-r/T}
\delta\left(\frac{z_1-z_0}{z_2}\right)
Y_M(Y(u,z_0)v,z_2)}
\end{array}
\end{equation*}
where we assume that  $u, v$ are $\BZ_2$-homogeneous.

Let $o(g\sigma)=T'.$
An {\em admissible} $g$-twisted $V$-module
is a  weak $g$-twisted $V$-module $M$ which carries a
$\frac{1}{T'}{\Z}_{+}$-grading
\begin{equation*}\label{g2.22}
M=\oplus_{n\in\frac{1}{T'}\Z_+}M(n)
\end{equation*}
satisfying
\begin{eqnarray*}\label{g2.23}
v_{m}M(n)\subseteq M(n+\wt v-m-1)
\end{eqnarray*}
for homogeneous $v\in V.$

 An (ordinary) $g$-{\em twisted $V$-module} is
a weak $g$-twisted $V$-module
\begin{equation*}\label{g2.21}
M=\bigoplus_{\lambda \in{\C}}M_{\lambda}
\end{equation*}
such that $\dim M_{\l}$ is finite and for fixed $\l,$ $M_{n+\l}=0$
for all small enough integers $n$ where
 $M_{\l}=\{w\in M|L(0)w=\l w\}.$
We will write $\wt w=\lambda$ if $w\in M_{\lambda}.$

If $M=\bigoplus_{n\in \frac{1}{T'}\Z_+}M(n)$
is an admissible $g$-twisted $V$-module, the contragredient module $M'$
is defined as follows:
\begin{equation*}
M'=\bigoplus_{n\in \frac{1}{T'} \Z_+}M(n)^{*},
\end{equation*}
where $M(n)^*=\Hom_{\C}(M(n),\C).$ The vertex operator
$Y_{M'}(a,z)$ is defined for $a\in V$ via
\begin{eqnarray*}
\langle Y_{M'}(a,z)f,w\rangle= \langle f,Y_M(e^{zL(1)}(e^{\pi i}z^{-2})^{L(0)}a,z^{-1})w\rangle,
\end{eqnarray*}
where $\langle f,w\rangle=f(w)$ is the natural paring $M'\times M\to \C.$
It follows from \cite{FHL} and \cite{X} that $(M',Y_{M'})$ is an admissible $g$-twisted $V$-module.
We can also define the contragredient module $M'$ for a $g$-twisted $V$-module $M.$ In this case,
$M'$ is also a $g$-twisted $V$-module. Moreover, $M$ is irreducible if and only if $M'$ is irreducible.

A \vosa $V$ is called \emph{$g$-rational}, if the category of its admissible $g$-twisted modules is semisimple.  We simply call $V$ rational if $V$ is $1$-rational. $V$ is called holomorphic if $V$ is rational and $V$ is the only irreducible
module for itself up to isomorphism.

We also need  another important concept called $C_2$-cofiniteness \cite{Z}.
We say that a \vosa $V$ is $C_2$-cofinite if $V/C_2(V)$ is finite dimensional, where $C_2(V)=\langle v_{-2}u|v,u\in V\rangle.$  A \vosa $V$ is called regular if every weak $V$-module is a direct sum of irreducible $V$-modules.

The following results about $\sigma^i$-rational  are given in \cite{DZ1} and \cite{DZ2}. Also see \cite{DLM4} and \cite{DLM7}.
\begin{thm}\label{grational}
Let $V$ be a $g$-rational vertex operator superalgebra where $g=\sigma^i$ and $i =0,1$. Then:

(1) Any irreducible admissible $g$-twisted $V$-module $M$ is an ordinary $g$-twisted $V$-module. Moreover, there exists a number $\l \in \mathbb{C}$ such that  $M=\oplus_{n\in \frac{1}{T'}\mathbb{Z_+}}M_{\l +n}$ where $M_{\lambda}\neq 0.$ The $\l$ is called the conformal weight of $M$.

(2) There are only finitely many irreducible admissible  $g$-twisted $V$-modules up to isomorphism.

(3) If $V$ is also $C_2$-cofinite and $\sigma^i$-rational for $i=0,1$ then the central charge $c$ and the conformal weight $\l$ of any irreducible $\sigma^i$-twisted $V$-module $M$ are rational numbers.
\end{thm}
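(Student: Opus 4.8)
The plan is to reduce statements (1) and (2) to the structure theory of the twisted Zhu algebra and to derive (3) from orbifold modular invariance. For (1) and (2), I would work with the associative algebra $A_g(V)$ (the $g$-twisted Zhu algebra, in its $\BZ_2$-graded form adapted to the super setting) that governs the lowest graded piece of an admissible $g$-twisted module. Two inputs are needed, both the twisted/super analogues of Zhu's theorem and of Dong--Li--Mason available in the literature: (a) if $V$ is $g$-rational then $A_g(V)$ is a finite-dimensional semisimple associative algebra; and (b) the assignment $M \mapsto M(0)$, sending an admissible $g$-twisted module to its bottom graded component equipped with the zero-mode action $o(v) = v_{\mathrm{wt}\,v - 1}$, induces a bijection between isomorphism classes of irreducible admissible $g$-twisted $V$-modules and of irreducible $A_g(V)$-modules. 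Granting (a) and (b), statement (2) is immediate, since a finite-dimensional semisimple algebra admits only finitely many irreducible modules up to isomorphism.

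For (1), let $M$ be irreducible admissible. By (b), $M(0)$ is an irreducible $A_g(V)$-module, hence finite-dimensional by (a). The operator identity $[L(0), v_m] = (\mathrm{wt}\,v - m - 1)v_m$ gives $[L(0), o(v)] = 0$ for every $v$, so $L(0)$ is an endomorphism of the irreducible $A_g(V)$-module $M(0)$; Schur's lemma forces $L(0)|_{M(0)} = \lambda\cdot\mathrm{id}$ for some $\lambda \in \mathbb{C}$. Since $M$ is generated by $M(0)$ under the modes and $v_m$ sends $M(n)$ into $M(n + \mathrm{wt}\,v - m - 1)$, the same bracket propagates the eigenvalue: if $w$ has $L(0)$-eigenvalue $\mu$ then $v_m w$ has eigenvalue $\mu + \mathrm{wt}\,v - m - 1$. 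Hence $L(0)$ acts on $M(n)$ as $\lambda + n$, and together with finiteness of each graded piece (which follows from generation by the finite-dimensional $M(0)$ via the standard spanning arguments) this exhibits $M = \bigoplus_n M_{\lambda + n}$ as an ordinary $g$-twisted module with $M_\lambda \neq 0$.

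For (3), the extra hypotheses that $V$ is $C_2$-cofinite and $\sigma^i$-rational for both $i = 0,1$ are exactly what is required for the orbifold modular invariance of Section 4, the super/twisted generalization of Zhu's theorem. Under these hypotheses the trace functions $Z_M(\tau) = \operatorname{tr}_M q^{L(0) - c/24}$, ranging over the irreducible untwisted and $\sigma$-twisted modules, span a finite-dimensional space carrying a representation of $SL_2(\mathbb{Z})$ in which the two twist sectors are mixed by $S$ and $T$; here $T\colon \tau \mapsto \tau + 1$ acts along the character of $M$ by the scalar $e^{2\pi i(\lambda_M - c/24)}$. The final step is the Anderson--Moore--type rationality argument: the $SL_2(\mathbb{Z})$-representation on the characters has the congruence property, so $T$ has finite order and each eigenvalue $e^{2\pi i(\lambda_M - c/24)}$ is a root of unity, giving $\lambda_M - c/24 \in \mathbb{Q}$. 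Applying this to the vacuum module extracts $c \in \mathbb{Q}$, whence each conformal weight $\lambda_M \in \mathbb{Q}$.

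The two genuinely hard inputs are the finite-dimensional semisimplicity of $A_g(V)$ under $g$-rationality in (a), proved via associativity of the Zhu product and an analysis of the radical, and, for (3), the finiteness of the order of the $T$-action, i.e.\ the congruence/Galois property of the modular representation. I expect the latter to be the main obstacle: it is the deepest point, and I would obtain it from the established orbifold modular invariance for vertex operator superalgebras rather than reprove it. This is precisely why the statement is quoted from the cited references.
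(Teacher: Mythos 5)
The paper does not prove this theorem at all: it is quoted verbatim from the cited references (\cite{DZ1}, \cite{DZ2}; see also \cite{DLM4}, \cite{DLM7}), so there is no in-paper argument to compare against. Your reconstruction is nevertheless the standard one and matches what those references actually do. For (1) and (2) the route through the twisted Zhu algebra $A_g(V)$ --- semisimplicity under $g$-rationality, the bijection $M\mapsto M(0)$, Schur's lemma forcing $L(0)$ to act as a scalar $\lambda$ on the top level, and the bracket $[L(0),v_m]=(\wt v-m-1)v_m$ propagating the eigenvalue to give $M=\oplus_n M_{\lambda+n}$ --- is exactly the machinery the paper itself sets up later in Theorem \ref{dz1} for the $\sigma$-twisted case, and the finite-dimensionality of each $M(n)$ does follow from the spanning-set argument you allude to (each $M(n)$ is spanned by $v_{\wt v-1-n}M(0)$).

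The one point I would push back on is your justification of (3). You derive finiteness of the order of $\rho(T)$ from ``the congruence/Galois property of the modular representation,'' but in the literature (e.g.\ \cite{DLN}) the congruence property is proved \emph{using} the rationality of $c$ and of the conformal weights as an input, so invoking it here is circular. The mechanism actually used in \cite{DLM7} (going back to Anderson--Moore) is more elementary: the space of trace functions is finite-dimensional and $T$-stable, the translates $Z_M(\tau+k)$ have leading terms $e^{2\pi ik(\lambda-c/24)}q^{\lambda-c/24}$, and a linear-independence (Vandermonde-type) argument on $q$-expansions with distinct leading exponents forces $\lambda-c/24\in\Q$; applying this to $V$ itself gives $c\in\Q$ and hence each $\lambda\in\Q$. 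Since you explicitly intend to cite rather than reprove this step, the slip is one of attribution rather than a fatal gap, but the dependency should run in the direction just described.
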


A \vosa $V=\oplus_{n\in \frac{1}{2}\Z}V_n$  is said to be of \emph{CFT type} if $V_n=0$ for negative
$n$ and $V_0=\C {\bf 1}.$
We know from  \cite{L3} and \cite{ABD} that if  $V$ is a vertex operator algebra  of CFT type, then regularity is equivalent to rationality and $C_2$-cofiniteness.  Moreover, $V$ is regular if and only if the weak module category is semisimple \cite{DYu}. The same results also hold for vertex operator superalgebras with similar  proof \cite{HA}.

We discuss more on $V$-modules.  Let $M=\oplus_{n\in\frac{1}{2}\Z_+}M(n)$ be an admissible $V$-module. We set
$M_{\bar 0}=\oplus_{n\in\Z_+}M(n)$ and $M_{\bar 1}=\oplus_{n\in\Z_+}M(n+\frac{1}{2}).$  From now on we assume that $V$ is a simple \vosa and $V_{\bar 1}\ne 0.$ Then $V_{\bar 0}$ is a simple \voa and $V_{\bar 1}$ is an irreducible $V_{\bar 0}$-module.

\begin{lem}\label{l2.1} Let $M=(M,Y_M)$ be a nonzero  admissible  $V$-module. Then $M_{\bar i}\ne 0$ for $i=0,1.$ Moreover,
we can define $\sigma$ action on $M$ such that $\sigma|_{M_{\bar i}}=(-1)^i$ and $\sigma Y_M(u,z)\sigma^{-1}=Y_M(\sigma u,z)$ for all $u\in V.$
\end{lem}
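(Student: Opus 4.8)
The plan is to exploit two ingredients: how the modes of $\BZ_2$-homogeneous elements of $V$ interact with the $\frac{1}{2}\Z_+$-grading of $M$, and the simplicity of $V$.

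First I would record the elementary consequence of the admissibility axiom $v_m M(n) \subseteq M(n+\wt v - m -1)$. Since every even $v\in V_{\bar 0}$ has integer weight while every odd $v\in V_{\bar 1}$ has weight in $\frac{1}{2}+\Z$, the modes of even elements preserve the decomposition $M=M_{\bar 0}\oplus M_{\bar 1}$, whereas the modes of odd elements interchange $M_{\bar 0}$ and $M_{\bar 1}$. This single observation drives both assertions.

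The nonvanishing claim is the main step, and I would prove it by contradiction using the annihilator $I=\{a\in V: Y_M(a,z)=0\}$. Suppose $M_{\bar 1}=0$ (the case $M_{\bar 0}=0$ is identical after swapping indices). Then $M=M_{\bar 0}$, and by the observation above $Y_M(a,z)M\subseteq M_{\bar 1}=0$ for every odd $a$, so $V_{\bar 1}\subseteq I$. The key point is that $I$ is an ideal of $V$: by the iterate (associativity) formula both $Y_M(a_n b,z)$ and $Y_M(b_n a,z)$ are residues of products of vertex operators one of whose factors is $Y_M(a,\cdot)$, so $a\in I$ forces $a_n b, b_n a\in I$ for all $b\in V$ and $n\in\Z$. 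Now $Y_M(\mathbf{1},z)=\Id_M\neq 0$ gives $\mathbf{1}\notin I$, so $I\neq V$; and $I\supseteq V_{\bar 1}\neq 0$, so $I\neq 0$. This contradicts the simplicity of $V$. Hence $M_{\bar 1}\neq 0$, and symmetrically $M_{\bar 0}\neq 0$.

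For the second assertion I would define $\sigma$ on $M$ by $\sigma|_{M_{\bar i}}=(-1)^i$, which is a well-defined involution since $M=M_{\bar 0}\oplus M_{\bar 1}$. The intertwining identity $\sigma Y_M(u,z)\sigma^{-1}=Y_M(\sigma u,z)$ then follows by checking it on $\BZ_2$-homogeneous $u$ and on $M_{\bar 0}$, $M_{\bar 1}$ separately. For even $u$ the operator $Y_M(u,z)$ preserves each $M_{\bar i}$, so conjugation by $\sigma$ is trivial and $\sigma Y_M(u,z)\sigma^{-1}=Y_M(u,z)=Y_M(\sigma u,z)$; for odd $u$ the operator $Y_M(u,z)$ reverses the grading, so conjugation by $\sigma$ produces a single sign $-1$, giving $\sigma Y_M(u,z)\sigma^{-1}=-Y_M(u,z)=Y_M(\sigma u,z)$. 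Extending linearly in $u$ finishes the proof. The genuine obstacle is the nonvanishing of both graded pieces: phrasing it through the annihilator ideal makes it clean, but it truly uses simplicity of $V$, since a non-simple $V$ could admit modules concentrated in a single parity; the compatibility relation for $\sigma$ is then routine parity bookkeeping.
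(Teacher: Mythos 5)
Your proof is correct and follows essentially the same route as the paper: both arguments rest on the observation that modes of odd elements shift the parity of the admissible grading of $M$, combined with the simplicity of $V$ to rule out $V_{\bar 1}$ acting as zero, and the verification of the intertwining identity for $\sigma$ is the same routine parity bookkeeping. The only real difference is that the paper invokes Proposition 11.9 of \cite{DL1} (for simple $V$, $u_nM_{\bar 0}\ne 0$ for some $n$ whenever $u\ne 0$) as a black box, whereas you derive the weaker but sufficient statement from scratch by showing the annihilator $\{a\in V: Y_M(a,z)=0\}$ is a proper ideal containing $V_{\bar 1}$ --- a slightly more self-contained version of the same idea.
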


\pf We can assume $M_{\bar 0}\ne 0.$ Then for any $u\in V_{\bar i}$ and $n\in \Z,$ $u_nM_{\bar 0}\in M_{\bar i}.$ Moreover,
we know if $u\ne 0$ then  $u_nM_{\bar 0}\ne 0$ by Proposition 11.9 of \cite{DL1}. This implies that $M_{\bar i}\ne 0$ for
all $i.$ If $u \in V_{\bar{i}}$, it is easy to see that $u_n \in (\End M)_{\bar{i}}$. Therefore the last statement of the Lemma is clear. \qed

Recall from \cite{DLM7} that $M$ is called $\sigma$-stable if $M\circ \sigma$ and $M$ are isomorphic where
$M\circ\sigma$ is a $V$-module such that $M\circ\sigma=M$ as vector spaces and $Y_{M\circ \sigma}(v,z)=Y_M(\sigma v,z)$ for all $v\in V.$ Lemma \ref{l2.1} asserts that for any admissible $V$-module $M,$ $M\circ \sigma$ and $M$ are isomorphic, or $M$ is $\s$-stable.

We now turn our attention to $\sigma$-twisted $V$-module. In this case, an admissible $\sigma$-twisted module $M$ has gradation $M=\oplus_{n\in\Z_+}M(n).$ So we can not use gradation to divide $M$ into even and odd parts. In this case, we have to use $M\circ \sigma.$

\begin{lem}\label{l2.2} Suppose $M$ is an irreducible admissible $\sigma$-twisted $V$-module. If $M\circ \sigma $ and $M$ are not isomorphic, then $M$ is an irreducible
$V_{\bar 0}$-module. If $M\circ \sigma $ and $M$ are isomorphic, then $M$ is a direct sum of
two inequivalent irreducible $V_{\bar 0}$-modules.  In this case, there exists an involution $\sigma  \in {\rm GL}(M)$ such that
$\sigma Y_M(v,z)\sigma^{-1}=Y_M(\sigma v,z)$ for $v\in V$ and the two irreducible $V_{\bar 0}$-modules are the two different eigenspaces of $\sigma$.
\end{lem}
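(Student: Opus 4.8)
The plan is to treat $V_{\bar 0}\subseteq V$ as an ``index two'' inclusion and run a Clifford-theoretic argument, viewing $M$ as an ordinary (untwisted) $V_{\bar 0}$-module via the restriction of $Y_M$ to $V_{\bar 0}$, which involves only integral powers of $z$. First I would fix an irreducible $V_{\bar 0}$-submodule $W\subseteq M$, which exists because the $V_{\bar 0}$-module category is semisimple, and set $W' = \operatorname{span}\{u_n w : u\in V_{\bar 1},\ n\in\tfrac12+\BZ,\ w\in W\}$. Using the Jacobi identity one checks that $W'$ is again a $V_{\bar 0}$-submodule and that $W+W'$ is stable under all of $V$; the only nonformal point is $V_{\bar 1}\cdot W'\subseteq W$, which follows from the iterate formula since products of two odd fields lie in $V_{\bar 0}$. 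As $M$ is irreducible, $M = W+W'$. Because $V_{\bar 1}$ is an order two simple current of $V_{\bar 0}$, the odd fields define a nonzero intertwining operator of type $\binom{W'}{V_{\bar 1}\ W}$, so $W'\cong V_{\bar 1}\boxtimes W$ is irreducible; it is nonzero by the nonvanishing of the modes $u_n$ on $M$ (Proposition 11.9 of \cite{DL1}, exactly as in Lemma \ref{l2.1}). Hence, as a $V_{\bar 0}$-module, $M$ is either irreducible (precisely when $W'=W$) or a direct sum $W\oplus W'$ of two irreducibles.

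Next I would match these two cases with the isomorphism class of $M\circ\sigma$. If $M = W\oplus W'$ splits, define $\phi\in\operatorname{GL}(M)$ by $\phi|_W = \id$ and $\phi|_{W'} = -\id$; since $V_{\bar 0}$ preserves each summand while the odd fields interchange them, a direct check gives $\phi\,Y_M(v,z) = Y_M(\sigma v,z)\,\phi$ for all $v\in V$, so $\phi\colon M\to M\circ\sigma$ is an isomorphism. Conversely, suppose $M\cong M\circ\sigma$ via some $\phi$; then $\phi$ commutes with the $V_{\bar 0}$-action and anticommutes with every odd field. Were $M$ irreducible over $V_{\bar 0}$, Schur's lemma would force $\phi$ to be a nonzero scalar, which cannot anticommute with the nonzero odd fields. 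Hence $M$ splits. This yields the equivalence $M\circ\sigma\not\cong M \iff M$ is an irreducible $V_{\bar 0}$-module, which is the first assertion together with the splitting in the second.

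In the split case I would then produce the involution. Given an isomorphism $\phi\colon M\to M\circ\sigma$, the square $\phi^2$ commutes with all of $V$, so $\phi^2$ is a scalar by Schur's lemma for the irreducible $\sigma$-twisted module $M$; rescaling $\phi$ by a square root of that scalar gives an involution, which I rename $\sigma\in\operatorname{GL}(M)$, satisfying $\sigma^2=\id$ and $\sigma Y_M(v,z)\sigma^{-1}=Y_M(\sigma v,z)$ (the even and odd instances of this identity are precisely that $\sigma$ commutes with $V_{\bar 0}$ and anticommutes with $V_{\bar 1}$). Its $\pm1$-eigenspaces are $V_{\bar 0}$-submodules interchanged by the odd fields, and comparing with $M=W\oplus W'$ identifies them with the two irreducible constituents.

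The remaining, and genuinely delicate, point is that $W$ and $W'$ are \emph{inequivalent}, and this is where I expect the real work to lie: the easy invariants (central charge, conformal weight modulo $1$) coincide for $W$ and $W'=V_{\bar 1}\boxtimes W$, so the distinction must be categorical. The clean route is the index two Clifford dichotomy. If $W\cong W'$, then the compatible involution $\sigma$ lets one build a $\sigma$-twisted $V$-module structure on a single copy of $W$ using the odd intertwining operators; the obstruction to such an extension lives in $H^2(\BZ_2,\C^\times)=0$ and hence vanishes. That extension is already irreducible over $V_{\bar 0}$ and therefore over $V$, and by a Frobenius-reciprocity comparison the irreducible $M$ would have to coincide with it, forcing $M|_{V_{\bar 0}}$ to be the irreducible $W$ and contradicting $M|_{V_{\bar 0}}=W\oplus W'$. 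Equivalently, $W\cong W'$ would make $\End_{V_{\bar 0}}(M)\cong M_2(\C)$, and one can track how the odd fields interact with $\sigma$ inside this algebra; both formulations isolate the same phenomenon, the absence of a cohomological obstruction for the cyclic quotient $\BZ_2$. I would carry out the first, making the extension of the $V_{\bar 0}$-module $W$ to a $\sigma$-twisted $V$-module explicit through the odd intertwining operators and then reading off the contradiction with the reducibility of $M|_{V_{\bar 0}}$.
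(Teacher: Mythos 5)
Your first three paragraphs are sound and in fact supply details the paper leaves implicit: the identification of $M=W+W'$ via the single-mode generation argument, the clean equivalence ``$M\circ\sigma\cong M$ iff $M|_{V_{\bar 0}}$ splits'' (the paper only cites the proof of Theorem 6.1 of \cite{DM} for the unstable case and uses Schur's lemma for the stable one), and the normalization $\sigma^2=\id$. Two caveats there: you should not invoke semisimplicity of the $V_{\bar 0}$-module category or the simple-current property of $V_{\bar 1}$, since the lemma sits in Section 2 before the rationality assumptions A1--A2 are imposed, and the simple-current property is only established later (via quantum dimensions) \emph{downstream} of this very lemma, so using it here is circular; irreducibility of $W'$ has to be extracted directly from the generation argument plus Proposition 11.9 of \cite{DL1}, which it can be.

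The genuine gap is exactly where you locate it: the inequivalence of $W$ and $W'$. Your proposed closure does not close. Building a $\sigma$-twisted $V$-module structure on a single copy of $W$ from an isomorphism $\psi\colon W\to W'$ (the vanishing of the $H^2(\BZ_2,\C^\times)$ obstruction) produces \emph{some} irreducible twisted module $\widetilde W$, but nothing identifies $\widetilde W$ with $M$: the inclusion $W\hookrightarrow M$ is only a $V_{\bar 0}$-map, and the ``Frobenius-reciprocity comparison'' you invoke presupposes an induction functor and adjunction for the pair $V_{\bar 0}\subset V$ on twisted modules. Likewise, your alternative formulation needs the inequality $\dim\End_{V_{\bar 0}}(M)\le 2$ to rule out $\End_{V_{\bar 0}}(M)\cong M_2(\C)$, and that double-commutant statement (that the commutant of $V_{\bar 0}$ in $M$ is exactly the group algebra $\C[\BZ_2]$) is the substantive content of the quantum Galois duality theorem of \cite{DM}, \cite{DY} --- it rests on a density theorem for the action of $V$ on an irreducible twisted module, not on formal cohomological vanishing. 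This is precisely what the paper outsources by citing \cite{DM}; without importing that duality theorem (or an equivalent $A_{\sigma,n}(V)$-style density argument), your fourth paragraph is a restatement of the problem rather than a proof.
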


\pf If $M\circ\sigma$ and $M$ are not isomorphic, it follows from the proof of Theorem 6.1 of \cite{DM} that
$M\circ \sigma$ and $M$ are isomorphic irreducible $V_{\bar 0}$-module. If $M\circ \sigma $ is isomorphic to $M,$ we also denote this isomorphism by
$\sigma$ without confusion. Then  $\sigma: M\to M$ is a linear isomorphism such that $\sigma Y_M(v,z)\sigma^{-1}=Y_M(\sigma v,z)$ for $v$ in $ V$ by Schur's Lemma. We can  choose $\sigma$ such that $\sigma^2=1.$ We denote the eigenspace with eigenvalue $(-1)^i$ by $M_{\bar i}.$ Then $M_{\bar i}$ is irreducible $V_{\bar 0}$-module.
\qed

We now introduce the notion of an admissible $\sigma$-twisted super $V$-module.  An admissible $\sigma$-twisted $V$-module $M$ is called an \emph{admissible  $\sigma$-twisted super $V$-module} if $M$ is $\sigma$-stable.
The ordinary $\sigma$-twisted super $V$-module can be defined similarly.
\begin{lem}\label{supert} If $N$ is an admissible $\sigma$-twisted
$V$-module which is not a $\sigma$-stable, then $N\oplus N\circ\sigma$ is an admissible $\sigma$-twisted super $V$-module. Moreover, $N$ is irreducible
if and only if $N\oplus N\circ \sigma$ is an irreducible  admissible $\sigma$-twisted super $V$-module.
\end{lem}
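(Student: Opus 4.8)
The plan is to verify the two assertions separately, beginning with the claim that $N \oplus N\circ\sigma$ is $\sigma$-stable, hence an admissible $\sigma$-twisted super $V$-module, and then establishing the stated equivalence of irreducibilities. First I would recall that $N\circ\sigma$ is the $\sigma$-twisted $V$-module with the same underlying space as $N$ but with vertex operator $Y_{N\circ\sigma}(v,z)=Y_N(\sigma v,z)$, and that by hypothesis $N$ is \emph{not} $\sigma$-stable, i.e. $N\circ\sigma \not\cong N$ as $\sigma$-twisted $V$-modules. To show $M:=N\oplus N\circ\sigma$ is $\sigma$-stable, I would exhibit an explicit $V$-module isomorphism $M\circ\sigma \to M$. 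Writing elements of $M$ as pairs and using $(N\circ\sigma)\circ\sigma \cong N$ (since $\sigma^2=1$ on $V$), the swap map $(x,y)\mapsto(y,x)$ intertwines the action twisted by $\sigma$ with the original action, giving $M\circ\sigma\cong M$. This produces the required $\sigma$ action on $M$ satisfying $\sigma Y_M(v,z)\sigma^{-1}=Y_M(\sigma v,z)$, so $M$ is an admissible $\sigma$-twisted super $V$-module.

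Next I would handle the irreducibility equivalence. For the forward direction, suppose $N$ is irreducible as a $\sigma$-twisted $V$-module but assume for contradiction that $M=N\oplus N\circ\sigma$ is \emph{not} an irreducible admissible $\sigma$-twisted super $V$-module. A proper nonzero super-submodule $W\subseteq M$ must in particular be a $V$-submodule, so by irreducibility of the two summands $N$ and $N\circ\sigma$, the submodule $W$ is a sum of some of these summands or, if $N\cong N\circ\sigma$ were to hold, a diagonal copy; but the latter is excluded by hypothesis, and a single summand $N$ or $N\circ\sigma$ is not stable under the swap $\sigma$, hence not a super-submodule. This forces $W=M$, a contradiction. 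For the converse, if $N$ were reducible with proper nonzero $\sigma$-twisted $V$-submodule $N_1\subsetneq N$, then $N_1\oplus N_1\circ\sigma$ would be a proper nonzero $\sigma$-stable submodule of $M$, so $M$ would be reducible as a super $V$-module.

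The main subtlety I anticipate is \emph{not} in the module-theoretic bookkeeping but in correctly tracking the twisting convention through the direct sum: one must check that the grading and the twisted Jacobi identity for $M$ are compatible with the swap map that realizes $\sigma$-stability, and that a super-submodule (a submodule closed under the $\sigma$ action) of $M$ is genuinely constrained as claimed. In particular, the key structural input is that $N\not\cong N\circ\sigma$ guarantees $\Hom_V(N, N\circ\sigma)=0$ by Schur's lemma, which rules out any off-diagonal intertwiner and thereby pins down the complete list of $V$-submodules of $M$ as exactly $0,\ N,\ N\circ\sigma,\ M$. Verifying that among these only $0$ and $M$ are $\sigma$-stable is then immediate and completes both directions.
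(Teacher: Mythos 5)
Your proposal is correct. The first half — exhibiting the swap $(x,y)\mapsto(y,x)$ as an isomorphism $M\circ\sigma\cong M$ for $M=N\oplus N\circ\sigma$, which simultaneously provides the involution $\sigma$ on $M$ — is exactly the paper's argument, and your converse direction (a proper nonzero submodule $Z\subsetneq N$ yields the proper $\sigma$-stable submodule $Z\oplus\sigma(Z)$) also coincides with the paper's. Where you genuinely diverge is the forward direction of the irreducibility equivalence. You determine the full lattice of $\sigma$-twisted $V$-submodules of $M$: since $N$ and $N\circ\sigma$ are irreducible and, by hypothesis, non-isomorphic, semisimplicity plus Schur's lemma ($\Hom_V(N,N\circ\sigma)=0$ rules out diagonal copies) gives exactly $0$, $N$, $N\circ\sigma$, $M$, and only the first and last are stable under the swap. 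The paper instead decomposes a super-submodule $X$ into the $\pm 1$-eigenspaces $X_{\bar 0}\oplus X_{\bar 1}$ of $\sigma$, invokes Lemma \ref{l2.2} to see that the diagonal and antidiagonal subspaces $M_{\bar 0}=\{w+\sigma w\}$ and $M_{\bar 1}=\{w-\sigma w\}$ are irreducible $V_{\bar 0}$-modules, and then uses simplicity of $V$ together with Proposition 11.9 of \cite{DL1} to propagate from $X_{\bar 0}=M_{\bar 0}$ to $X_{\bar 1}=M_{\bar 1}$. Your route is more elementary — it stays entirely at the level of $\sigma$-twisted $V$-modules and needs neither Lemma \ref{l2.2} nor the non-vanishing input from \cite{DL1} — while the paper's eigenspace computation has the side benefit of making the even and odd parts $M_{\bar r}$ and their $V_{\bar 0}$-module structure explicit, which is what gets used in later sections. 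Both arguments are complete.
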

\begin{proof}  For short, we set $\ol N=N\circ \sigma$ and $M=N\oplus \ol N.$ Since $N = \ol N$ as vector spaces, we can define a linear isomorphism $\sigma: M \to M$ by $\sigma(w, w') = (w', w)$ for any $w, w'\in N$. Obviously, $\sigma^2=\id_M$ and one can verify directly that  $\sigma Y_M(u,z)\sigma=Y_M(\sigma u,z)$ for $u\in V.$ Therefore, $M \circ \sigma \cong M$ and  $M$ is an admissible $\sigma$-twisted super $V$-module with
$$M_{\bar 0}=\{w+\sigma w|w\in M\},\ \ M_{\bar 1}=\{w-\sigma w|w\in M\}.$$
Note that $M_{\bar r}$ and $N$ are isomorphic $V_{\bar 0}$-modules for $r=0,1.$ If $N$ is irreducible, then $M_{\ol{0}}$ and $M_{\ol{1}}$  are irreducible $V_{\ol 0}$-modules by Lemma \ref{l2.2}. Let $X\subset M$ be a nonzero  admissible $\sigma$-twisted super $V$-submodule. Then $X=X_{\bar 0}+X_{\bar 1}.$ Without loss, we can assume that $X_{\bar 0}$ is nonzero. Then $X_{\bar 0}$ is a submodule of the irreducible $V_{\bar 0}$-module $M_{\bar 0}.$ Thus $X_{\bar 0}=M_{\bar 0}.$ Since $V$ is simple, for any nonzero $u\in V_{\bar 1}$ and
any nonzero $w\in M_{\bar 0}$ we know $Y(u,z)w$ is nonzero by Proposition 11.9 of \cite{DL1}. This implies $X_{\bar 1}$ is nonzero and equal to $M_{\bar 1}.$ So $X$ has to be $M$ and hence $M$ is an irreducible super $V$-module.
Conversely, if $M$ is super irreducible, take a nonzero  proper admissible $\sigma$-twisted submodule $Z$ of $N.$ It is easy to see that $Z+\sigma(Z)$ is a nonzero proper  admissible $\sigma$-twisted super module. This is a contradiction. The proof is complete.
\end{proof}

\section{Tensor products}

For the purpose later we need to investigate the tensor product $U\otimes V$ of two vertex operator superalgebras $U$ and $V$ and its twisted modules. The tensor product of vertex operator algebras and their modules were studied in \cite{FHL}. In the super case, the tensor product is more complicated. For example, the tensor product $M\otimes N$ of a $\sigma_U$-twisted $U$-module $M$ and a $\sigma_V$-twisted $V$-module $N$ may not be a $\sigma_{U\otimes V}$-twisted $U\otimes V$-module where $\sigma_U$ is the $\sigma$ on $U.$ We will use $\sigma$ for any vertex operator superalgebra if there is no confusion.  So it is necessary to have a detail discussion.

\begin{lem}\label{tensor1}
Let $U,V$ be vertex operator superalgebras. Then

(1) $U\otimes V$ is also a vertex operator superalgebra with
$$(U\otimes V)_{\bar 0}=U_{\bar 0}\otimes V_{\bar 0}+U_{\bar 1}\otimes V_{\bar 1},\ \ (U\otimes V)_{\bar 1}=U_{\bar 0}\otimes V_{\bar 1}+U_{\bar 1}\otimes V_{\bar 0}$$
and $$Y(u\otimes v,z)(u'\otimes v')=(-1)^{\tilde{v}\tilde{u'}}Y(u,z)u'\otimes Y(v,z)v'$$
for any $\BZ_2$-homogeneous elements $u,u'\in U$ and $v,v'\in V.$

(2) The map $f: U\otimes V\to V\otimes U$ such that $f(u\otimes v)=(-1)^{\tilde{u}\tilde{v}}v\otimes u$ gives an isomorphism of vertex operator superalgebras.

(3) If $M$ is a $\sigma^i$-twisted $U$-module such that $M\circ \sigma^i\cong M$ and $N$ is $\sigma^i$-twisted $V$-module with $i=0,1.$ Then $M\otimes N$ is a $\sigma^i\otimes \sigma^i$-twisted $U\otimes V$-module such that
$$Y(u\otimes v,z)(x\otimes y)=(-1)^{\tilde{v}\tilde{x}}Y(u,z)x\otimes Y(v,z)y$$
$u\in U, $ $v\in V$ and $x\in M$ and $y\in N$ where as usual $\tilde{x}=r$ if $x\in M_{\bar r}.$
In particular, the tensor product $M\otimes N$ of $U$-module $M$ and $V$-module $N$ is a module for $U\otimes V.$

(4) If both $U$ and $V$ are rational, then any irreducible $U\otimes V$-module is isomorphic to $M\otimes N$ for some irreducible $U$-module $M$ and some irreducible $V$-module $N.$

(5) If $M$ is a $\sigma$-twisted super $U$-module and $N$ is a  $\sigma$-twisted super $V$-module then $M\otimes N$ is a  $\sigma\otimes \sigma$-twisted super $U\otimes V$-module with
$$(M\otimes N)_{\bar 0}=M_{\bar 0}\otimes N_{\bar 0}+M_{\bar 1}\otimes N_{\bar 1},\ \ (M\otimes N)_{\bar 1}=M_{\bar 0}\otimes N_{\bar 1}+M_{\bar 1}\otimes N_{\bar 0}.$$
\end{lem}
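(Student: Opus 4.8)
The plan is to treat the five assertions in order; (1)--(3) and (5) are sign-tracking applications of the Jacobi identities already available in $U$ and $V$, while (4) is the one genuinely representation-theoretic step. For (1), I would first fix the linear data: the vacuum $\1_U\ot\1_V$, the conformal vector $\omega_U\ot\1_V+\1_U\ot\omega_V$ (so the central charge is $c_U+c_V$), the grading $(U\ot V)_n=\sum_{p+q=n}U_p\ot V_q$, and the stated $\BZ_2$-grading, then check that the latter is exactly the parity read off from the $\frac12\BZ$-grading. The substance is the super Jacobi identity for $Y(u\ot v,z)$: substituting the defining formula $Y(u\ot v,z)(u'\ot v')=(-1)^{\tilde v\tilde u'}Y(u,z)u'\ot Y(v,z)v'$ into the three terms and expanding each product of vertex operators, the identity reduces to the Jacobi identities of $U$ and $V$ computed factorwise. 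The only care needed is bookkeeping of Koszul signs -- each interchange of an odd $V$-entry past an odd $U$-entry yields $-1$ -- and confirming that the accumulated sign on the commutator term matches $(-1)^{\tilde a\tilde b}$ with $\tilde a=\tilde u+\tilde v$ and $\tilde b=\tilde u'+\tilde v'$, so the middle term carries the correct global sign; the remaining axioms are inherited termwise. For (2), $f$ is manifestly a grading-preserving linear isomorphism with $f(\1_U\ot\1_V)=\1_V\ot\1_U$ and $f$ fixing the conformal vector, and the intertwining property $fY(a,z)=Y(f(a),z)f$ is once more a matching of the sign $(-1)^{\tilde u\tilde v}$ built into $f$ against the signs in the two tensor-product vertex operators.

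Part (3) is the twisted analogue of (1), and its one new feature is the hypothesis $M\circ\sigma^i\cong M$. For $i=0$ this is vacuous; for $i=1$ it is $\sigma$-stability, which by Lemma \ref{l2.1} and Lemma \ref{l2.2} equips $M$ with a $\BZ_2$-grading $M=M_{\bar 0}\oplus M_{\bar 1}$ and a compatible involution. This is precisely what makes $\tilde x$, and hence the sign $(-1)^{\tilde v\tilde x}$ in $Y(u\ot v,z)(x\ot y)$, well defined; note it is needed only on $M$, because in this formula $v$ is moved only past $x$, never past $y$. I would first verify the grading and truncation conditions: for $u\in U^r$ the mode indices of $Y_M(u,z)$ lie in $\frac rT+\BZ$, so combining with those on $N$ places the modes of $Y(u\ot v,z)$ in the coset determined by the $(\sigma^i\ot\sigma^i)$-eigenvalue of $u\ot v$, which is the required module axiom; the twisted Jacobi identity then follows by the reduction and sign analysis of (1). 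Part (5) I would deduce from (3): a $\sigma$-twisted super module is in particular $\sigma$-stable, so $M\ot N$ is a $\sigma\ot\sigma$-twisted $U\ot V$-module by (3), and setting $\sigma_{M\ot N}=\sigma_M\ot\sigma_N$ produces an involution intertwining the action of $\sigma$, exhibiting $M\ot N$ as $\sigma\ot\sigma$-stable with $\pm1$-eigenspaces the claimed $(M\ot N)_{\bar 0}$ and $(M\ot N)_{\bar 1}$.

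The main obstacle is (4), which I would prove through Zhu's superalgebra. Rationality of $U$ and of $V$ gives finitely many irreducible admissible modules and, via the Zhu correspondence, finite-dimensional semisimple algebras $A(U),A(V)$ whose simple modules are the top levels of the irreducible modules. The crux is an algebra isomorphism $A(U\ot V)\cong A(U)\ot A(V)$, the super analogue of the tensor-product isomorphism in \cite{FHL}; this is where the super signs and the passage to Zhu's \emph{super}algebra demand genuine care, since tensor products of simple superalgebras can behave subtly. Granting it, $A(U\ot V)$ is finite-dimensional semisimple over $\C$, so by Artin--Wedderburn its simple modules are exactly $M_0\ot N_0$ for a simple $A(U)$-module $M_0$ and a simple $A(V)$-module $N_0$. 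Then, given an irreducible $U\ot V$-module $W$, its top level is such a product $M_0\ot N_0$; taking $M,N$ to be the irreducible $U$- and $V$-modules with these top levels, part (3) makes $M\ot N$ an admissible $U\ot V$-module. Since $M\ot N$ is generated by the irreducible (as $A(U\ot V)$-module) top level $M_0\ot N_0$, semisimplicity of the rational algebra $U\ot V$ forces $M\ot N$ to be irreducible, and matching top levels gives $W\cong M\ot N$. I expect the delicate point throughout (4) to be pinning down the super structure of $A(U\ot V)$ and verifying that the module correspondence respects it.
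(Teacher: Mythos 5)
Your proposal is correct and takes essentially the same route as the paper, whose entire proof is to cite \cite{FHL} for (1)--(4) as standard and to deduce (5) from (3); your sign-tracking for (1)--(3) and (5) and your Zhu-algebra argument for (4) are precisely the standard details (compare the proofs of Proposition 2.7 and Lemma 2.8 of \cite{DMZ}, which the paper itself invokes for the twisted analogue in Theorem \ref{twist}). The one subtlety you flag in (4) --- that simple modules of a super tensor product of semisimple superalgebras need not be outer tensor products, as the ${\cal Q}\otimes{\cal Q}$ case shows --- dissolves in the untwisted setting because the Zhu algebra of a vertex operator superalgebra is purely even ($U_{\bar 1}\subseteq O(U)$, since odd modes shift the $\frac{1}{2}\Z_+$-grading by half-integers and so cannot preserve the top level), so $A(U\otimes V)\cong A(U)\otimes A(V)$ is an ordinary tensor product of semisimple algebras and the Artin--Wedderburn step applies exactly as you state.
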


\begin{proof} The proofs of (1)-(4) is fairly standard \cite{FHL}. (5) follows from (3).
\end{proof}

We deal with the tensor product of $\sigma$-twisted modules next. From Lemma \ref{tensor1} we need to understand $M\otimes N$ where both $M$ and $N$ are not $\sigma$-stable in terms of the tensor product of $A_\sigma(U)$ and $A_{\sigma}(V)$ studied in \cite{DZ2}. For this purpose, we need some basic facts on superalgebras and their super modules from \cite{Kl}.

Let ${\cal A}={\cal A}_{\bar 0}+{\cal A}_{\bar 1}$ be a superalgebra. A super ${\cal A}$-module $M$ is defined as a $\Z_2$-graded module $M=M_{\bar 0}\oplus M_{\bar 1}$ such that ${\cal A}_{\bar r}M_{\bar s}\subset M_{\overline{r+s}}.$ ${\cal A}$ is called  semisimple if ${\cal A}$ is completely reducible super ${\cal A}$-module. ${\cal A}$ is  simple if it is semisimple and the only super ideals are $0$ and itself.

  Here are two types of simple superalgebras ${\cal Q}_k$ (${\cal Q}$ type) and ${\cal M}_{m,n}$ (${\cal M}$ type) for positive integer $k$ and nonnegative integers $m,n$ with $m+n>0.$  The ${\cal Q}_k$ is defined to be a subalgebra of matrix algebra $M_{2k\times 2k}$ consisting of $\left(\begin{array}{cc} A & B\\ -B & A\end{array}\right)$
where $A,B$ are arbitrary $k\times k$ complex matrices,  with $B=0$ for even part and $A=0$ for odd one. The ${\cal M}_{m,n}$ is the full matrix algebra $M_{(m+n) \times (m+n)}.$ Write each matrix as
$\left(\begin{array}{cc} A & C\\ D & B\end{array}\right)$ where $A$ is a $m\times m$ matrix, $B$ is a $n\times n$ matrix, $C$ is a $m\times n$ matrix and $D$ is a $n\times m$ matrix, with $C=0, D=0$ for even part and
$A=0, B=0$ for odd part. Clearly, ${\cal Q}$ type is a direct sum of two copies of a full matrix algebra.

One can find the following results in \cite{Kl}.
\begin{thm}\label{superK} Let ${\cal A}$ be a finite dimensional superalgebra.

(1) The following are equivalent: (a) ${\cal A}$ is a  semisimple superalgebra, (b) ${\cal A}$ is a semisimple associative algebra, (c) ${\cal A}$ is a direct sum of simple superalgebras.

(2) Any finite dimensional simple superalgebra is of either ${\cal Q}$ type or ${\cal M}$ type.

(3) For $k>0,$  ${\cal Q}_k$  has a unique irreducible super module of dimension $2k$ which is a direct sum of two inequivalent ${\cal Q}_k$-modules of dimension $k.$

(4) For $m,n\geq 0$ with $m+n>0,$  ${\cal M}_{m,n}$ has a unique irreducible super module of dimension $m+n$ which is also irreducible ${\cal M}_{m,n}$-module.
\end{thm}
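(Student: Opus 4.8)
The plan is to treat the four parts in turn, with the unifying technical device being the \emph{parity automorphism} $\theta$ of $\mathcal{A}$ defined by $\theta|_{\mathcal{A}_{\bar 0}}=1$ and $\theta|_{\mathcal{A}_{\bar 1}}=-1$, which lets one pass back and forth between the graded and the ungraded pictures, together with the observation that a $\theta$-invariant subspace $W$ automatically splits as $W=(W\cap\mathcal{A}_{\bar 0})\oplus(W\cap\mathcal{A}_{\bar 1})$ into its $\pm1$-eigenspaces, i.e. is homogeneous. All of this is over $\C$, so the division-ring issues that complicate the general theory disappear.

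For part (1), the crucial point is that the Jacobson radical $J(\mathcal{A})$ is a homogeneous ideal: since $\theta$ is an algebra automorphism it preserves $J(\mathcal{A})$, and a $\theta$-invariant subspace is homogeneous by the remark above. I would then introduce the super-radical $J_s(\mathcal{A})$, the largest homogeneous nilpotent two-sided ideal (equivalently the intersection of the annihilators of the irreducible super modules), and show $J_s(\mathcal{A})=J(\mathcal{A})$: the inclusion $J(\mathcal{A})\subseteq J_s(\mathcal{A})$ holds because for a finite-dimensional algebra $J(\mathcal{A})$ is nilpotent and we have just seen it is homogeneous, while $J_s(\mathcal{A})\subseteq J(\mathcal{A})$ holds because $J_s(\mathcal{A})$ is a nilpotent ideal and the radical contains every nilpotent ideal. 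Since over $\C$ an associative algebra is semisimple exactly when its radical vanishes, and a superalgebra is semisimple exactly when $J_s(\mathcal{A})=0$, the identity $J_s=J$ gives (a)$\Leftrightarrow$(b); then (b)$\Leftrightarrow$(c) is the graded refinement of Artin--Wedderburn, obtained by decomposing $\mathcal{A}$ into its homogeneous minimal two-sided ideals.

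Part (2) is where the two types $\mathcal{Q}_k$ and $\mathcal{M}_{m,n}$ arise, and this is the step I expect to be the main obstacle. The approach is a $\Z_2$-graded Jacobson density argument: let $M$ be an irreducible super module of the simple superalgebra $\mathcal{A}$ and put $D=\End_{\mathcal{A}}(M)$, the superalgebra of homogeneous module endomorphisms; a super form of Schur's lemma shows $D$ is a finite-dimensional super division algebra over $\C$, and graded density yields $\mathcal{A}\cong\End_D(M)$. The heart of the matter is classifying such $D$. Here $D_{\bar 0}$ is an ordinary division algebra over $\C$, hence $D_{\bar 0}=\C$; if $D_{\bar 1}=0$ then $D=\C$, while any nonzero odd $\varepsilon\in D_{\bar 1}$ is invertible with $\varepsilon^2\in D_{\bar 0}=\C$ nonzero, so after rescaling $\varepsilon^2=1$, and then $\varepsilon^{-1}\varepsilon'\in\C$ for every odd $\varepsilon'$ forces $\dim D_{\bar 1}=1$. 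Thus $D$ is either $\C$ or the ``queer'' algebra $\C[\varepsilon]/(\varepsilon^2-1)$ with $\varepsilon$ odd. Feeding $D=\C$ into $\End_D(M)$, with $\dim M_{\bar 0}=m$ and $\dim M_{\bar 1}=n$, produces the block-graded full matrix algebra $\mathcal{M}_{m,n}$; feeding in the queer $D$, over which $M$ is free of rank $k$, produces $\mathcal{Q}_k$. Tracking the even/odd blocks reproduces the explicit matrix descriptions $\left(\begin{array}{cc} A & C\\ D & B\end{array}\right)$ and $\left(\begin{array}{cc} A & B\\ -B & A\end{array}\right)$ in the statement.

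Parts (3) and (4) are then direct verifications from these explicit models, so I would keep them brief. For $\mathcal{M}_{m,n}=M_{m+n}(\C)$, the natural module $\C^{m+n}=\C^{m}\oplus\C^{n}$ (even part $\C^m$, odd part $\C^n$) is at once the unique irreducible super module and the unique irreducible ungraded module, since $M_{m+n}(\C)$ is a simple associative algebra; this is (4). For $\mathcal{Q}_k$, the map $\left(\begin{array}{cc} A & B\\ -B & A\end{array}\right)\mapsto(A+iB,\,A-iB)$ is an isomorphism of ungraded algebras onto $M_k(\C)\oplus M_k(\C)$, so its natural $2k$-dimensional super module $\C^{k}\oplus\C^{k}$ is the unique irreducible super module yet, upon forgetting the grading, splits into the two inequivalent $k$-dimensional modules afforded by the two matrix factors; this is (3). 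Since the whole statement is classical and available in \cite{Kl}, the only genuinely substantive ingredient is the super-Wedderburn classification carried out in part (2).
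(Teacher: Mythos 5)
The paper offers no proof of this statement: Theorem \ref{superK} is imported verbatim from Kleshchev's book (the sentence preceding it reads ``One can find the following results in \cite{Kl}''), so there is nothing internal to compare your argument against. Your reconstruction is correct and follows the standard super-Wedderburn route, which is essentially the one in the cited source: the parity automorphism $\theta$ forces the Jacobson radical to be a homogeneous ideal, giving the equivalence of graded and ungraded semisimplicity in (1); graded Schur plus graded density reduce (2) to classifying finite-dimensional super division algebras over $\C$, and your computation correctly shows the only possibilities are $\C$ and the queer algebra $\C[\varepsilon]$ with $\varepsilon$ odd and $\varepsilon^2$ a nonzero scalar, yielding $\mathcal{M}_{m,n}$ and $\mathcal{Q}_k$ respectively; and (3), (4) are routine from the explicit models, with the ungraded splitting $\mathcal{Q}_k\cong M_k(\C)\oplus M_k(\C)$ via $A+iB$, $A-iB$ giving the two inequivalent $k$-dimensional constituents. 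The only cosmetic point is that normalizing $\varepsilon^2=1$ produces the block form with $+B$ in the lower-left corner rather than the $-B$ of the paper's presentation of $\mathcal{Q}_k$; over $\C$ the substitution $\varepsilon\mapsto i\varepsilon$ reconciles the two conventions, so this is not a gap.
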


Now we discuss the tensor products of superalgebras and their super modules. Superalgebras are algebras in  $\sV$, which is a braided tensor category. Therefore, the tensor product of two superalgebras is a superalgebra. More precisely, if  ${\cal A}$ and ${\cal B}$ are superalgebras, then ${\cal A} \otimes {\cal B}$ is a superalgebra with
$$({\cal A} \otimes {\cal B})_{\bar 0}= {\cal A}_{\bar 0} \otimes {\cal B}_{\bar 0}+{\cal A}_{\bar 1} \otimes {\cal B}_{\bar 1}, ({\cal A} \otimes {\cal B})_{\bar 1}= {\cal A}_{\bar 0} \otimes {\cal B}_{\bar 1}+{\cal A}_{\bar 1} \otimes {\cal B}_{\bar 0}$$
and
$$(a\otimes b)(a'\otimes b')=(-1)^{\tilde{b}\tilde{a'}}aa'\otimes bb'$$
for any homogeneous elements $a,a'\in {\cal A}$ and $b,b'\in{\cal B}.$ Note that the map $f:{\cal A}\otimes {\cal B}\to {\cal B}\otimes {\cal A}$ with $f(a\otimes b)=(-1)^{\tilde{a}\tilde{b}}b\otimes a$ for $a\in{\cal A}$ and $b\in{\cal B}$ is  the braiding of $\sV$. By \cite{Kl},
$${\cal Q}_{m}\otimes {\cal Q}_n\cong {\cal M}_{mn,mn},\quad {\cal Q}_k\otimes {\cal M}_{m,n}\cong {\cal Q}_{(m+n)k}, \quad {\cal M}_{m,n}\otimes {\cal M}_{k,l}\cong {\cal M}_{mk+nl,ml+nk}$$
as superalgebras or algebras in $\sV$.

We now return to vertex operator superalgebra $V.$ Recall the associative algebra $A_{\sigma}(V)$ from \cite{DZ2}. Let $O_{\sigma}(V)$ to be the subspace of $V$ spanned by $u\circ_\sigma v$ for $u,v\in V$
where
$$u\circ_{\sigma}v=\Res_zY(u,z)v\frac{(1+z)^{\wt u}}{z^2}.$$
Set
$$u*_\sigma v=
\Res_zY(u,z)v\frac{(1+z)^{{\wt}\,u}}{z}$$
and  $A_\sigma(V)=V/O_{\sigma}(V).$
Note that the definition of $A_{\sigma}(V)$ is the same as the Zhu's algebra for a vertex operator algebra.
\begin{thm}\label{dz1} Let $V$ be a vertex operator superalgebra. Then

(1) $A_{\sigma}(V)$ is an associative algebra with product induced from $*_{\sigma}$ on $V$ and identity $\1+O_{\sigma}(V).$ Moreover, $\omega+O_\sigma(V)$ is an central element.

(1') $A_{\sigma}(V)$ is a superalgebra with
$$A_{\sigma}(V)_{\bar r}=(V_{\bar r}+O_\sigma(V))/O_{\sigma}(V)\cong V_{\bar r}/O_{\sigma}(V)\cap V_{\bar r}.$$

(2) If $M=\oplus_{n\geq 0}M(n)$ is an admissible $\sigma$-twisted $V$-module with $M(0)\ne 0$ then $M(0)$ is an $A_\sigma(V)$-module such that $v+O_\sigma(V)$ acts as $o(v)$ where $o(v)=v_{\wt v-1}.$

(2') If $M=\oplus_{n\geq 0}M(n)$ is an  admissible $\sigma$-twisted super $V$-module with $M(0)\ne 0$ then $M(0)$ is a super $A_\sigma(V)$-module such that $v+O_\sigma(V)$ acts as $o(v).$

(3) The assignment, $M\to M(0)$, defines a bijection between inequivalent irreducible admissible  $\sigma$-twisted $V$-modules and inequivalent irreducible  $A_\sigma(V)$-modules.

(3') The assignment, $M\to M(0)$, defines a bijection between inequivalent irreducible  admissible $\sigma$-twisted super $V$-modules and inequivalent irreducible super $A_\sigma(V)$-modules.

(4) If $V$ is $\sigma$-rational then $A_\sigma(V)$ is a finite dimensional semisimple associative algebra.

(4') If $V$ is $\sigma$-rational then $A_\sigma(V)$ is a finite dimensional  semisimple superalgebra.

\end{thm}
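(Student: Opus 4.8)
The plan is to treat the unprimed statements (1), (2), (3), (4) as the classical Zhu-algebra correspondence for $\sigma$-twisted modules, already established in \cite{DZ2}, and to obtain the primed statements (1'), (2'), (3'), (4') by tracking the $\Z_2$-grading through that construction. The starting observation is that for $g=\sigma$ one has $o(g\sigma)=o(\sigma^2)=1$, so an admissible $\sigma$-twisted module carries an integral grading $M=\oplus_{n\geq 0}M(n)$; consequently the products $\circ_\sigma$ and $*_\sigma$ involve only integral powers of $z$ and $A_\sigma(V)$ has formally the same shape as Zhu's algebra. Thus (1)--(4) are verbatim the arguments of \cite{DZ2} (and, in the untwisted case, \cite{Z, DLM3}): associativity of $*_\sigma$, centrality of $\omega+O_\sigma(V)$, the action of $o(v)$ on the bottom level, the bijection via the generalized Verma (induced) module and its unique irreducible quotient, and finite-dimensional semisimplicity under $\sigma$-rationality.

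For (1') I would first check that $O_\sigma(V)$ is a $\Z_2$-graded subspace: if $u\in V_{\bar r}$ and $v\in V_{\bar s}$ are homogeneous, then every coefficient $u_n v$ of $Y(u,z)v$ lies in $V_{\overline{r+s}}$, so $u\circ_\sigma v\in V_{\overline{r+s}}$; hence $O_\sigma(V)=(O_\sigma(V)\cap V_{\bar 0})\oplus(O_\sigma(V)\cap V_{\bar 1})$ and the quotient inherits the $\Z_2$-grading with $A_\sigma(V)_{\bar r}\cong V_{\bar r}/(O_\sigma(V)\cap V_{\bar r})$ by the second isomorphism theorem. The same parity count shows $V_{\bar r}*_\sigma V_{\bar s}\subseteq V_{\overline{r+s}}+O_\sigma(V)$, so the induced product respects the grading and $A_\sigma(V)$ is a superalgebra. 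Statement (2') is then immediate from (2): since $v_n$ has $\Z_2$-degree $\tilde v$ as an operator, $o(v)=v_{\wt v-1}$ maps $M(0)_{\bar s}$ into $M(0)_{\overline{\tilde v+s}}$, so the bottom level of a $\sigma$-twisted super module is a super $A_\sigma(V)$-module.

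The substantive point, and what I expect to be the \emph{main obstacle}, is the super bijection (3'). Here I would run the induction functor of (3) while keeping the $\sigma$-action: given an irreducible super $A_\sigma(V)$-module $U=U_{\bar 0}\oplus U_{\bar 1}$, the generalized Verma module is built from the $\Z_2$-graded $V$ acting on the $\Z_2$-graded $U$, so it and its irreducible quotient $L(U)$ carry a compatible $\Z_2$-grading making $L(U)$ $\sigma$-stable, hence an admissible $\sigma$-twisted super module with $L(U)(0)\cong U$; conversely $M\mapsto M(0)$ sends an irreducible $\sigma$-twisted super module to an irreducible super $A_\sigma(V)$-module by (2') and Lemma \ref{supert}. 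To see these are mutually inverse and exhaust both sides, I would match them against the ordinary correspondence (3) using the classification of simple superalgebras in Theorem \ref{superK}: an $\mathcal{M}$-type summand of $A_\sigma(V)$ has a unique irreducible super module that is also ordinarily irreducible, corresponding to a $\sigma$-stable irreducible ordinary module (the case $M\circ\sigma\cong M$ of Lemma \ref{l2.2}), while a $\mathcal{Q}$-type summand has a unique irreducible super module splitting into two inequivalent ordinary modules, corresponding to a pair $N$, $N\circ\sigma$ glued as in Lemma \ref{supert}. Finally (4') is immediate: by Theorem \ref{superK}(1) a finite-dimensional superalgebra is a semisimple superalgebra precisely when it is semisimple as an associative algebra, so (4) yields (4') at once.
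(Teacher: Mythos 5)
Your proposal is correct and follows essentially the same route as the paper, whose entire proof is to cite \cite{DZ2} for (1)--(4) and assert that (1')--(4') follow ``similarly with obvious modifications''; your write-up simply makes those modifications explicit (the $\BZ_2$-grading of $O_\sigma(V)$, the parity of $o(v)$, the matching of $\mathcal{Q}$/$\mathcal{M}$-type summands with non-$\sigma$-stable/$\sigma$-stable twisted modules, and Theorem \ref{superK}(1) for semisimplicity).
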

\begin{proof} (1)-(4) are given in \cite{DZ2} and the proofs of (1')-(4') can been proved similarly with obvious modifications.
\end{proof}

Now we assume that $V$ is $\sigma$-rational. Let
$$\{N^0, N^0_\sigma ,\dots ,N^q,N^q_\sigma, N^{q+1},\dots ,N^p\}$$
be a complete set of inequivalent irreducible $\sigma$-twisted $V$-modules where $N^i, N^i_\sigma=N^i\circ \sigma$ are inequivalent for $i=0,\dots ,q$ and $N^j\cong N^j\circ \sigma$ for $j=q+1,\dots ,p.$ Then
$$A_\sigma(V)=\bigoplus_{i=0}^q(\End N^i(0)\oplus\End N^i_\sigma(0))\bigoplus \bigoplus_{j=q+1}^p\End N^j(0).$$
For short we denote the $\End N^i(0)\oplus\End N^i_\sigma(0)$ by $A_{\sigma}(V)^i$ for $i=0,\dots ,q$ and  $\End N^j(0)$ by $A_{\sigma}(V)^j$ for $j=q+1,\dots ,p.$
Then $A_{\sigma}(V)=\oplus_{i=0}^pA_{\sigma}(V)^i.$

\begin{lem}\label{l3.4} Let $V$ be a $\sigma$-rational vertex operator superalgebra. If $i=0,\dots ,q,$ $A_\sigma(V)^i$ is a simple  ${\cal Q}$ type superalgebra with unique irreducible super module $N^i(0)\oplus N^i_\sigma(0)$ which is a direct sum of two inequivalent irreducible $A_\sigma(V)^i$-modules
$N^i(0)$ and $N^i_\sigma (0).$ If $i=q+1,\dots ,p,$  $A_\sigma(V)^i$ is a simple  ${\cal M}$ type superalgebra with unique irreducible super module $N^i(0).$
\end{lem}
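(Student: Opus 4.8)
The plan is to read off the superalgebra structure of $A_\sigma(V)$ from the Wedderburn decomposition of its underlying associative algebra together with the action of the parity operator, and then to match each summand against the classification in Theorem \ref{superK}. By Theorem \ref{dz1}(4'), $A_\sigma(V)$ is a finite dimensional semisimple superalgebra, so by Theorem \ref{superK}(1) it is a direct sum of simple superalgebras, each of ${\cal Q}$ or ${\cal M}$ type. On the other hand, Lemmas \ref{l2.2} and \ref{supert} together with Theorem \ref{dz1}(3') tell us exactly what the irreducible super $A_\sigma(V)$-modules are: the non-$\sigma$-stable modules come in conjugate pairs $N^i, N^i_\sigma$ and produce the irreducible super modules $N^i(0)\oplus N^i_\sigma(0)$ for $i=0,\dots,q$, while each $\sigma$-stable $N^j$ is itself an irreducible super module, yielding $N^j(0)$ for $j=q+1,\dots,p$. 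Since a semisimple superalgebra has exactly one irreducible super module per simple summand, there are $p+1$ simple summands, matching the $p+1$ pieces $A_\sigma(V)^i$.

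The crucial step is to check that the specific grouping in the statement coincides with the decomposition into simple superalgebras, and this is where the parity operator enters. First I would verify that the canonical automorphism $\sigma$ of $V$ descends to an automorphism of $A_\sigma(V)$: since $\sigma$ preserves weights and satisfies $\sigma Y(u,z)\sigma^{-1}=Y(\sigma u,z)$, it fixes $O_\sigma(V)$ and respects $*_\sigma$, and it acts as $(-1)^r$ on $A_\sigma(V)_{\bar r}$, i.e. it is precisely the parity operator of the superalgebra. Next I would identify the induced permutation of the Wedderburn blocks: twisting an $A_\sigma(V)$-module by $\sigma$ corresponds to the twist $N\mapsto N\circ\sigma$ of $\sigma$-twisted $V$-modules, because $o_{N\circ\sigma}(v)=o_N(\sigma v)$ shows $(N\circ\sigma)(0)$ is the $\sigma$-twist of $N(0)$. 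Consequently $\sigma$ interchanges the blocks $\End N^i(0)$ and $\End N^i_\sigma(0)$ for $i\le q$ (these being distinct since $N^i\not\cong N^i_\sigma$) and fixes the block $\End N^j(0)$ for $j>q$. Because the super-ideals of $A_\sigma(V)$ are exactly its $\sigma$-invariant two-sided ideals, the minimal such ideals are $\End N^i(0)\oplus\End N^i_\sigma(0)=A_\sigma(V)^i$ for $i\le q$ and $\End N^j(0)=A_\sigma(V)^j$ for $j>q$; these are the simple superalgebra summands.

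It then remains to name the type of each summand and its irreducible super module. For $i\le q$, $A_\sigma(V)^i$ is a direct sum of two full matrix algebras as an associative algebra, so it cannot be of ${\cal M}$ type and must be of ${\cal Q}$ type; by Theorem \ref{superK}(3) its unique irreducible super module has dimension $2\dim N^i(0)$ and splits into two inequivalent ordinary irreducibles, which are forced to be $N^i(0)$ and $N^i_\sigma(0)$, so the super module is $N^i(0)\oplus N^i_\sigma(0)$. For $j>q$, $A_\sigma(V)^j$ is a single full matrix algebra, hence of ${\cal M}$ type; by Theorem \ref{superK}(4) its unique irreducible super module is ordinary-irreducible, and since $N^j$ is already an irreducible ordinary $\sigma$-twisted $V$-module whose $\Z_2$-grading is supplied by Lemma \ref{l2.2}, this super module is $N^j(0)$.

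The main obstacle I anticipate is not any single computation but the bookkeeping in the crucial step: one must be sure that the odd part of $A_\sigma(V)$ genuinely mixes each conjugate pair of blocks and preserves each self-conjugate block, so that the grouping $\{A_\sigma(V)^i\}$ is neither too coarse nor too fine relative to the simple-superalgebra decomposition. The module-twist identification $(N\circ\sigma)(0)\cong N(0)^\sigma$ is what makes this precise, and everything else follows from Theorem \ref{superK} once the blocks are correctly grouped.
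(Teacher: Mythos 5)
Your proposal is correct, but it reaches the conclusion by a genuinely different route from the paper. The paper argues module-first: for $i\le q$ it writes down the $\BZ_2$-grading on $N^i(0)\oplus N^i_\sigma(0)$ explicitly (the graded pieces being spanned by $(w,(-1)^rw)$ for $w\in N^i(0)$), observes that both pieces are isomorphic irreducible modules over the even subalgebra, deduces that $N^i(0)\oplus N^i_\sigma(0)$ is an irreducible super module, and only then concludes that $A_\sigma(V)^i$ is simple of ${\cal Q}$ type; the case $i>q$ is dismissed as clear. You instead argue algebra-first: you identify the parity involution of $A_\sigma(V)$ with the map induced by $\sigma$, use the identification $(N\circ\sigma)(0)\cong N(0)^\sigma$ to see how it permutes the Wedderburn blocks, characterize super-ideals as the parity-invariant two-sided ideals, and so recognize the $A_\sigma(V)^i$ as exactly the minimal super-ideals, after which the type and the unique irreducible super module are read off from Theorem \ref{superK} and the underlying associative-algebra shape (two matrix blocks versus one). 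Your route is longer but buys something the paper leaves implicit: a proof that each $A_\sigma(V)^i$ really is a graded subalgebra and a simple superalgebra summand, i.e.\ that the grouping is neither too coarse nor too fine; the paper's route buys brevity and a concrete construction of the irreducible super module named in the statement. The only points worth double-checking in your write-up are the ones you already flag: that $N^i(0)\not\cong N^i_\sigma(0)$ as $A_\sigma(V)$-modules (which follows from Theorem \ref{dz1}(3)) so that the two blocks are genuinely distinct, and the standard fact that an ideal of a superalgebra is graded precisely when it is stable under the parity automorphism. Both hold, so the argument is complete.
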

\begin{proof} By Theorem \ref{dz1}, $A_\sigma(V)^i$ is  semisimple. Clearly, if $i>q,$ $A_\sigma(V)^i$ is a simple ${\cal M}$ type superalgebra with unique irreducible super module $N^i(0).$
If $i\leq q,$ note that $N^i(0)\oplus N_\sigma^i(0)$ is a super  $A_\sigma(V)^i$-module  with $(N^i(0)\oplus N_\sigma^i(0))_{\ol r}$ spanned by $(w, (-1)^r w)$ for $w\in N^i(0).$ Since both $(N^i(0)\oplus N_\sigma^i(0))_{\ol r}$
for $r=0,1$ are isomorphic  irreducible  $A_\sigma(V)_{\ol 0}^i$-modules, we immediately see that  $N^i(0)\oplus N_\sigma^i(0)$ is an irreducible super $A_\sigma(V)^i$ -module  and  $A_\sigma(V)^i$ is a simple
superalgebra of ${\cal Q}$ type. The proof is complete.
\end{proof}

We now can establish the following results on the tensor product of $\sigma$-twisted modules. Let $U$ be another $\sigma$-rational vertex operator superalgebra and
$$\{W^{i'}, W^{i'}_\sigma, W^{j'}\,|\, i'=0,\dots ,q', j'=q'+1,\dots ,p'\}$$
is a complete set of inequivalent irreducible $\sigma$-twisted $U$-modules.
\begin{thm}\label{twist} Let $U,V$ be as above. Then $U\otimes V$ is $\sigma$-rational. Moreover, we have

(1) For $i'=0,\dots ,q', i=0,\dots ,q,$ $(W^{i'}\oplus W^{i'}_\sigma)\otimes (N^{i}\oplus N^{i}_\sigma)$ is a sum of two isomorphic irreducible $\sigma$-twisted $U\otimes V$-modules which are $\sigma$-stable.

(2)  For $i'=0,\dots ,q', j=q+1,\dots ,p,$ $(W^{i'} \oplus W^{i'}_\sigma)\otimes N^j$ is a sum of two inequivalent irreducible $\sigma$-twisted $U\otimes V$-modules $W^{i'}\otimes N^j$  and $W^{i'}_\sigma\otimes N^j$. In particular,
$(W^{i'}\otimes N^j)\circ \sigma\cong  W^{i'}_\sigma\otimes N^j.$

(3) For $j'=q'+1,\dots ,p', i=0,\dots ,q,$ $W^{j'}\otimes (N^{i}\oplus N^{i}_\sigma)$ is a sum of two inequivalent irreducible $\sigma$-twisted $U\otimes V$-modules  $W^{j'}\otimes N^i$   and $W^{j'}\otimes  N^{i}_\sigma$
such that $(W^{j'}\otimes N^i)\circ \sigma\cong W^{j'}\otimes  N^{i}_\sigma.$

(4) For $j'=q'+1,\dots ,p',j=q+1,\dots ,p,$ $W^{j'}\otimes N^j$ is an irreducible $\sigma$-twisted $U\otimes V$-module which is $\sigma$-stable.

(5) Every irreducible $\sigma$-twisted $U\otimes V$-module is isomorphic to one of the irreducible $\sigma$-twisted modules listed in (1)-(4).
\end{thm}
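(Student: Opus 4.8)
The plan is to reduce the whole statement to the structure theory of the finite-dimensional superalgebras $A_\sigma(U)$, $A_\sigma(V)$ and their super tensor product, using the bijection of Theorem \ref{dz1} between irreducible (super) $\sigma$-twisted modules and irreducible (super) modules over the Zhu superalgebra. The key algebraic input I would establish first is the superalgebra isomorphism
\[
A_\sigma(U\otimes V)\cong A_\sigma(U)\otimes A_\sigma(V),
\]
where the right-hand side carries the super tensor product, i.e. the algebra structure in $\sV$. This is the twisted super analogue of the classical identity $A(U\otimes V)\cong A(U)\otimes A(V)$ for Zhu's algebras in \cite{FHL}, and the computation of \cite{DZ2} should adapt once the signs dictated by Lemma \ref{tensor1}(1) are inserted into the products $*_\sigma$ and $\circ_\sigma$. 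Since $U$ and $V$ are $\sigma$-rational, $A_\sigma(U)$ and $A_\sigma(V)$ are finite-dimensional semisimple superalgebras by Theorem \ref{dz1}(4$'$), so their tensor product is again semisimple by Theorem \ref{superK}(1); together with the $\sigma$-rationality of $U\otimes V$ (argued as in the untwisted VOA case), this makes Theorem \ref{dz1}(3),(3$'$) available for $U\otimes V$.

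Next I would decompose into blocks. By Lemma \ref{l3.4}, $A_\sigma(U)=\bigoplus_a A_\sigma(U)^a$ and $A_\sigma(V)=\bigoplus_b A_\sigma(V)^b$, each summand being a simple superalgebra of $\mathcal{Q}$ type (when the corresponding module is not $\sigma$-stable) or $\mathcal{M}$ type (when it is). Hence
\[
A_\sigma(U\otimes V)\cong\bigoplus_{a,b}A_\sigma(U)^a\otimes A_\sigma(V)^b,
\]
and each summand is simple, of a type read off from the rules $\mathcal{Q}_m\otimes\mathcal{Q}_n\cong\mathcal{M}_{mn,mn}$, $\mathcal{Q}_k\otimes\mathcal{M}_{m,n}\cong\mathcal{Q}_{(m+n)k}$, $\mathcal{M}_{m,n}\otimes\mathcal{M}_{k,l}\cong\mathcal{M}_{mk+nl,ml+nk}$ recorded from \cite{Kl}. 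This already determines, for each pair $(a,b)$, whether the resulting irreducible $\sigma$-twisted $U\otimes V$-module is $\sigma$-stable ($\mathcal{M}$ type) or occurs in a non-$\sigma$-stable pair ($\mathcal{Q}$ type).

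Finally I would carry out the degree-zero analysis. The lowest-weight space of a tensor product module is the super tensor product of the lowest-weight spaces, so over the block $A_\sigma(U)^a\otimes A_\sigma(V)^b$ the relevant degree-zero space is the super tensor product of the corresponding irreducible super modules from Theorem \ref{superK}(3),(4). Comparing dimensions then settles the four cases: the super tensor product of two irreducible $\mathcal{Q}$-modules is two copies of the irreducible $\mathcal{M}$-module, giving case (1) (two isomorphic $\sigma$-stable irreducibles); a $\mathcal{Q}$-module tensored with an $\mathcal{M}$-module gives a single irreducible $\mathcal{Q}$ super module, which splits as two inequivalent ordinary modules, giving cases (2) and (3); and two $\mathcal{M}$-modules tensor to the irreducible $\mathcal{M}$-module, giving case (4). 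To name the constituents in (2)--(4) I would use that the $\sigma$-stable factor ($N^j$ or $W^{j'}$) makes $W^{i'}\otimes N^j$ and $W^{j'}\otimes N^i$ genuine $\sigma$-twisted $U\otimes V$-modules via Lemma \ref{tensor1}(3), after the flip $U\otimes V\cong V\otimes U$ of Lemma \ref{tensor1}(2) when the $\sigma$-stable tensorand sits on the right; the relations $(W^{i'}\otimes N^j)\circ\sigma\cong W^{i'}_\sigma\otimes N^j$ then follow from the compatibility of $\circ\sigma$ with the tensor product together with the $\sigma$-stability of the other factor. Exhausting all pairs $(a,b)$ yields completeness, which is statement (5).

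The step I expect to be the main obstacle is the superalgebra isomorphism $A_\sigma(U\otimes V)\cong A_\sigma(U)\otimes A_\sigma(V)$: one must propagate the braiding signs $(-1)^{\tilde v\tilde{u'}}$ of Lemma \ref{tensor1}(1) through the definitions of $O_\sigma(U\otimes V)$ and $*_\sigma$, and check that the induced product is the \emph{super} tensor product of \cite{Kl} rather than the ordinary one; separately one must secure the $\sigma$-rationality of $U\otimes V$ so that Theorem \ref{dz1} applies on the left-hand side. A secondary subtlety is case (1): since neither $W^{i'}$ nor $N^i$ is $\sigma$-stable, Lemma \ref{tensor1}(3) does not apply to $W^{i'}\otimes N^i$ directly, and the irreducible constituents are not simple tensors but diagonal summands of the $\sigma$-twisted super module $(W^{i'}\oplus W^{i'}_\sigma)\otimes(N^i\oplus N^i_\sigma)$, which is why that case is phrased only up to the two isomorphic irreducibles.
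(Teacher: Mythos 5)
Your proposal is correct and follows essentially the same route as the paper: the paper likewise reduces everything to the isomorphism $A_{\sigma}(U\otimes V)\cong A_{\sigma}(U)\otimes A_{\sigma}(V)$ (via the proof of Lemma 2.8 of \cite{DMZ}) and the block-by-block analysis of $\mathcal{Q}$- and $\mathcal{M}$-type simple superalgebras from Lemma \ref{l3.4} and Theorem \ref{superK}, with part (1) argued exactly by your dimension count on the degree-zero spaces and part (5) from Theorem \ref{dz1}(3). The only cosmetic difference is that the paper first verifies (2)--(4) directly from Lemma \ref{tensor1} and then remarks that the superalgebra tensor product gives an alternative proof of those parts, which is precisely the uniform route you take; your attention to the flip of Lemma \ref{tensor1}(2) when the $\sigma$-stable tensorand sits on the right, and to the fact that the constituents in case (1) are not simple tensors, matches the paper's treatment.
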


\begin{proof} The proof of $\sigma$-rationality of $U\otimes V$ is similar to that of Proposition 2.7 of \cite{DMZ}. (2)-(4) can be verified directly by Lemma \ref{tensor1}. For (1), we need  $A_{\sigma}(U\otimes V).$ Using the exact proof of Lemma 2.8 in \cite{DMZ} yields $A_{\sigma}(U\otimes V)\cong A_{\sigma}(U)\otimes A_{\sigma}(V).$ This gives
$$A_{\sigma}(U\otimes V)=\bigoplus_{0\leq i'\leq p', 0\leq i\leq p}A_{\sigma}(U)^{i'}\otimes A_{\sigma}(V)^i.$$
Note that these tensor product superalgebras are superalgebras with the multiplication given in the remark after Theorem \ref{superK}.
Using Lemma \ref{l3.4} and the tensor products of simple superalgebras we can give a different proof of (2)-(4). We now prove (1). In this case, $i'\leq q',$ $i\leq q$ and $A_{\sigma}(U)^{i'}\otimes A_{\sigma }(V)^i$ is isomorphic to the simple superalgebra ${\cal M}_{mn,mn}=M_{2mn\times 2mn},$ where $m=\dim W^{i'}(0)$ and $n=\dim N^i(0).$ So  $A_{\sigma}(U)^{i'}\otimes A_{\sigma}( V)^i$ has a unique irreducible module of dimension $2mn.$  Since $(W^{i'}(0) \oplus W^{i'}_\sigma(0))\otimes (N^{i}(0)\oplus N^{i}_\sigma(0))$ is an  $A_{\sigma}(U)^{i'}\otimes A_{\sigma}(V)^i$-module of dimension $4mn$, it has to be a sum of two isomorphic  irreducible super $A_{\sigma}(U)^{i'}\otimes A_{\sigma}( V)^i$-modules. As a result, $(W^{i'}+ W^{i'}_\sigma)\otimes (N^{i}+ N^{i}_\sigma)$ is a sum of two isomorphic irreducible $\sigma$-twisted $U\otimes V$-modules which are $\sigma$-stable. (5) follows from Theorem \ref{dz1} (3).
 \end{proof}

\section{Modular Invariance} \label{s4}

We discuss the modular invariance property of the trace functions in orbifold theory  for \vosa
from \cite{Hu2},  \cite{DZ1} and \cite{DLM7} . Also see \cite{Z}.  We also correct a mistake on the number of irreducible
$\sigma$-twisted $V$-modules in \cite{DZ1}.

For the purpose of the modular invariance, we recall the  vertex operator superalgebra $(V, Y[~], \1, \tilde{\omega})$ associated to a vertex operator superalgebra $V$ defined in \cite{Z}.  Here $\tilde{\omega}=\omega-c/24$ and
$$Y[v,z]=Y(v,e^z-1)e^{z\cdot \wt v}=\sum_{n\in \Z}v[n]z^{-n-1}$$
for homogeneous $v.$ Write
$$Y[\tilde{\omega},z]=\sum_{n\in \Z}L[n]z^{-n-2}.$$
The weight of  $v\in V$  in $(V, Y[~], \1, \tilde{\omega})$ is denoted  by $\wt [v].$

In the rest of this paper, we assume that $V=\oplus_{n\geq 0}V_n$ is a simple vertex operator superalgebra such that

A1. {\em  $V_{\bar 0}$ is regular vertex operator algebra of CFT type,}

A2. {\em The weight of any irreducible $\sigma^i$-twisted $V$-module is positive except for $V$ itself with $i=0,1.$}

Then $V$ is $\sigma^i$-rational for $i=0,1$ by Theorem 4.1 of \cite{DH} and $C_2$-cofinite \cite{ABD}.  Using the arguments from \cite{M} and \cite{CM}  one can show, in fact,  that  $V$ is regular if and only if  $V_{\bar 0}$ is regular.

Denote by $\mathscr{M}(g)$ the inequivalent irreducible $g$-twisted $V$-modules for $g=1,\sigma.$ and set $\mathscr{M}(g,h)=\{M \in \mathscr{M}(g)|  M\circ h \cong M\}$ for $g,h=1,\sigma.$ Note from Lemma \ref{l2.1} that $\mathscr{M}(1,h)=\mathscr{M}(1)$ for $h=1,\sigma.$ Also,  $\mathscr{M}(\sigma,1)=\mathscr{M}(\sigma).$ Then  $\mathscr{M}(g)$  and $\mathscr{M}(g,h)$  are finite sets.

Let $M\in \mathscr{M}(\sigma g,\sigma h)$.
For $v\in V$,  we denote $v_{\wt v-1}$ by $o(v)$ as usual and  set
\begin{equation*}
Z_M(v, (g,h),\tau)=\tr_{_M}o(v)\sigma h q^{L(0)-c/24}=q^{\lambda-c/24}\sum_{n\in\frac{1}{T}\Z_+}\tr_{_{M_{\l+n}}}o(v) \sigma hq^{n}
\end{equation*}
if either $(g,h)\ne (1,\sigma) $ or $(g,h)=(1,\sigma)$ and $M\circ \sigma\cong M,$
 and
\begin{equation*}
Z_M(v, (g,h),\tau)=\frac{1}{\sqrt{2}}\tr_{_M}o(v+\sigma v)q^{L(0)-c/24}=\frac{1}{\sqrt{2}}q^{\lambda-c/24}\sum_{n\in\frac{1}{T}\Z_+}\tr_{_{M_{\l+n}}}o(v+\sigma v)q^{n}
\end{equation*}
if $(g,h)=(1,\sigma)$ and $M\circ \sigma\not\cong M.$ Note that if  $(g,h)=(1,\sigma)$ and $M\circ \sigma\not\cong M$
then
\begin{eqnarray*}
& & Z_M(v, (1,\sigma),\tau)=\frac{1}{\sqrt{2}}(\tr_{_M}o(v)q^{L(0)-c/24}+\tr_{M\circ\sigma}o(v)q^{L(0)-c/24})\\
& &\ \ \ \ =\sqrt{2}\tr_{_M}o(v)q^{L(0)-c/24}\\
& &\ \ \ \ =Z_{M\circ\sigma}(v,(1,\sigma),\tau).
\end{eqnarray*}

The insertion of $\sqrt{2}$ in the definition of $Z_M(v, (1,\sigma),\tau)$ will ensure that the corresponding $S$-matrix is unitary (see the discussion in Section 6).

From \cite{DZ1} we known that  $Z_M(v,(g,h),\tau)$  are holomorphic function on the upper half plane $\H$ with $q=e^{2\pi i\tau}$  \cite{DZ1}.
The definition of  $Z_M(v, (1,\sigma),\tau)$ given in this paper in the case $M\circ \sigma\not\cong M$  is different from \cite{DZ1} where the isomorphism between $M$ and $M\circ \sigma$ was required. This new definition
ensures that  $Z_M(v, (g,h),\tau)$ is a vector in the conformal block ${\cal C}(1,\sigma)$    \cite{DZ1}.  According to the definition
of the conformal block, $Z_M(v,(1,\sigma),\tau)=0$ if $\sigma(v)=-v.$ Clearly, $\tr_{_M}o(v)q^{L(0)-c/24}$ is not necessarily zero
for such $M.$ But $Z_M(v,(1,\sigma),\tau)$ is zero in our new definition.

Define $Z_M(v,\tau)=\tr_Mo(v)q^{L(0)-c/24}$ for $\sigma^s$-twisted $V$-modules $M$ and $s=0,1.$ Then $Z_{M^i}(v,\tau)=Z_{M^i}(v,(\sigma,\sigma),\tau)$ for all $i,$ $Z_{N^j}(v,\tau)=\frac{1}{\sqrt{2}}Z_{N^j}(v, (1,\sigma),\tau)$ if $j=0,\dots ,q$  and $Z_{N^j}(v,\tau)=Z_{N^j}(v, (1,\sigma),\tau)$ if $j=q+1,\dots ,p.$
We also set $\chi_M(\tau)=\tr_Mq^{L(0)-c/24}$ which is called the character of $M.$

\begin{lem}\label{lvanish} If $M\in \mathscr{M}(\sigma g,\sigma h)$ and $v\in V_{\bar 1}$ then $Z_M(v, (g,h),\tau)=0$ for any $g,h.$
\end{lem}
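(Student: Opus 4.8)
The plan is to reduce everything to the single fact that $o(v)$ is an \emph{odd} operator when $v\in V_{\bar 1}$, and then annihilate the trace using an order-$2$ automorphism of $M$ that commutes with the remaining factors in $Z_M$. First I would dispose of the one case governed by the second formula for $Z_M$, namely $(g,h)=(1,\sigma)$ with $M\circ\sigma\not\cong M$. There $Z_M(v,(1,\sigma),\tau)=\frac{1}{\sqrt 2}\tr_M o(v+\sigma v)q^{L(0)-c/24}$, and since $v\in V_{\bar 1}$ gives $\sigma v=-v$, we have $v+\sigma v=0$ and the trace is identically $0$. This leaves exactly the cases in which the first formula $Z_M(v,(g,h),\tau)=\tr_M o(v)\,(\sigma h)\,q^{L(0)-c/24}$ applies, i.e.\ $(g,h)\in\{(1,1),(\sigma,1),(\sigma,\sigma)\}$ for arbitrary $M$, and $(g,h)=(1,\sigma)$ with $M\circ\sigma\cong M$.

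In each of these remaining cases $M$ is $\sigma$-stable, so there is an involution $\sigma_M\in\mathrm{GL}(M)$ satisfying $\sigma_M Y_M(u,z)\sigma_M^{-1}=Y_M(\sigma u,z)$ for all $u\in V$: when $g=\sigma$ (so $M$ is untwisted) this is the $\sigma$-action of Lemma \ref{l2.1}, and when $g=1$ (so $M$ is $\sigma$-twisted, and $\sigma$-stable by the case hypothesis) it is the involution of Lemma \ref{l2.2}. Comparing coefficients in the intertwining relation gives $\sigma_M o(v)\sigma_M^{-1}=o(\sigma v)=-o(v)$ for $v\in V_{\bar 1}$. Since $\sigma\omega=\omega$ forces $\sigma_M L(0)\sigma_M^{-1}=L(0)$, the factor $q^{L(0)-c/24}$ commutes with $\sigma_M$; and the inserted operator $\sigma h$ is in every case either $\mathrm{id}_M$ or $\sigma_M$ itself, hence also commutes with $\sigma_M$.

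The conclusion is then immediate from cyclicity of the ordinary trace. Conjugating by $\sigma_M$ inside the trace,
\begin{align*}
Z_M(v,(g,h),\tau)
&=\tr_M\,\sigma_M\big(o(v)\,(\sigma h)\,q^{L(0)-c/24}\big)\sigma_M^{-1}\\
&=\tr_M\big(\sigma_M o(v)\sigma_M^{-1}\big)(\sigma h)\,q^{L(0)-c/24}
=-\,Z_M(v,(g,h),\tau),
\end{align*}
whence $Z_M(v,(g,h),\tau)=0$. Equivalently: $o(v)$ interchanges the two $(\pm1)$-eigenspaces of $\sigma_M$ while $(\sigma h)\,q^{L(0)-c/24}$ preserves each of them, so the operator being traced is block off-diagonal.

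The genuinely delicate point is choosing the correct involution in the twisted case, not the trace computation. On an untwisted $M$ the statement is nearly automatic, because the $\sigma_M$-eigenspaces are the super-summands $M_{\bar 0}$ and $M_{\bar 1}$ carrying integer and half-integer $L(0)$-weights respectively, so the odd operator $o(v)$ is off-diagonal already for the conformal grading. On a $\sigma$-twisted $M$ this shortcut fails: such a module is $\Z_+$-graded and $o(v)$ \emph{preserves} each $L(0)$-eigenspace, so the vanishing cannot be read off from the conformal grading alone. It is precisely here that one must invoke the module automorphism $\sigma_M$ supplied by Lemma \ref{l2.2}, which exists exactly because $M\circ\sigma\cong M$, and verify that it anticommutes with $o(v)$ while commuting with $\sigma h$ and $L(0)$.
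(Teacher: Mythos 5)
Your proof is correct and follows essentially the same route as the paper: the non-$\sigma$-stable case is disposed of exactly as in the text, via $v+\sigma v=0$ in the defining formula of $Z_M(v,(1,\sigma),\tau)$, and your conjugation-by-$\sigma_M$ argument for the $\sigma$-stable cases is the standard computation that the paper simply delegates to \cite[Lemma 6.3]{DZ1}. You also correctly isolate the one genuinely nontrivial configuration --- a $\sigma$-stable $\sigma$-twisted module, where $o(v)$ preserves the $\Z_+$-grading and the vanishing is not visible from the conformal grading alone.
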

\begin{proof} If $M\circ \sigma\cong M$ the result was obtained in Lemma 6.3 of \cite{DZ1}. It remains to prove the result
if $M$ is an irreducible  $\sigma$-twisted $V$-module $M$ with $M\circ \sigma \not\cong M$.  However, this follows from the preceding discussion.
 \end{proof}

 Let $W$ be the vector space spanned by $Z_M(v,(g,h),\tau)$ for $g,h\in\{1,\sigma\}$ and $M\in \mathscr{M}(\sigma g,\sigma h)$. Then $Z_M$ can be regraded as a function on $V\times \H.$ Now, we define an action of the modular group $\Gamma=SL_2(\Z)$ on $W$ such that
\begin{equation*}
Z_M|_\gamma(v,(g,h),\tau)=(c\tau+d)^{-{\rm wt}[v]}Z_M(v,(g,h), \gamma\tau),
\end{equation*}
where
\begin{equation*}
\gamma: \tau\mapsto\frac{ a\tau + b}{c\tau+d},\ \ \ \gamma=\left(\begin{array}{cc}a & b\\ c & d\end{array}\right)\in\Gamma.
\end{equation*}

Recall that $G=\{1,\sigma\}$ acts on $\mathscr{ M}(1)$ and $\mathscr{M}(\sigma)$ such that $M\to M\circ \sigma.$ We have already known that each $G$-orbit in $\mathscr{M}(1)$ has exactly one module, and each $G$-orbit in $\mathscr{ M}(\sigma)$ has either one or two $\sigma$-twisted modules. Note that if two modules $M^1$ and $M^2$ are in the same $G$-orbit,
then $Z_{M^1}(v,(g,h),\tau)=Z_{M^2}(v,(g,h),\tau)$ for all $v\in V.$ Let ${\cal O}_{\sigma^i}$ be the collection of orbit representations
in $\mathscr{ M}(\sigma^i).$

The following result is essentially obtained in   \cite{DZ1} with suitable modification:
\begin{thm}\label{minvariance} Let  $V$ be a vertex operator superalgebra satisfying the assumptions A1-A2.

(1)  $\{Z_M(v,(g,h),\tau)|M\in {\cal O}_{\sigma g}\}$ is linearly independent.

 (2) There is a representation $\rho_V: \Gamma\to GL(W)$
such that for $g,h\in\{1,\sigma\}$   $\gamma =\left(\begin{array}{cc}a & b\\ c & d\end{array}\right)\in \Gamma,$
and $M\in {\cal O}_{\sigma g},$
$$
Z_{M}|_{\gamma}(v,(g,h),\tau)=\sum_{N\in {\cal O}_{\sigma g^ah^c}} \gamma_{M,N}^{(g,h)} Z_{N}(v,(g^ah^c, g^bh^d), ~\tau)$$
where $\rho(\gamma)=(\gamma_{M,N}^{(g,h)}).$
That is,
$$Z_{M}(v,(g,h),\gamma\tau)=(c\tau+d)^{{\rm wt}[v]}\sum_{N\in {\cal O}_{\sigma g^ah^c}} \gamma_{M,N}^{(g,h)} Z_{N}(v,(g^ah^c, g^bh^d), ~\tau).$$

(3) The  number of $G$-orbits in  $\mathscr{M}(\sigma)$ or the number of inequivalent irreducible  $\sigma$-twisted super modules is equal to the number of inequivalent irreducible $V$-modules.
\end{thm}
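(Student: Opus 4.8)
I want to prove part (3) of Theorem \ref{minvariance}: the number of $G$-orbits in $\mathscr{M}(\sigma)$ equals the number of inequivalent irreducible $V$-modules. The natural strategy is a dimension count on the space $W$ of trace functions, exploiting the fact that the modular representation $\rho_V$ of part (2) forces the various trace function systems to have matching sizes under the $SL_2(\Z)$-action. The key observation is that the $S$-transformation $S = \left(\begin{smallmatrix} 0 & -1 \\ 1 & 0 \end{smallmatrix}\right)$ interchanges the twisted sectors according to the rule $(g,h) \mapsto (h^{-1}, g)$ (reading off the exponents $a=0,b=-1,c=1,d=0$ in part (2)), so it sends the $(\sigma,1)$-sector to the $(1,\sigma)$-sector and vice versa, while fixing the $(1,1)$- and $(\sigma,\sigma)$-sectors as a pair.

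\medskip
\noindent\textbf{Key steps.} First I would identify the relevant bases. By part (1) the functions $\{Z_M(v,(g,h),\tau)\mid M\in \OO_{\sigma g}\}$ are linearly independent for each fixed $(g,h)$, so the cardinality of $\OO_{\sigma g}$ is the dimension of the corresponding block of $W$. Next I would count the untwisted sector: $\mathscr{M}(1)$ indexes the irreducible $V$-modules, and by Lemma \ref{l2.1} every such module is $\sigma$-stable, so $\OO_1 = \mathscr{M}(1)$ has size equal to the number of irreducible $V$-modules, call it $N_V$. The heart of the argument is to count $\OO_\sigma$, the number of $G$-orbits in $\mathscr{M}(\sigma)$: this is exactly $q+1$ in the notation of Section 3, where the $q+1$ orbits correspond to the $q$ non-$\sigma$-stable pairs $\{N^i, N^i_\sigma\}$ (each contributing one orbit of size two) together with the $p-q$ self-stable modules $N^j$ (each an orbit of size one), giving $\OO_\sigma = (q+1)$... more precisely $(q+1)+(p-q) = p+1$ orbit representatives. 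I would then apply the $S$-transformation: since $S$ maps the $(1,\sigma)$-block onto the $(\sigma,1)$-block bijectively and $\rho_V(S)$ is invertible, the dimension of the space spanned by $\{Z_M(v,(1,\sigma),\tau) \mid M \in \OO_1\}$ must equal the dimension spanned by $\{Z_N(v,(\sigma,1),\tau)\mid N\in\OO_\sigma\}$. The subtlety is that the $(1,\sigma)$-functions come from untwisted modules weighted by the action of $\sigma$ (with the $\sqrt 2$ normalization for the non-stable twisted modules built into the target sector), so I would verify that the linear independence in part (1), combined with the invertibility of $\rho_V(S)$, yields exactly the equality $|\OO_\sigma| = |\mathscr{M}(1)| = N_V$.

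\medskip
\noindent\textbf{The main obstacle.} I expect the genuine difficulty to lie in the bookkeeping of the $(1,\sigma)$-sector under the normalizations introduced just before Lemma \ref{lvanish}. The definition of $Z_M(v,(1,\sigma),\tau)$ splits into two cases according to whether $M\circ\sigma\cong M$, with a $1/\sqrt 2$ factor in the non-stable case, and the relation $Z_M(v,(1,\sigma),\tau)=Z_{M\circ\sigma}(v,(1,\sigma),\tau)$ means distinct untwisted modules in the same $G$-orbit collapse to the same function. Since $\OO_1=\mathscr{M}(1)$ there is no collapse on the untwisted side, but I must confirm that the $S$-image of the $(\sigma,1)$-functions lands in the span of the $(1,\sigma)$-functions with the correct multiplicities so that no dimension is lost — this is precisely where the corrected definition (as the excerpt emphasizes, fixing the mistake in \cite{DZ1}) is essential. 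Once the trace functions are placed correctly into the conformal block $\mathcal{C}(1,\sigma)$ and the $S$-matrix is confirmed to act bijectively between the two sectors of equal dimension, the equality of cardinalities follows, and the identification of $G$-orbits in $\mathscr{M}(\sigma)$ with irreducible super $\sigma$-twisted modules (via Lemma \ref{supert} and Theorem \ref{dz1}(3')) completes the count.
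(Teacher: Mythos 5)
Your argument for part (3) is exactly the paper's: the paper cites \cite{DZ1} (with the corrected definition of $Z_M(v,(1,\sigma),\tau)$) for parts (1)--(2), and then, in the paragraph following the theorem, derives (3) precisely as you do --- the invertible matrix $\rho_V(S)$ identifies the $(1,\sigma)$-block of $W$ with the $(\sigma,1)$-block, so by the linear independence in part (1) these blocks have dimensions $|{\cal O}_\sigma|$ and $|{\cal O}_1|=|\mathscr{M}(1)|$ respectively, forcing the two cardinalities to agree, with Lemma \ref{supert} converting orbits into irreducible $\sigma$-twisted super modules. One bookkeeping slip worth noting: by the paper's convention $Z_M(v,(1,\sigma),\tau)$ is indexed by $M\in{\cal O}_{\sigma}$ (the $\sigma$-twisted modules, which is where the $1/\sqrt{2}$ normalization and the orbit collapse live), while $Z_N(v,(\sigma,1),\tau)$ is indexed by $N\in{\cal O}_1=\mathscr{M}(1)$; you have these two reversed, but since $S$ simply interchanges the two blocks the dimension count, and hence your conclusion, is unaffected.
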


Theorem \ref{minvariance} (3) gives a correction of Theorem 8.6 (2) in \cite{DZ1}.  Let ${\cal C}(g,h)$ be the
vector spaces spanned by $Z_M(v,(g,h),\tau)$ for $M\in {\cal O}_{\sigma g}.$ Then by Theorem \ref{minvariance} (2) we know that ${\cal C}(1,\sigma)$ and  ${\cal C}(\sigma,1)$ have the same dimension by using the matrix $ \left(\begin{array}{cc}0 & 1\\ -1 & 0\end{array}\right).$ So ${\cal O}_{\sigma^i}$ have the same cardinality for $i=0,1.$ In particular,
the number of  inequivalent irreducible  $\sigma$-twisted modules  is always greater than or equal to the number
of  inequivalent irreducible   modules. Two numbers are equal if and only if every irreducible $\sigma$-twisted $V$-module
is $\sigma$-stable. This result is different from that in \cite{DLM7} when $V$ is a vertex operator algebra and $g$ is an order 2 automorphism. Moreover, if we replace the irreducible $\sigma$-twisted modules
by the irreducible  $\sigma$-twisted super modules, the result is the same as in the case of  vertex operator algebra.

If $V=V_{\bar 0}$ is a vertex operator algebra then $\rho_V$ is a  unitary representation of $\Gamma$ and the kernel of $\rho_V$ is a congruence subgroup of $\Gamma$ \cite{Z,DLN}.

We use the free fermion as an example to illustrate Theorem \ref{minvariance}.  Let $A(\frac{1}{2}+\Z)$ be an associative algebra
generated by $a(m)$ with $m\in \frac{1}{2}+\Z$ subject to the relation $a(m)a(n)+a(n)a(m)=2\delta_{m+n,0}$ and $A(\frac{1}{2}+\Z)^+$ be the subalgebra generated by $a(m)$ with $m > 0$.  Consider $\C$ as an $A(\frac{1}{2}+\Z)^+$-module with the trivial action $a(m)\cdot 1=0$ for $m > 0$. Then $V(\frac{1}{2}+\Z)=A(\frac{1}{2}+\Z) \ot_{A(\frac{1}{2}+\Z)^+} \C$ is the unique irreducible highest weight $A(\frac{1}{2}+\Z)$-module. As vector spaces, $V(\frac{1}{2}+\Z)$ is isomorphic to the free exterior algebra $\bigwedge[a(m)\,|\, m\le 0]$. It is well known that
$V(\frac{1}{2}+\Z)$ is rational, $C_2$-cofinite vertex operator superalgebra with only one irreducible module $V(\frac{1}{2}+\Z)$ up to isomorphism
\cite{KW} and \cite{L1}. Moreover, $V(\frac{1}{2}+\Z)$ is generated by $a(-1/2)$ such that $Y(a(-1/2),z)=\sum_{n\in \Z}a(n+1/2)z^{-n-1}.$

The vertex operator superalgebra $V(\frac{1}{2}+\Z)$ has two inequivalent irreducible $\sigma$-twisted modules. To construct these two $\sigma$-twisted modules
we need another  associative algebra $A(\Z)$ generated by $a(m)$ with $m\in\Z$ satisfying the relation $a(m)a(n)+a(n)a(m)=2\delta_{m+n,0}.$ Let $A(\Z)^+$ be the  subalgebra of $A(\Z)$  generated by $a(m)$ with $m>0.$
Consider the induced $A(\Z)$-module $V(\Z)=A(\Z)\otimes_{A(\Z)^+}\C$ where $\C$ is $A(\Z)^+$-module such that $a(m)1=0$ for
all $m>0.$  It is easy to see that $V(\Z)$ is isomorphic to $ \bigwedge [a(n)|n\in \Z, n\le 0]$, in which $a(m)$ acts by multiplication if $m \leq 0$ and $a(m)$ acts as $\pm 2\frac{\partial}{\partial a(-m)}$ if $m>0$.  Let $W=\bigwedge[a(m)\,|\, m \in \Z, m < 0]$ and $W=W_{\bar 0}\oplus W_{\bar 1}$
be the decomposition of $W$ into the sum even and odd subspaces.  Then
$$V(\Z)_{\pm}=(1\pm a(0))W_{\bar 0}\oplus (1\mp a(0))W_{\bar 1}.$$
are irreducible $A(\Z)$-submodules of $V(\Z)$ and
$V(\Z)=V(\Z)_{+}\oplus V(\Z)_{-}$. Moreover, $V(\Z)_{\pm}$ are the inequivalent
irreducible $\sigma$-twisted $V(\frac{1}{2}+\Z)$-module such that $Y(a(-1/2),z)=\sum_{n\in\Z}a(n)z^{-n-1/2}$ \cite{L2}, \cite{DZ2}.
It is easy to verify that $V(\Z)_+\circ\sigma$ is isomorphic to $V(\Z)_-.$ Furthermore, $V(\Z)$ is the unique irreducible $\sigma$-twisted super $V(\frac{1}{2}+\Z)$-module,

Next we want to discuss more on the trace functions $Z_M(v,(g,h),\tau).$ We know from the Lemma \ref{lvanish} that
$Z_M(v,(g,h),\tau)=0$ if $\sigma v=-v.$ But we can still  consider $\tr_Mo(v)q^{L(0)-c/24}$ for $M\in  \mathscr{M}(\sigma)$ such that $M$ and $M\circ \sigma$ are not isomorphic, and $v\in V_{\bar 1}.$ In general,  $\tr_Mo(v)q^{L(0)-c/24}$ does not vanish.
But our result does not tell any thing about such   $\tr_Mo(v)q^{L(0)-c/24}.$ Now consider the example $V(\frac{1}{2}+\Z).$ Let $v=a(-1/2)\in V(\frac{1}{2}+\Z)_{\bar 1}.$ Then $\wt[v]=\frac{1}{2}$ and $o(v)=a(0)$ on the twisted module. It is easy to compute that
 $$\tr_{V(\Z)_{\pm}}o(v)q^{L(0)-c/24}=\pm q^{1/24}\prod_{n=1}^{\infty}(1-q^n)$$
which is a modular form of weight $\frac{1}{2}$ over $\Gamma.$ This suggests that for an arbitrary rational vertex operator superalgebra $V,$ an irreducible $\sigma$-twisted module $M$ and $v\in V_{\bar 1},$ $\tr_Mo(v)q^{L(0)-c/24}$ is still a
modular form of weight $\wt[v].$

The following corollary is immediate.
\begin{coro}\label{smatrix}  If  $\gamma=S=\left(\begin{array}{cc}0 & -1\\ 1 & 0\end{array}\right)$ and $v\in V_{\bar 0}$ we
 have:
\begin{equation*}\label{S-tran1}
Z_{M}(v,(1,1),-\frac{1}{\tau})=\tau^{\wt[v]}\sum_{N\in {\cal O}_{\sigma} }S_{M,N}^{(1,1)} Z_{N}(v,(1,1),\tau),
\end{equation*}
\begin{equation*}\label{S-tran1.5}
Z_{M}(v,(1,\sigma),-\frac{1}{\tau})=\tau^{\wt[v]}\sum_{N\in \mathscr{M}(1) }S_{M,N}^{(1,\sigma)} Z_{N}(v,(\sigma,1),\tau)
\end{equation*}
for $M\in {\cal O}_{\sigma},$ and
\begin{equation*}\label{S-tran2}
Z_{N}(v,(\sigma,1),-\frac{1}{\tau})=\tau^{\wt[v]}\sum_{M\in {\cal O}_{\sigma}} S_{N,M}^{(\sigma,1)} Z_{M}(v,(1,\sigma),\tau),
\end{equation*}
\begin{equation*}\label{S-tran3}
Z_{N}(v,(\sigma,\sigma),-\frac{1}{\tau})=\tau^{\wt[v]}\sum_{M\in \mathscr{M}(1) } S_{N,M}^{(\sigma,\sigma)} Z_{M}(v,(\sigma,\sigma),\tau)
\end{equation*}
for any $N\in  \mathscr{M}(1).$
The matrix $\rho(S)=(S_{M,N}^{(g,h)})$ is called $S$-matrix of $V$ and is independent of the choice of vector $v\in V_{\bar 0}.$
\end{coro}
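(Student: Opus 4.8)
The corollary is an immediate specialization of Theorem \ref{minvariance}(2) to the standard generator $S=\left(\begin{array}{cc}0 & -1\\ 1 & 0\end{array}\right)$ of $\Gamma$, so the plan is essentially one of careful bookkeeping rather than new argument. First I would record the data of $\gamma=S$: here $a=0$, $b=-1$, $c=1$, $d=0$, whence $\gamma\tau=-\tfrac1\tau$ and the automorphy factor becomes $(c\tau+d)^{{\rm wt}[v]}=\tau^{{\rm wt}[v]}$. Feeding these into the relation
$$Z_{M}(v,(g,h),\gamma\tau)=(c\tau+d)^{{\rm wt}[v]}\sum_{N\in {\cal O}_{\sigma g^ah^c}} \gamma_{M,N}^{(g,h)} Z_{N}(v,(g^ah^c, g^bh^d), \tau)$$
rewrites the left-hand side as $Z_M(v,(g,h),-\tfrac1\tau)$ and, since $g^ah^c=g^0h^1=h$ and $g^bh^d=g^{-1}h^0=g^{-1}=g$ (using $\sigma^2=1$), turns the right-hand side into $\tau^{{\rm wt}[v]}\sum_{N\in{\cal O}_{\sigma h}}\gamma_{M,N}^{(g,h)}Z_N(v,(h,g),\tau)$.

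Next I would run through the four choices of $(g,h)\in\{1,\sigma\}^2$, using the identification ${\cal O}_{\sigma^2}={\cal O}_1=\mathscr{M}(1)$. The last equality is where the only genuine input beyond Theorem \ref{minvariance}(2) enters: by Lemma \ref{l2.1} every irreducible untwisted $V$-module is $\sigma$-stable, so each $G$-orbit in $\mathscr{M}(1)$ is a singleton and the set of orbit representatives ${\cal O}_1$ may be taken to be all of $\mathscr{M}(1)$. Reading off the cases then gives exactly the four displayed identities: $(g,h)=(1,1)$ produces a sum over ${\cal O}_\sigma$ with twist pair $(1,1)$; $(1,\sigma)$ produces a sum over $\mathscr{M}(1)$ with twist pair $(\sigma,1)$; $(\sigma,1)$ produces a sum over ${\cal O}_\sigma$ with twist pair $(1,\sigma)$; and $(\sigma,\sigma)$ produces a sum over $\mathscr{M}(1)$ with twist pair $(\sigma,\sigma)$. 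Setting $S_{M,N}^{(g,h)}:=\gamma_{M,N}^{(g,h)}$ for $\gamma=S$ identifies these with the entries of $\rho_V(S)$.

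Finally I would dispose of the two qualifications in the statement. The restriction to $v\in V_{\bar 0}$ loses nothing and is in fact forced to be the interesting regime: by Lemma \ref{lvanish} the trace functions vanish identically for $v\in V_{\bar 1}$, while for $v\in V_{\bar 0}$ one has ${\rm wt}[v]\in\Z$, so $\tau^{{\rm wt}[v]}$ is unambiguous. The asserted independence of the $S$-matrix $\rho_V(S)=(S_{M,N}^{(g,h)})$ from the choice of $v$ requires no separate proof here, since it is already built into Theorem \ref{minvariance}(2): $\rho_V$ is a representation into $GL(W)$, so $\rho_V(S)$ is a single fixed operator whose matrix entries $\gamma_{M,N}^{(g,h)}$ are scalars, independent of $v$. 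The only point to watch throughout is the order of the index-set and twist-pair substitutions, which --- because $S$ swaps the two arguments of the twist pair --- is the sole place an error could creep in.
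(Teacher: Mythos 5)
Your proposal is correct and is exactly the derivation the paper has in mind: the paper dismisses the corollary as ``immediate'' from Theorem \ref{minvariance}(2), and your specialization to $a=0$, $b=-1$, $c=1$, $d=0$, the computation $g^ah^c=h$, $g^bh^d=g^{-1}=g$, and the identification ${\cal O}_1=\mathscr{M}(1)$ via Lemma \ref{l2.1} supply precisely the omitted bookkeeping. The closing remarks on $v\in V_{\bar 0}$ and on the $v$-independence of $\rho_V(S)$ are also accurate and consistent with the paper's framework.
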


\begin{rem} If $V_{\bar 1}=0$ then $V=V_{\bar 0}$ is a vertex operator algebra and $\sigma=1.$ In this case, the representation $\rho$ is unitary and the kernel of $\rho$ is a congruence subgroup \cite{DLN}.
\end{rem}

\section{Irreducible $V_{\bar 0}$-modules}

We classify the irreducible $V_{\bar 0}$-modules in this section  and show that any irreducible $V_{\bar 0}$ module occurs in an irreducible $V$-module or an irreducible $\sigma$-twisted module.

Let $\{M^0,\dots , M^p\}$ be inequivalent irreducible $V$-modules with $M^0=V$ and
$$\{N^0,N^0_\s,\dots, N^q,N^q_\s, N^{q+1},\dots,N^{p}\}$$
 be the inequivalent irreducible $\sigma$-twisted $V$-modules such that $N^i$ and $N^i\circ \s$ are equivalent for $i>q.$ Then
$M^i=M^i_{\bar 0}\oplus M^i_{\bar 1}$ and $N^j=N^j_{\bar 0}\oplus N^j_{\bar 1}$ are direct sum of two irreducible $V_{\bar 0}$-modules by Lemmas \ref{l2.1}, \ref{l2.2} for $i=0,\dots ,p$ and $j=q+1,\dots ,p.$

\begin{thm}\label{t4.1}  Let $V$ be a vertex operator superalgebra satisfying the assumptions A1-A2. Then
$$\{M^i_{\bar s},  N^j, N^k_{\bar s}\,|\,i=0,\dots ,p, j=0,\dots, q,k=q+1,\dots ,p, s=0,1\}$$
are inequivalent irreducible $V_{\bar 0}$-modules.
\end{thm}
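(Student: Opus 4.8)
The plan is to record the irreducibility of every listed module from the discussion preceding the statement (Lemmas \ref{l2.1} and \ref{l2.2}), and then to establish the pairwise inequivalences by sorting the pairs into three types, each detected by a different invariant.

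First I would treat inequivalences \emph{within} a single module. Since $\omega\in V_2\subseteq V_{\bar 0}$, the operator $L(0)$ lies in the $V_{\bar 0}$-action, so the set of $L(0)$-eigenvalues is an isomorphism invariant of a $V_{\bar 0}$-module. For an untwisted $M^i$ the even part $M^i_{\bar 0}=\oplus_{n\in\Z_+}M^i(n)$ and the odd part $M^i_{\bar 1}=\oplus_n M^i(n+\tfrac12)$ have conformal weights in two distinct cosets of $\Z$ in $\C$, separated by $\tfrac12$; hence $M^i_{\bar 0}\not\cong M^i_{\bar 1}$. For a $\sigma$-stable twisted module $N^k$ the two summands lie in the \emph{same} $\Z$-coset (recall $T'=1$ here), so the weight argument fails and I would instead quote Lemma \ref{l2.2}, which already asserts that $N^k$ splits into two \emph{inequivalent} irreducible $V_{\bar 0}$-modules.

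Next, for modules coming from distinct $V$-modules of the same twist type, I would use Clifford theory for the $\Z_2$-extension $V_{\bar 0}\subset V$ (as in the proof of Lemma \ref{l2.2}, following \cite{DM}): two irreducible untwisted (resp.\ $\sigma$-twisted) $V$-modules sharing a common irreducible $V_{\bar 0}$-constituent are isomorphic or $\sigma$-conjugate. Every untwisted module is $\sigma$-stable (Lemma \ref{l2.1}), and the $\sigma$-stable twisted modules satisfy $N^k\circ\sigma\cong N^k$, so $\sigma$-conjugacy reduces to isomorphism in these cases; with the assumed inequivalences among $\{M^i\}$ and among $\{N^0,N^0_\sigma,\dots,N^q,N^q_\sigma,N^{q+1},\dots,N^p\}$ this gives $M^i_{\bar s}\not\cong M^{i'}_{\bar{s'}}$ for $i\ne i'$ and eliminates every coincidence among the twisted-sector modules $\{N^j\}_{j\le q}$ and $\{N^k_{\bar s}\}_{k>q}$ arising from \emph{distinct} twisted $V$-modules (the pair $N^k_{\bar 0}$ vs $N^k_{\bar 1}$ having been handled above). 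Here one uses that $N^j$ and $N^j\circ\sigma$ restrict to the \emph{same} $V_{\bar 0}$-module, which is why only $N^j$, and not $N^j_\sigma$, is listed.

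The hard part is the \emph{cross-sector} comparison, ruling out $M^i_{\bar s}\cong N$ with $N$ a $V_{\bar 0}$-constituent of a $\sigma$-twisted module: here conformal weights may genuinely coincide and Clifford theory does not apply across twist types. The plan is to detect the sector through the monodromy of the fermion $V_{\bar 1}$. Because $V=V_{\bar 0}\oplus V_{\bar 1}$ is a $\Z_2$ simple-current extension, $V_{\bar 1}$ is an invertible (order $2$) object of $\CV0$, so its double braiding with any simple $W$ is the scalar $\epsilon(W)=e^{2\pi i(\lambda_{V_{\bar 1}\boxtimes W}-\lambda_{V_{\bar 1}}-\lambda_W)}$, an intrinsic invariant of the $V_{\bar 0}$-module $W$. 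From the untwisted side, $Y_{M^i}$ restricted to $V_{\bar 1}$ is an \emph{integer}-powered intertwining operator of type $\binom{M^i_{\bar{s+1}}}{V_{\bar 1}\;M^i_{\bar s}}$, forcing $\lambda_{V_{\bar 1}\boxtimes M^i_{\bar s}}-\lambda_{V_{\bar 1}}-\lambda_{M^i_{\bar s}}\in\Z$ and $\epsilon(M^i_{\bar s})=+1$; from the twisted side the modes $u_n$ of $u\in V_{\bar 1}$ have $n\in\tfrac12+\Z$, giving a \emph{half-integer}-powered intertwiner and $\epsilon=-1$. An isomorphism $M^i_{\bar s}\cong N$ would then force $\epsilon$ to be $+1$ and $-1$ at once. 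This is precisely the statement (proved later, in Section 8) that $\CV0$ is $\Z_2$-graded with the untwisted and twisted sectors as its two components; the inputs I would need in place at this stage are the braided tensor structure on $\CV0$ (Huang's theorem, valid since $V_{\bar 0}$ is rational, $C_2$-cofinite and of CFT type) and the simple-current property of $V_{\bar 1}$.
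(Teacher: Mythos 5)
Your case analysis is complete and the proof is correct, but for the within-sector inequivalences you take a genuinely different route from the paper. For the pairs $M^i_{\bar s}$ versus $M^{i'}_{\bar s'}$ the paper does not invoke Clifford/Galois duality at all: it introduces the algebras $A_n(V)$ for $n\in\frac{1}{2}\Z_+$, observes that both $A_n(V)$ and $A_{n+\frac{1}{2}}(V)$ are quotients of $A_n(V_{\bar 0})$ and that $A_n(V)=\bigoplus_i\bigoplus_{m\leq n}\End M^i(m)$ by rationality, and reads off the inequivalence of the $M^i_{\bar s}$ from the inequivalence of their homogeneous pieces as $A_n(V_{\bar 0})$-modules; the twisted pairs are handled the same way with $A_{\sigma,n}(V)$ and the argument of \cite{DY}. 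Your substitute --- the $L(0)$-spectrum for $M^i_{\bar 0}$ versus $M^i_{\bar 1}$, together with the statement that non-conjugate irreducible (twisted) $V$-modules have disjoint sets of irreducible $V_{\bar 0}$-constituents --- is legitimate, but that disjointness statement is precisely the dual-pair theorem for the superalgebra extension $V_{\bar 0}\subset V$ and its $\sigma$-twisted sector; it is not off-the-shelf from \cite{DM}, which treats vertex operator algebras and untwisted modules, and in this paper it is itself established through the very Zhu-type algebras $A_{\sigma,n}(V)$ you are trying to bypass. So your route buys a cleaner narrative at the cost of hiding, rather than removing, the technical input. For the cross-sector comparison your argument coincides with the paper's: both rest on $V_{\bar 1}$ being a simple current (the paper obtains this from $\qdim_{V_{\bar 0}}V_{\bar 1}=1$, a forward appeal to Proposition \ref{MTH}, so your reliance on the same fact introduces no new circularity) and on the observation that fusing with $V_{\bar 1}$ shifts the conformal weight by a half-integer on the constituents of $V$-modules and by an integer on the constituents of $\sigma$-twisted modules. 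Your monodromy scalar $\epsilon(W)$ is exactly the exponential of that weight shift, so the braided structure you invoke is decorative; the fusion product and the weights already suffice, which is how the paper phrases it.
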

\begin{proof} We first prove that  $\{M^i_{\bar s}\,|\,i=0,\dots ,p,s=0,1\}$ are inequivalent $V_{\bar 0}$-modules. Following \cite{DLM5}
we can define associative algebras $A_n(V)$ for $n\in\frac{1}{2}\Z_+$ such that $A_0(V)=A(V)$ as defined in \cite{KW} and
both $A_n(V)$ and $A_{n+\frac{1}{2}}(V)$ are the quotient algebras of $A_n(V_{\bar 0})$ for any nonnegative integer
$n.$ Moreover,
$$A_n(V)=\bigoplus_{i=0}^p\bigoplus_{m\leq n}\End M^i(m)$$
as $V$ is rational.
Noting that $M^i_{\bar s}=\oplus_{n\in\Z_+}M^i_{\frac{1}{2}s+n},$ we see immediately that $M^i_{\bar s}$ are inequivalent $V_{\bar 0}$-modules.

 We prove next that $\{ N^j, N^k_{\bar s}\,|\, j=0,\dots, q, k=q+1,\dots ,p, s=0,1\}$ are inequivalent $V_{\bar 0}$-modules.
In this case we need to construct associative algebras $A_{\sigma,n}(V)$  for $n\in\Z_{+}$ following \cite{DLM6} so that $A_{\sigma, 0}(V)=A_{\sigma}(V)$ as defined in \cite{DZ2}.  We can then follow the proof given in \cite{DY} to show that $\{ N^j, N^k_{\bar s}\,|\, j=0,\dots q,k=q+1,\dots ,p, s=0,1\}$ are inequivalent $V_{\bar 0}$-modules.

Finally we prove that any $M^i_{\bar s}$ and $N^j$  or $M^i_{\bar s}$ and $N^k_{\bar t}$  are
not isomorphic.
From Proposition \ref{MTH}, we see that $\qdim_{V_{\bar 0}}V_{\bar 1}=\qdim_VV=1.$  Thus $V_{\bar 1}$ is a simple current \cite{DJX}. This forces $V_{\bar 1}\boxtimes M^i_{\bar s}=M^i_{\overline{ s+1}}$
and  $V_{\bar 1}\boxtimes N^k_{\bar t}=N^k_{\overline{t+1}}$ and   $V_{\bar 1}\boxtimes N^j=N^j$ as $V_{\bar 0}$-modules. Note that
the weight difference between  $M^i_{\bar 0}$ and $M^i_{\bar 1}$ is half integer, and  the weight differences between $N^k_{\bar 0}$ and $N^k_{\bar 1}$ is integer. So any $M^i_{\bar s}$ and $N^k_{\bar t}$ or $M^i_{\bar s}$ and $N^j$ for $i=0,\dots ,p,$ $j=0,\dots ,q,$ $k=q+1,\dots ,p$ and $s,t=0,1$ are
not isomorphic.
\end{proof}

Our next goal is to prove that the irreducible modules given in Theorem \ref{t4.1} is complete.
\begin{thm}\label{t4.2}  Let $V$ be a vertex operator superalgebra satisfying the assumptions A1-A2. Then
$$\{M^i_{\bar s},N^j,  N^k_{\bar s}\, |\,i=0,\dots ,p, j=0,\dots, q,k=q+1,\dots ,p, s=0,1\}$$
is a complete list of  inequivalent irreducible $V_{\bar 0}$-modules.
\end{thm}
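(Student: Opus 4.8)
The plan is to reduce the statement to a single \emph{covering} assertion: every irreducible $V_{\bar 0}$-module occurs as a direct summand of the restriction to $V_{\bar 0}$ of some irreducible (untwisted) $V$-module or some irreducible $\sigma$-twisted $V$-module. Granting this, Lemmas \ref{l2.1} and \ref{l2.2} together with the explicit decompositions recorded in Theorem \ref{t4.1} identify every such summand as one of $M^i_{\bar s}$, $N^j$, or $N^k_{\bar s}$; since Theorem \ref{t4.1} already establishes that these are pairwise inequivalent, the list is then complete. So the work is entirely in proving the covering assertion.

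To organize it I would classify an arbitrary irreducible $V_{\bar 0}$-module $W$ by its behaviour under fusion with the order $2$ simple current $V_{\bar 1}$ (recall $V_{\bar 1}$ is a simple current, as noted in the proof of Theorem \ref{t4.1} via \cite{DJX}). Since $V_{\bar 0}$ is rational and $C_2$-cofinite, the fusion product $\boxtimes$ and the conformal weights on $\CV0$ are available, and $V_{\bar 1}\boxtimes(V_{\bar 1}\boxtimes W)\cong W$, so $W\mapsto V_{\bar 1}\boxtimes W$ is an involution on the set of irreducible $V_{\bar 0}$-modules. If $V_{\bar 1}\boxtimes W\not\cong W$, I would assemble $\hat W=W\oplus(V_{\bar 1}\boxtimes W)$ into an irreducible $V$-module, which is untwisted or $\sigma$-twisted according as the conformal weights of $W$ and $V_{\bar 1}\boxtimes W$ differ by a half-integer or by an integer; this is exactly the integrality of the monodromy of $V_{\bar 1}$ with $W$, and it matches the weight-difference computations in the proof of Theorem \ref{t4.1}, so $W\cong M^i_{\bar s}$, or $W\cong N^k_{\bar s}$ with $k>q$. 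If instead $V_{\bar 1}\boxtimes W\cong W$, then $W$ cannot be a summand of any untwisted $M^i$ or of any $\sigma$-stable $N^k$, since for those fusion with $V_{\bar 1}$ interchanges the two summands; hence $W$ must arise from a non-$\sigma$-stable irreducible $\sigma$-twisted $V$-module, giving $W\cong N^j$ for some $j\le q$.

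The rigorous engine behind these extension claims is the higher Zhu algebra machinery already used for Theorem \ref{t4.1}: following \cite{DLM5,DLM6,DY} one has the associative algebras $A_n(V_{\bar 0})$, the pair $A_n(V),A_{n+1/2}(V)$ (both quotients of $A_n(V_{\bar 0})$), and their $\sigma$-twisted counterparts governing the twisted sector. The content I must extract is that, for $n\gg 0$, every simple module of the semisimple algebra $A_n(V_{\bar 0})$ is a simple module for one of these quotient algebras; equivalently, the restriction functors from the untwisted and the $\sigma$-twisted (super) $V$-module categories are jointly surjective on isomorphism classes of irreducible $V_{\bar 0}$-modules. As a consistency check I would count the candidates: the untwisted sector contributes $2(p+1)$ modules, the $\sigma$-stable twisted modules contribute $2(p-q)$, and the non-$\sigma$-stable twisted orbits contribute $q+1$, and Theorem \ref{minvariance}(3) (the number of $G$-orbits in $\mathscr{M}(\sigma)$ equals the number of irreducible $V$-modules, namely $p+1$) guarantees that these counts are mutually consistent.

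I expect the main obstacle to be precisely this joint surjectivity, and within it the fixed-point case $V_{\bar 1}\boxtimes W\cong W$: one must rule out an irreducible $V_{\bar 0}$-module that extends to neither sector, and must verify that such a $V_{\bar 1}$-fixed $W$ genuinely carries a non-$\sigma$-stable $\sigma$-twisted vertex-operator action rather than only a fusion-categorical fixed-point structure. Showing that the gluing $\hat W$ is a bona fide vertex-algebraic module, with the correct integer versus half-integer powers of $z$ in $Y(v,z)$ for $v\in V_{\bar 1}$, is where the simple-current extension theory (and the $A_{g,n}$ comparison with $A_n(V_{\bar 0})$) must be invoked in full, and is the step I would treat most carefully.
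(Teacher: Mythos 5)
Your proposal takes a genuinely different route from the paper, but as written it has a gap at its center. You correctly reduce the theorem to the covering assertion that every irreducible $V_{\bar 0}$-module occurs inside an irreducible untwisted or $\sigma$-twisted $V$-module, and your sorting of a candidate $W$ by its behaviour under fusion with the simple current $V_{\bar 1}$ is a sensible organizing principle. But the covering assertion \emph{is} the entire content of the theorem, and you do not prove it: in the case $V_{\bar 1}\boxtimes W\not\cong W$ you assert that $W\oplus(V_{\bar 1}\boxtimes W)$ carries a bona fide (un)twisted vertex-algebraic module structure, and in the fixed-point case $V_{\bar 1}\boxtimes W\cong W$ you only show that $W$ cannot come from the other sectors, not that it actually does extend to a non-$\sigma$-stable $\sigma$-twisted module. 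Ruling out an irreducible $W$ that extends to neither sector is exactly the hard step, and you flag it yourself as the step you would ``treat most carefully'' rather than carrying it out. Making this rigorous requires the full induction-functor machinery for the fermionic algebra object $V_{\bar 0}\oplus V_{\bar 1}$ (the identification of non-local modules of the algebra object with $\sigma$-twisted $V$-modules, in the spirit of \cite{CKL} and \cite{CKM}), which you invoke by name but do not execute; the higher Zhu algebra comparison you mention only reproves Theorem \ref{t4.1} (inequivalence of the listed modules), not completeness.

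The paper sidesteps this difficulty entirely with a modular-invariance argument. One writes $Z_{V_{\bar 0}}(v,\tau)=\frac{1}{2}\bigl(Z_V(v,(\sigma,\sigma),\tau)+Z_V(v,(\sigma,1),\tau)\bigr)$ and applies $\tau\mapsto -1/\tau$ using Corollary \ref{smatrix}; the result is an explicit linear combination of the trace functions $Z_{M^i_{\bar r}}$, $Z_{N^j}$, $Z_{N^k_{\bar s}}$ of the modules already listed in Theorem \ref{t4.1}. On the other hand, by \cite{Hu2} the expansion of $\tau^{-\wt[v]}Z_{V_{\bar 0}}(v,-1/\tau)$ over \emph{all} irreducible $V_{\bar 0}$-modules $W$ has every coefficient nonzero (equivalently, $S_{V_{\bar 0},W}\neq 0$ for all $W$, since $S_{V_{\bar 0},W}/S_{V_{\bar 0},V_{\bar 0}}=\qdim_{V_{\bar 0}}W>0$), and by \cite{Z} the trace functions of inequivalent irreducibles are linearly independent in the conformal block. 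Comparing the two expansions forces every irreducible $V_{\bar 0}$-module to appear in the list. If you want to keep your categorical route, you would need to supply the missing extension theorem; otherwise I would recommend the $S$-matrix positivity argument, which is self-contained given Huang's Verlinde formula.
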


\begin{proof}  The main idea in the proof is to use the $S$-matrix for vertex operator algebra $V_{\bar 0}.$ Observe that for $v\in V_{\bar 0},$
$$Z_{V_{\bar 0}}(v,\tau)=\frac{1}{2}(Z_V(v,(\sigma,\sigma),\tau)+Z_V(v,(\sigma,1),\tau)).$$
Thus
$$Z_{V_{\bar 0}}(v,-\frac{1}{\tau})=\frac{1}{2}(Z_V(v,(\sigma,\sigma),-\frac{1}{\tau})+Z_V(v,(\sigma,1),-\frac{1}{\tau}).$$
Using (\ref{S-tran3}) and Theorem \ref{t4.1} we know that
\begin{eqnarray*}
& &Z_V(v,(\sigma,\sigma),-\frac{1}{\tau})=\tau^{\wt [v]}\sum_{i=0}^pS_{V,M^i}^{(\sigma,\sigma)}Z_{M^i}(v,(\sigma,\sigma),\tau)\\
& &\ \ \ \ \ \ =\tau^{\wt [v]}\sum_{i=0}^pS_{V,M^i}^{(\sigma,\sigma)}(Z_{M^i_{\bar 0}}(v,\tau)+Z_{M^i_{\bar 1}}(v,\tau)).
\end{eqnarray*}
By (\ref{S-tran2})  and Theorem \ref{t4.1}
\begin{eqnarray*}
& &Z_V(v,(\sigma,1),-\frac{1}{\tau})=\tau^{\wt [v]}\sum_{M\in {\cal O}_{\sigma}}S_{V,M}^{(\sigma,1)}Z_M(v,(1,\sigma),\tau)\\
& &\ \ \ \ \ \ =\sqrt{2}\tau^{\wt [v]}\sum_{j=0}^qS_{V,N^j}^{(\sigma,1)}Z_{N^j}(v,\tau)+\tau^{\wt [v]}\sum_{j=q+1}^pS_{V,N^j}^{(\sigma,1)}(Z_{N^j_{\bar 0}}(v,\tau)+Z_{N^j_{\bar 1}}(v,\tau)).
\end{eqnarray*}

From \cite{Z},  $Z_{M^i_{\bar r}}(v,\tau),$ $Z_{N^j}(v,\tau),$ $Z_{N^k_{\bar s}}(v,\tau)$ for $i=0,\dots ,p,$ $j=0,\dots ,q$, $k=q+1,\dots , p,$
$r,s=0,1$ are linearly independent vectors in the conformal block of $V_{\bar 0}.$
From \cite{Hu2}, $\tau^{-\wt[v]}Z_{V_{\bar 0}}(v,-\frac{1}{\tau})$ is a linear combination of $Z_W(v,\tau)$ for the irreducible $V_{\bar 0}$-modules $W$
and the coefficient of each  $Z_W(v,\tau)$ in the linear combination is nonzero.  This implies that the list of irreducible $V_{\bar 0}$-modules
in Theorem \ref{t4.1} is complete.
 \end{proof}

\section{The unitarity of $\rho$}

In this section we show that the representation $\rho$ given in Section 4 is unitary. Since the modular group is generated by $S$ and $T=\left(\begin{array}{cc}1&1\\ 0&1\end{array}\right)$, it is good enough to show $\rho_V(S)$ and $\rho_V(T)$ are unitary matrices. Recall that $\rho_{V_{\bar 0}}(S)$ and $\rho_{V_{\bar 0}}(T)$ are the $S$ and $T$ matrices of $V_{\bar 0}.$  The main idea is to use the unitarity of $\rho_{V_{\bar 0}}$ to establish the unitarity of $\rho_V.$ For this purpose we need to
determine the relation between $\rho_V(S)$ and $\rho_{V_{\bar 0}}(S),$ and $\rho_V(T)$ and $\rho_{V_{\bar 0}}(T).$

Recall  that $Z_M(v,\tau)=\tr_Mo(v)q^{L(0)-c/24}$ for an irreducible  $V_{\bar 0}$-module $M$ and $v\in V_{\bar 0}.$ The $S$ and $T$ matrices  of $V_{\bar 0}$ are given defined by
$$Z_M(v,-\frac{1}{\tau})=\tau^{\wt[v]}\sum_{N}S_{M,N}Z_N(v,\tau).$$
$$Z_M(v,\tau+1)=e^{2\pi i(-c/24+\lambda_M)}Z_M(v,\tau)$$
where $N$ runs through the inequivalent irreducible $V_{\bar 0}$-modules, $c$ is the central charge of $V,$ $\lambda_M$ is the lowest weight of $M$.  In particular, the $T$ matrix of $V_{\bar 0}$ is  diagonal with $T_{M,M}=e^{2\pi i(-c/24+\lambda_M)}$
which is a root of unity as both $c$ and $\lambda_M$ are rational \cite{DLM7}.

According to Theorem \ref{t4.2} we have three cases i) $M=M^i_{\bar s}$ for $i=0,\dots ,p$ and $s=0,1$, ii) $M=N^j$ for $j=0,\dots ,q,$ iii) $M=N^k_{\bar s}$ for $k=q+1,\dots ,p$ and $s=0,1.$
We first compute $S_{M^i_{\bar s},N}$ for $i=0,\dots ,p$ and $s=0,1.$ The computation is similar to those given in the proof of Theorem \ref{t4.2} for $v\in V_{\bar 0}:$
\begin{eqnarray*}
&&Z_{M_{\bar s}^i}(v,-\frac{1}{\tau})=\frac{1}{2}(Z_{M^i}(v,(\sigma,\sigma),-\frac{1}{\tau})+(-1)^sZ_{M^i}(v,(\sigma,1),-\frac{1}{\tau}))\\
&&\ \ \ \ =\frac{1}{2}\tau^{\wt[v]}\sum_{j=0}^pS_{M^i,M^j}^{(\sigma,\sigma)}Z_{M^j}(v,(\sigma,\sigma),\tau)+\frac{(-1)^s}{2}\tau^{\wt [v]}\sum_{N\in {\cal O}_{\sigma}}S_{M^i,N}^{(\sigma,1)}Z_N(v,(1,\sigma),\tau)\\
&&\ \ \ \ =\frac{1}{2}\tau^{\wt[v]}\sum_{j=0}^pS_{M^i,M^j}^{(\sigma,\sigma)}(Z_{M^j_{\bar 0}}(v,\tau)+Z_{M^j_{\bar 1}}(v,\tau))\\
&&\ \ \ \ \  \ +\frac{(-1)^s}{\sqrt{2}}\tau^{\wt [v]}\sum_{j=0}^qS_{M^i,N^j}^{(\sigma,1)}Z_{N^j}(v,\tau)+\frac{(-1)^s}{2}\tau^{\wt [v]}\sum_{j=q+1}^pS_{M^i,N^j}^{(\sigma,1)}(Z_{N^j_{\bar 0}}(v,\tau)+Z_{N^j_{\bar 1}}(v,\tau)).
\end{eqnarray*}
The following lemma is immediate.
\begin{lem}\label{l5.1} For $i=0,\dots ,p$ and $s=0,1$ we have

(1) $S_{M^i_{\bar s},M^j_{\bar t}}=\frac{1}{2}S_{M^i,M^j}^{(\sigma,\sigma)}$ for $j=0,\dots ,p$ and $t=0,1,$

(2) $S_{M^i_{\bar s},N^j}=\frac{(-1)^s}{\sqrt{2}}S_{M^i,N^j}^{(\sigma,1)}$ for $j=0,\dots ,q,$

(3)$S_{M^i_{\bar s},N^j_{\bar t}}=\frac{(-1)^s}{2}S_{M^i,N^j}^{(\sigma,1)}$ for $j=q+1,\dots ,p$ and $t=0,1.$
\end{lem}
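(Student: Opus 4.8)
The plan is to read off the $S$-matrix entries $S_{M^i_{\bar s},N}$ of $V_{\bar 0}$ directly from the $S$-transformation formula for $Z_{M^i_{\bar s}}(v,-\frac{1}{\tau})$, by matching coefficients against the linearly independent basis of conformal block vectors. First I would recall that, by the decomposition $M^i = M^i_{\bar 0}\oplus M^i_{\bar 1}$ as $V_{\bar 0}$-modules, the character-type function splits as
\[
Z_{M^i_{\bar s}}(v,\tau)=\tfrac{1}{2}\bigl(Z_{M^i}(v,(\sigma,\sigma),\tau)+(-1)^s Z_{M^i}(v,(\sigma,1),\tau)\bigr)
\]
for $v\in V_{\bar 0}$, since $\sigma$ acts as $(-1)^s$ on $M^i_{\bar s}$ and the $(\sigma,\sigma)$ and $(\sigma,1)$ trace functions record respectively the plain trace and the trace weighted by $\sigma$. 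This is the same device already used in the proof of Theorem \ref{t4.2}, just applied to an arbitrary $M^i$ rather than to $V$ itself.

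Next I would apply $\tau\mapsto -\frac{1}{\tau}$ termwise and substitute the two $S$-transformation formulas from Corollary \ref{smatrix}: the formula (\ref{S-tran3}) rewrites $Z_{M^i}(v,(\sigma,\sigma),-\frac{1}{\tau})$ as a sum over $\mathscr{M}(1)=\{M^0,\dots,M^p\}$ of the $Z_{M^j}(v,(\sigma,\sigma),\tau)$, and the formula (\ref{S-tran2}) rewrites $Z_{M^i}(v,(\sigma,1),-\frac{1}{\tau})$ as a sum over ${\cal O}_\sigma$ of the $Z_N(v,(1,\sigma),\tau)$. I would then re-expand each resulting trace function back in terms of the $V_{\bar 0}$-functions $Z_{M^j_{\bar r}}(v,\tau)$, $Z_{N^j}(v,\tau)$, $Z_{N^j_{\bar r}}(v,\tau)$ using the bookkeeping relations recorded just before Corollary \ref{smatrix}: namely $Z_{M^j}(v,(\sigma,\sigma),\tau)=Z_{M^j_{\bar 0}}(v,\tau)+Z_{M^j_{\bar 1}}(v,\tau)$, and for $N\in{\cal O}_\sigma$ one has $Z_{N^j}(v,(1,\sigma),\tau)=\sqrt{2}\,Z_{N^j}(v,\tau)$ when $j\le q$ (because of the inserted $\sqrt 2$ in the definition of $Z_M(v,(1,\sigma),\tau)$ for non-$\sigma$-stable $M$) and $Z_{N^j}(v,(1,\sigma),\tau)=Z_{N^j_{\bar 0}}(v,\tau)+Z_{N^j_{\bar 1}}(v,\tau)$ when $j>q$. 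This produces exactly the displayed triple sum in the computation preceding the lemma.

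Finally, since by Theorem \ref{t4.2} together with the linear independence result of \cite{Z} the functions $Z_{M^j_{\bar r}}(v,\tau)$, $Z_{N^j}(v,\tau)$, $Z_{N^k_{\bar s}}(v,\tau)$ form a linearly independent family in the conformal block of $V_{\bar 0}$, the coefficients on the two sides of the $V_{\bar 0}$ $S$-transformation $Z_{M^i_{\bar s}}(v,-\frac{1}{\tau})=\tau^{\wt[v]}\sum_N S_{M^i_{\bar s},N}Z_N(v,\tau)$ must agree term by term. Reading off the coefficient of each basis vector then yields the three claimed identities: $S_{M^i_{\bar s},M^j_{\bar t}}=\frac{1}{2}S^{(\sigma,\sigma)}_{M^i,M^j}$ (independent of $t$, coming from the $(\sigma,\sigma)$ sum splitting evenly into $\bar 0$ and $\bar 1$ parts), $S_{M^i_{\bar s},N^j}=\frac{(-1)^s}{\sqrt 2}S^{(\sigma,1)}_{M^i,N^j}$ for $j\le q$ (the $\frac{1}{\sqrt2}$ arising as $\frac{1}{2}\cdot\sqrt2$), and $S_{M^i_{\bar s},N^j_{\bar t}}=\frac{(-1)^s}{2}S^{(\sigma,1)}_{M^i,N^j}$ for $j>q$. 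The work here is almost entirely bookkeeping; the only genuine input is the linear independence, so I do not expect a real obstacle, and the statement follows "immediately" as claimed. The one point demanding care is tracking the $\sqrt 2$ and $\frac12$ normalization factors correctly across the $q\le j$ versus $j>q$ cases, which is precisely where the three formulas differ.
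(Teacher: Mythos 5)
Your proposal is correct and follows essentially the same route as the paper: the authors also write $Z_{M^i_{\bar s}}(v,\tau)=\frac{1}{2}\bigl(Z_{M^i}(v,(\sigma,\sigma),\tau)+(-1)^sZ_{M^i}(v,(\sigma,1),\tau)\bigr)$, apply the two $S$-transformation formulas from Corollary \ref{smatrix}, re-expand via the same bookkeeping relations (including the $\sqrt{2}$ for the non-$\sigma$-stable $N^j$ with $j\le q$), and read off coefficients against the linearly independent conformal-block vectors. Your accounting of the normalization factors and of the $(-1)^s$ sign matches the paper's computation exactly.
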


Next we compute $S_{N^i,N}.$
Since $N^i$  is an irreducible $V_{\bar 0}$-module for $i=0,\dots ,q$ we immediately from Corollary \ref{smatrix} have
\begin{eqnarray*}
&&Z_{N^i}(v,-\frac{1}{\tau})=\frac{1}{\sqrt{2}}Z_{N^i}(v,(1, \sigma),-\frac{1}{\tau})\\
& &\ \ \ \ =\frac{1}{\sqrt{2}}\tau^{\wt[v]}\sum_{j=0}^pS_{N^i,M^j}^{(1,\sigma)}Z_{M^j}(v,(\sigma,1),\tau)\\
& &\ \ \ \ =\frac{1}{\sqrt{2}}\tau^{\wt[v]}\sum_{j=0}^pS_{N^i,M^j}^{(1,\sigma)}(Z_{M^j_{\bar 0}}v,\tau)-Z_{M^j_{\bar 1}}v,\tau)).
\end{eqnarray*}
The discussion above yields
\begin{lem}\label{l5.2} For $i=0,\dots ,q$, $S_{N^i,M^j_{\bar s}}=\frac{(-1)^s}{\sqrt{2}}S_{N^i,M^j}^{(1,\sigma)}$ and $S_{N^i,W}=0$ for the other irreducible $V_{\bar 0}$-modules $W.$
\end{lem}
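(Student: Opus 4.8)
The plan is to read off the entries $S_{N^i,W}$ directly from the $S$-transformation of $Z_{N^i}(v,\tau)$ together with the linear independence of the trace functions $Z_W(v,\tau)$ attached to the irreducible $V_{\bar 0}$-modules $W$. The displayed computation immediately preceding the statement already produces the $S$-transformed expression; what remains is to match coefficients against the defining relation of the $V_{\bar 0}$ $S$-matrix.

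First I would recall from Section \ref{s4} that for $i=0,\dots,q$ the module $N^i$ is not $\sigma$-stable, so the normalization $Z_{N^i}(v,\tau)=\frac{1}{\sqrt 2}Z_{N^i}(v,(1,\sigma),\tau)$ holds. Feeding the $(1,\sigma)$ transformation law of Corollary \ref{smatrix} into the right-hand side gives $Z_{N^i}(v,-1/\tau)=\frac{1}{\sqrt 2}\tau^{\wt[v]}\sum_{j=0}^p S_{N^i,M^j}^{(1,\sigma)}Z_{M^j}(v,(\sigma,1),\tau)$, the sum ranging over all irreducible $V$-modules $M^j\in\mathscr{M}(1)$. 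The key structural input is that the only $V$-modules appearing here are the untwisted $M^j$.

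Second, I would rewrite each $Z_{M^j}(v,(\sigma,1),\tau)$ in terms of $V_{\bar 0}$-characters. By the definition of the trace functions, $(\sigma,1)$ inserts $\sigma$ into the trace over the untwisted module $M^j=M^j_{\bar 0}\oplus M^j_{\bar 1}$, and since $\sigma$ acts by $(-1)^s$ on $M^j_{\bar s}$, one gets $Z_{M^j}(v,(\sigma,1),\tau)=Z_{M^j_{\bar 0}}(v,\tau)-Z_{M^j_{\bar 1}}(v,\tau)$ for $v\in V_{\bar 0}$. This is exactly the last line of the displayed computation and expresses $Z_{N^i}(v,-1/\tau)$ as a $\tau^{\wt[v]}$-multiple of a combination of the $Z_{M^j_{\bar s}}(v,\tau)$ alone, with no $Z_{N^{j'}}$ or $Z_{N^k_{\bar s}}$ terms present.

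Finally, I would compare with the defining relation $Z_{N^i}(v,-1/\tau)=\tau^{\wt[v]}\sum_W S_{N^i,W}Z_W(v,\tau)$, where $W$ runs over the complete list of irreducible $V_{\bar 0}$-modules from Theorem \ref{t4.2}. Because these $Z_W(v,\tau)$ are linearly independent in the conformal block of $V_{\bar 0}$ (as used in the proof of Theorem \ref{t4.2}, following \cite{Z}), the coefficients match term by term: $M^j_{\bar 0}$ and $M^j_{\bar 1}$ receive $\pm\frac{1}{\sqrt 2}S_{N^i,M^j}^{(1,\sigma)}$, giving $S_{N^i,M^j_{\bar s}}=\frac{(-1)^s}{\sqrt 2}S_{N^i,M^j}^{(1,\sigma)}$, while every remaining irreducible $V_{\bar 0}$-module $W$ (the $N^{j'}$ with $j'\le q$ and the $N^k_{\bar s}$ with $k>q$) is absent from the right-hand side, forcing $S_{N^i,W}=0$. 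I expect no serious obstacle; the only points demanding care are invoking linear independence over the \emph{full} module list so that the vanishing assertion is legitimate, and checking that the $\sigma$-inserted trace genuinely splits as the signed difference of the two $V_{\bar 0}$-characters.
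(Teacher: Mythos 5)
Your proposal is correct and follows the paper's own argument essentially verbatim: the paper likewise writes $Z_{N^i}(v,\tau)=\frac{1}{\sqrt 2}Z_{N^i}(v,(1,\sigma),\tau)$, applies the $(1,\sigma)$ transformation from Corollary \ref{smatrix}, expands $Z_{M^j}(v,(\sigma,1),\tau)=Z_{M^j_{\bar 0}}(v,\tau)-Z_{M^j_{\bar 1}}(v,\tau)$, and reads off the coefficients by linear independence of the trace functions over the full list of irreducibles from Theorem \ref{t4.2}. The two points you flag as needing care (linear independence over the complete module list, and the signed splitting of the $\sigma$-inserted trace) are exactly the inputs the paper relies on, so there is nothing further to add.
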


Similarly, we have
\begin{lem}\label{l5.3} For $i=q+1,\dots ,p$ and $s,t=0,1, $ $S_{N^i_{\bar s},M^j_{\bar t}}=\frac{(-1)^t}{2}S_{N^i,M^j}^{(1,\sigma)}$ for $j=0,\dots ,p$ and
$S_{N^i_{\bar s},N^j_{\bar t}}=\frac{(-1)^{s+t}}{2}S_{N^i,N^j}^{(1,1)}$ for $j=q+1,\dots ,p.$

\end{lem}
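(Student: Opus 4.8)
The plan is to mirror the computations already carried out in Lemmas~\ref{l5.1} and~\ref{l5.2}: express the $V_{\bar 0}$-character $Z_{N^i_{\bar s}}(v,\tau)$ (for $v \in V_{\bar 0}$) as a combination of the twisted trace functions attached to $N^i$, apply the $S$-transformation $\tau \mapsto -1/\tau$ through Corollary~\ref{smatrix}, re-expand everything in the basis of $V_{\bar 0}$-characters, and then read off the entries $S_{N^i_{\bar s},W}$ by matching coefficients. Since $i>q$, the module $N^i$ is $\sigma$-stable, so by Lemma~\ref{l2.2} it carries an involution $\sigma$ with $N^i = N^i_{\bar 0}\oplus N^i_{\bar 1}$ the $\pm 1$-eigenspaces. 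Evaluating the defining traces at $\sigma h = 1$ and $\sigma h = \sigma$ gives $Z_{N^i}(v,(1,\sigma),\tau) = Z_{N^i_{\bar 0}}(v,\tau)+Z_{N^i_{\bar 1}}(v,\tau)$ and $Z_{N^i}(v,(1,1),\tau) = Z_{N^i_{\bar 0}}(v,\tau)-Z_{N^i_{\bar 1}}(v,\tau)$, whence
$$Z_{N^i_{\bar s}}(v,\tau) = \tfrac{1}{2}\left(Z_{N^i}(v,(1,\sigma),\tau) + (-1)^s Z_{N^i}(v,(1,1),\tau)\right).$$

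Next I would apply $\tau \mapsto -1/\tau$ to both terms on the right. For the first term, equation~(\ref{S-tran1.5}) of Corollary~\ref{smatrix} rewrites $Z_{N^i}(v,(1,\sigma),-1/\tau)$ as $\tau^{\wt[v]}\sum_{j=0}^{p} S^{(1,\sigma)}_{N^i,M^j}\, Z_{M^j}(v,(\sigma,1),\tau)$, and since $\sigma$ acts by $(-1)^r$ on $M^j_{\bar r}$ we have $Z_{M^j}(v,(\sigma,1),\tau) = Z_{M^j_{\bar 0}}(v,\tau)-Z_{M^j_{\bar 1}}(v,\tau)$. For the second term, equation~(\ref{S-tran1}) rewrites $Z_{N^i}(v,(1,1),-1/\tau)$ as $\tau^{\wt[v]}\sum_{N\in \mathcal{O}_\sigma} S^{(1,1)}_{N^i,N}\, Z_N(v,(1,1),\tau)$, where the summand $Z_N(v,(1,1),\tau)=Z_{N_{\bar 0}}(v,\tau)-Z_{N_{\bar 1}}(v,\tau)$ again uses the eigenspace decomposition of the $\sigma$-stable module $N$.

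Substituting these expansions and comparing with the defining relation $Z_{N^i_{\bar s}}(v,-1/\tau) = \tau^{\wt[v]}\sum_W S_{N^i_{\bar s},W}\,Z_W(v,\tau)$, the linear independence of the complete list of irreducible $V_{\bar 0}$-characters (Theorem~\ref{t4.2}, via \cite{Z,Hu2}) allows a term-by-term match. The $Z_{M^j_{\bar t}}$-coefficient yields $S_{N^i_{\bar s},M^j_{\bar t}}=\frac{(-1)^t}{2}S^{(1,\sigma)}_{N^i,M^j}$ for all $j=0,\dots,p$ and $t=0,1$, while the $Z_{N^j_{\bar t}}$-coefficient yields $S_{N^i_{\bar s},N^j_{\bar t}}=\frac{(-1)^{s+t}}{2}S^{(1,1)}_{N^i,N^j}$ for $j=q+1,\dots,p$; all remaining entries, in particular $S_{N^i_{\bar s},N^j}$ for $j\le q$, come out zero.

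The only genuinely delicate point is the bookkeeping in the $(1,1)$ transformation: the trace function $Z_N(v,(1,1),\tau)$ is defined only for $\sigma$-stable $N$, i.e.\ for $N\in\mathscr{M}(\sigma,\sigma)$, so one must verify that the sum in~(\ref{S-tran1}) effectively ranges only over the representatives $N^j$ with $j>q$ and that no $N^j$ with $j\le q$ contributes. Once this restriction is confirmed, the remainder is the same routine coefficient extraction as in Lemmas~\ref{l5.1} and~\ref{l5.2}.
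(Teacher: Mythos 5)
Your proposal is correct and follows essentially the same route as the paper's own proof: write $Z_{N^i_{\bar s}}(v,\tau)=\frac{1}{2}\bigl(Z_{N^i}(v,(1,\sigma),\tau)+(-1)^sZ_{N^i}(v,(1,1),\tau)\bigr)$, apply Corollary \ref{smatrix} to each term, re-expand in the linearly independent $V_{\bar 0}$-characters, and read off the coefficients. The ``delicate point'' you flag about the $(1,1)$-sum running only over the $\sigma$-stable representatives $N^j$, $j>q$, is exactly how the paper writes that sum (it is forced by the fact that $Z_N(v,(1,1),\tau)$ is only defined for $N\in\mathscr{M}(\sigma,\sigma)$), so there is no gap.
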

\begin{proof} A straightforward calculation using Corollary \ref{smatrix} gives
\begin{eqnarray*}
&&Z_{N_{\bar s}^i}(v,-\frac{1}{\tau})=\frac{1}{2}(Z_{N^i}(v,(1,\sigma),-\frac{1}{\tau})+(-1)^sZ_{N^i}(v,(1,1),-\frac{1}{\tau}))\\
&&\ \ \ \ =\frac{1}{2}\tau^{\wt[v]}\sum_{j=0}^pS_{N^i,M^j}^{(1,\sigma)}Z_{M^j}(v,(\sigma,1),\tau)+\frac{(-1)^s}{2}\tau^{\wt [v]}\sum_{j=q+1}^pS_{N^i,N^j}^{(1,1)}Z_N(v,(1,1),\tau)\\
&&\ \ \ \ =\frac{1}{2}\tau^{\wt[v]}\sum_{j=0}^pS_{N^i,M^j}^{(1,\sigma)}(Z_{M^j_{\bar 0}}(v,\tau)-Z_{M^j_{\bar 1}}(v,\tau))\\
&&\ \ \ \ \  \ +\frac{(-1)^s}{2}\tau^{\wt [v]}\sum_{j=q+1}^pS_{N^i,N^j}^{(1,1)}(Z_{N^j_{\bar 0}}(v,\tau)-Z_{N^j_{\bar 1}}(v,\tau)).
\end{eqnarray*}
The result follows.
\end{proof}

\begin{thm}\label{unitary} The representation $\rho$ given in Theorem \ref{minvariance} is unitary.
\end{thm}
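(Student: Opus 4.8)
The plan is to use that $\Gamma=SL_2(\Z)$ is generated by $S$ and $T$, so it suffices to prove that $\rho_V(S)$ and $\rho_V(T)$ are unitary, and to bootstrap this from the known unitarity of $\rho_{V_{\bar 0}}$, the $S,T$-matrices of the rational vertex operator algebra $V_{\bar 0}$ (unitary by \cite{DLN}). Write $\tilde S=\rho_{V_{\bar 0}}(S)$. By Theorem \ref{t4.2} the entries of $\tilde S$ are indexed by the three types of irreducible $V_{\bar 0}$-modules $M^i_{\bar s}$, $N^j$ ($j\le q$), $N^k_{\bar s}$ ($k>q$), and by Lemmas \ref{l5.1}, \ref{l5.2}, \ref{l5.3} each entry is expressed through the four blocks $S^{(1,1)}$, $S^{(\sigma,\sigma)}$, $S^{(1,\sigma)}$, $S^{(\sigma,1)}$ of $\rho_V(S)$. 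The strategy is to feed $\tilde S^\dagger\tilde S=I$ and $\tilde S\tilde S^\dagger=I$ into these formulas and read off that each of the four blocks is unitary; since by Corollary \ref{smatrix} the matrix $\rho_V(S)$ is block structured (with $S^{(1,1)}$ preserving $\mathcal C(1,1)$, $S^{(\sigma,\sigma)}$ preserving $\mathcal C(\sigma,\sigma)$, and $S^{(1,\sigma)},S^{(\sigma,1)}$ interchanging $\mathcal C(1,\sigma)$ and $\mathcal C(\sigma,1)$), unitarity of all four blocks yields unitarity of $\rho_V(S)$.

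For $\rho_V(T)$ I would argue directly. Under $\tau\mapsto\tau+1$ the label $(g,h)$ is sent to $(g,gh)$, so the four conformal blocks are merely permuted, the twisted module $M$ is unchanged, and each trace function is multiplied by the scalar produced by $q^{L(0)-c/24}$ together with the $\sigma h$-insertion. Because $c$ and every conformal weight are rational by Theorem \ref{grational}(3), and the $\sigma$-insertion compensates the half-integral spacing of the weights to turn this into a genuine scalar, that scalar is a root of unity. Hence I expect $\rho_V(T)$ to be a monomial matrix whose nonzero entries are roots of unity, and therefore unitary.

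The heart of the argument is $\rho_V(S)$, which I would organize by module type. Pairing the two columns of $\tilde S$ indexed by $M^j_{\bar 0},M^j_{\bar 1}$ against those indexed by $M^{j'}_{\bar 0},M^{j'}_{\bar 1}$ and using Lemma \ref{l5.1}, the contributions split into a part carrying $S^{(\sigma,\sigma)}$ (independent of the parities $s,t$) and a part carrying $S^{(\sigma,1)}$ (weighted by $(-1)^{s+t}$); forming the sum and the difference over the parities separates them and gives simultaneously $(S^{(\sigma,\sigma)})^\dagger S^{(\sigma,\sigma)}=I$ and $S^{(\sigma,1)}(S^{(\sigma,1)})^\dagger=I$. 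The analogous computation with the columns indexed by $N^j$ ($j\le q$) and $N^k_{\bar s}$ ($k>q$), using Lemmas \ref{l5.2} and \ref{l5.3} together with the vanishing entries recorded there, yields $(S^{(\sigma,1)})^\dagger S^{(\sigma,1)}=I$ and $(S^{(1,1)})^\dagger S^{(1,1)}=I$; a mirror-image computation with the rows of $\tilde S$ (that is, with $\tilde S\tilde S^\dagger=I$) supplies $S^{(1,\sigma)}(S^{(1,\sigma)})^\dagger=I$. Since the dimensions of the conformal blocks match up under the maps of Corollary \ref{smatrix}, all four blocks are square, so each is unitary; in particular the block interchanging $\mathcal C(1,\sigma)$ and $\mathcal C(\sigma,1)$ (with off-diagonal entries $S^{(\sigma,1)}$ and $S^{(1,\sigma)}$) is unitary precisely because both of these are. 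Assembling the blocks, $\rho_V(S)$ is unitary.

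The main obstacle I anticipate is purely bookkeeping: keeping the normalization factors $\tfrac{1}{\sqrt2}$ and $\tfrac12$ (which originate in the $\sqrt2$ inserted into $Z_M(v,(1,\sigma),\tau)$ for non-$\sigma$-stable $M$) consistent across the three module types, and correctly applying the sum-and-difference-over-parity device to disentangle the blocks $S^{(\sigma,\sigma)}$ and $S^{(\sigma,1)}$ (respectively $S^{(1,1)}$ and $S^{(\sigma,1)}$) that appear superimposed in a single column pairing. A secondary point to check is that the four conformal blocks are genuinely linearly independent, so that $\rho_V(S)$ is honestly block structured in the stated sense; this rests on Theorem \ref{minvariance}(1).
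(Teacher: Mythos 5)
Your proposal is correct and follows essentially the same route as the paper: unitarity of $\rho_V(S)$ is extracted from the unitarity of $\rho_{V_{\bar 0}}(S)$ via the block relations of Lemmas \ref{l5.1}--\ref{l5.3} (the paper states this step in one line, while you spell out the sum-and-difference-over-parity bookkeeping that makes it work), and $\rho_V(T)$ is shown to be a monomial matrix with root-of-unity entries using the half-integral weight spacing and the rationality of $c$ and the conformal weights. No gaps; this matches the paper's argument.
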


\begin{proof} The unitarity of $\rho(S)$ follows from Lemmas \ref{l5.1}-\ref{l5.3} and the unitarity of $S$ matrix of $V_{\bar 0}.$ It remains to show that $\rho(T)$ is unitary. We have
\begin{eqnarray*}
& &Z_{M^i}(v,(\sigma, \sigma), \tau+1)=Z_{M^i_{\bar 0}}(v,\tau+1)+Z_{M^i_{\bar 1}}(v,\tau+1)\\
& &\ \ =e^{2\pi i(-c/24+\lambda_{M^i_{\bar 0}})}Z_{M^i_{\bar 0}}(v,\tau)-e^{2\pi i(-c/24+\lambda_{M^i_{\bar 0}})}Z_{M^i_{\bar 1}}(v,\tau)\\
& &\ \ =e^{2\pi i(-c/24+\lambda_{M^i_{\bar 0}})}Z_{M^i}(v,(\sigma,1),\tau)
\end{eqnarray*}
where we have used the fact that $\lambda_{M^i_{\bar 0}}-\lambda_{M^i_{\bar 1}}+\frac{1}{2}$ is an integer.
Similarly,
$$Z_{M^i}(v,(\sigma,1), \tau+1)=e^{2\pi i(-c/24+\lambda_{M^i_{\bar 0}})}Z_{M^i}(v,(\sigma,\sigma),\tau).$$
It is easy to see that for $i=0,\dots p$ and $j=q+1,\dots ,p$
$$Z_{N^i}(v,(1,\sigma),\tau+1)=e^{2\pi i(-c/24+\lambda_{N^i})}Z_{N^i}(v,(1,\sigma),\tau),$$
$$Z_{N^j}(v,(1,1),\tau+1)=e^{2\pi i(-c/24+\lambda_{N^j})}Z_{N^j}(v,(1,1),\tau).$$
The unitarity of $\rho(T)$ now follows from that fact that  $c$ and $\lambda_{N^i}$ are rational number \cite{DLM7}.
\end{proof}

\section{Quantum dimensions}
We compute the quantum dimensions of the irreducible $\sigma^i$-twisted $V$-modules and irreducible $V_{\bar 0}$-modules in this section.
The ideas and techniques used here come from \cite{DJX} and \cite{DRX}.

Let $V$ be a vertex operator superalgebra as before and $M$ be an irreducible $\sigma^i$-twisted module.
Recall $\chi_M(\tau)$ from Section 4. The quantum dimension of $M$ over $V$ is defined to be
$$\qdim_{V}M=\lim_{y\to 0}\frac{\chi_M(iy)}{\chi_V(iy)}$$
using the relation $q=e^{2\pi i\tau}$
where $y$ is real and positive.

The existence of the quantum dimension for a $g$-twisted $V$-module is given below in terms of the $S$-matrix and the proof is similar to that of Lemma 4.2 of \cite{DJX} by using the $S$-matrix given in Corollary \ref{smatrix}.
\begin{prop}\label{tqdim}
Let $V$  be a vertex operator superalgebra satisfying A1-A2, and  $M$ an irreducible  $\sigma^r$-twisted $V$-module for $r=0,1.$ Then $\qdim_VM=\frac{S_{M,V}^{(\sigma^{1-r},\sigma)}}{\sqrt{2}S_{V,V}^{(\sigma,\sigma)}}$ if $M=N^i$ for $i=0,\dots ,q$ and
$\qdim_VM=\frac{S_{M,V}^{(\sigma^{1-r},\sigma)}}{S_{V,V}^{(\sigma,\sigma)}}$ for other $M.$ In particular, $\qdim_VM$ exists.
\end{prop}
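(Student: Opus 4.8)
The plan is to follow the strategy of Lemma 4.2 of \cite{DJX}: convert the $\tau\to 0$ asymptotics of the characters into $\tau\to i\infty$ asymptotics via the modular $S$-transformation of Corollary \ref{smatrix}, and then read off the dominant (vacuum) contribution. First I would specialize everything to $v=\1$. Since $\wt[\1]=0$ and $o(\1)=\id$, one has $\chi_M(\tau)=Z_M(\1,\tau)$, and by the identities recorded just before Corollary \ref{smatrix} this equals $Z_M(\1,(\sigma,\sigma),\tau)$ when $M$ is untwisted ($r=0$), equals $Z_M(\1,(1,\sigma),\tau)$ when $M=N^k$ is a $\sigma$-stable $\sigma$-twisted module ($r=1$, $k>q$), and equals $\frac{1}{\sqrt 2}Z_M(\1,(1,\sigma),\tau)$ when $M=N^j$ is a non-$\sigma$-stable $\sigma$-twisted module ($r=1$, $j\le q$). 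This is precisely where the labels $(\sigma^{1-r},\sigma)$ and the factor $\frac{1}{\sqrt 2}$ in the statement will originate.

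Next I would set $\tau=iy$ with $y>0$ real, so that $-1/\tau=i/y\to i\infty$ as $y\to 0^+$. Applying the relevant transformation from Corollary \ref{smatrix} (the $(\sigma,\sigma)\to(\sigma,\sigma)$ transform in the untwisted case, the $(1,\sigma)\to(\sigma,1)$ transform in the twisted case), I can write $\chi_M(iy)$ as a finite combination $\sum_{W\in\mathscr{M}(1)}S_{M,W}^{(\sigma^{1-r},\sigma)}\,Z_W(\1,\cdot,\tau)$ of trace functions over the irreducible untwisted modules $W$, now evaluated at $\tau=i/y$. As $\tau\to i\infty$ one has $q=e^{2\pi i\tau}\to 0$ and $Z_W(\1,\cdot,\tau)=q^{\lambda_W-c/24}\big(\tr_{W_{\lambda_W}}(\text{insertion})+O(q^{1/T'})\big)$, where $\lambda_W$ is the conformal weight. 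By assumption A2 every irreducible module other than $V$ has strictly positive conformal weight, while $\lambda_V=0$ and $V_0=\C\1$; moreover $\sigma|_{V_0}=\id$, so the lowest-weight coefficient of $Z_V(\1,(\sigma,1),\tau)$ equals that of $\chi_V$, namely $\tr_{V_0}\sigma=\dim V_0=1$. Hence the finite sum is dominated by the single term $W=V$, giving $\chi_M(iy)\sim c_M\,q^{-c/24}$ with $c_M=S_{M,V}^{(\sigma,\sigma)}$ (untwisted), $c_M=S_{M,V}^{(1,\sigma)}$ ($\sigma$-stable twisted), or $c_M=\frac{1}{\sqrt 2}S_{M,V}^{(1,\sigma)}$ (non-$\sigma$-stable twisted).

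Finally, applying the same analysis to $V$ itself gives $\chi_V(iy)\sim S_{V,V}^{(\sigma,\sigma)}q^{-c/24}$, so that in the ratio the factor $q^{-c/24}$ cancels and $\lim_{y\to 0^+}\chi_M(iy)/\chi_V(iy)$ exists and equals $c_M/S_{V,V}^{(\sigma,\sigma)}$, which is exactly the asserted formula in all three cases. The main point requiring care — and the only genuine obstacle — is the asymptotic isolation of the vacuum term: one must check that no module other than $V$ contributes at the leading order $q^{-c/24}$ (this is where A2 is essential, to force $\lambda_W>0$ for $W\neq V$) and that the leading coefficient of the $\sigma$-inserted trace $Z_V(\1,(\sigma,1),\tau)$ is genuinely nonzero, which holds because $\sigma|_{V_0}=\id$ yields $\tr_{V_0}\sigma=\dim V_0=1\neq 0$. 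Everything else is routine bookkeeping of matching the three module types against the superscript $(\sigma^{1-r},\sigma)$ and the normalization $\frac{1}{\sqrt 2}$.
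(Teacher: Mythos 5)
Your proposal is correct and is exactly the argument the paper intends: the paper's proof consists of the single remark that one applies the $S$-transformations of Corollary \ref{smatrix} with $v=\1$ and argues as in Lemma 4.2 of \cite{DJX}, which is precisely what you carry out, including the correct bookkeeping of the factor $\frac{1}{\sqrt 2}$ for the non-$\sigma$-stable twisted modules and the role of A2 in isolating the vacuum term. The only point left implicit in both your write-up and the paper is that $S_{V,V}^{(\sigma,\sigma)}\neq 0$ (needed for the denominator's leading asymptotics), which follows from $S_{V,V}^{(\sigma,\sigma)}=2S_{V_{\bar 0},V_{\bar 0}}>0$ by Lemma \ref{l5.1} and the positivity of $S_{V_{\bar 0},V_{\bar 0}}$ for the regular vertex operator algebra $V_{\bar 0}$ established in \cite{DJX}.
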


We define the global dimension
$$\glob(V)=\sum_{M\in \mathscr{M}(1)}(\qdim_VM)^2.$$ In the case $V$ is a vertex operator algebra, this is exactly the global dimension of $V$ defined in \cite{DJX} and is equal to $\frac{1}{S_{V,V}^2}.$

We now  compute the quantum dimensions of irreducible $V_{\bar 0}$-modules in terms of quantum dimensions of irreducible $V$-modules. Recall Theorem \ref{t4.2}.
\begin{prop}\label{MTH}  We have

(1) $\qdim_{V_{\bar 0}}M^i_{\bar r}=\qdim_VM^i$  for $i=0,\dots ,p$ and $r=0,1,$

(2) $\qdim_{V_{\bar 0}}N^j=2\qdim_VN^j$ for $j=0,\dots ,q,$

(3) $\qdim_{V_{\bar 0}}N^k_{\bar s}=\qdim_VN^k$ for $k=q+1,\dots ,p$ and $s=0,1,$

(4)  $\glob(V_{\bar 0})=4\glob(V),$

(5) $\sum_{M\in \mathscr{M}(\sigma)}(\qdim_VM)^2=\glob(V),$

(6) $\sum_{X_1}(\qdim_{ V_{\bar 0}}X_1)^2=\sum_{X_2}(\qdim_{ V_{\bar 0}}X_2)^2$ where $X_i$ ranges over the inequivalent irreducible $V_{\bar 0}$-modules appearing in irreducible $\sigma^i$-twisted $V$-modules,

In particular,  $\qdim_{V_{\bar 0}}W=2\qdim_VW$ for any irreducible $\sigma^r$-twisted module $W.$ Moreover, $ \qdim_VM^i,$ $2\qdim_VN^j$ $j=0,\dots ,q$ and $\qdim_VN^k$ for $k=q+1,\dots ,p$
take values in  $\{2\cos\frac{\pi}{n}|n\geq 3\}\cup [2,\infty).$
\end{prop}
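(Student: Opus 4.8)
The engine of the whole statement is the $S$-matrix formula for quantum dimensions of \cite{DJX}: since $V_{\bar 0}$ is a rational, $C_2$-cofinite vertex operator algebra of CFT type, every irreducible $V_{\bar 0}$-module $W$ satisfies $\qdim_{V_{\bar 0}}W = S_{W,V_{\bar 0}}/S_{V_{\bar 0},V_{\bar 0}}$, where $S$ is the (unitary) $S$-matrix of $V_{\bar 0}$. As $V=M^0$, the vacuum of $V_{\bar 0}$ is $M^0_{\bar 0}$, so Lemma \ref{l5.1}(1) gives the positive denominator $S_{V_{\bar 0},V_{\bar 0}}=\tfrac12 S_{V,V}^{(\sigma,\sigma)}$. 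The plan for (1)--(3) is then pure substitution of Lemmas \ref{l5.1}--\ref{l5.3} into this formula, followed by comparison with Proposition \ref{tqdim}. For $W=M^i_{\bar s}$, Lemma \ref{l5.1}(1) gives $S_{M^i_{\bar s},V_{\bar 0}}=\tfrac12 S_{M^i,V}^{(\sigma,\sigma)}$; the halves cancel and the result is $S_{M^i,V}^{(\sigma,\sigma)}/S_{V,V}^{(\sigma,\sigma)}=\qdim_V M^i$, giving (1). For $W=N^j$ ($j\le q$), Lemma \ref{l5.2} gives $S_{N^j,V_{\bar 0}}=\tfrac1{\sqrt2}S_{N^j,V}^{(1,\sigma)}$, hence $\qdim_{V_{\bar 0}}N^j=\sqrt2\,S_{N^j,V}^{(1,\sigma)}/S_{V,V}^{(\sigma,\sigma)}$, which is $2\qdim_VN^j$ once the extra $\sqrt2$ in the denominator of the $N^j$-case of Proposition \ref{tqdim} is accounted for; this is (2). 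For $W=N^k_{\bar s}$ ($k>q$), Lemma \ref{l5.3} gives $S_{N^k_{\bar s},V_{\bar 0}}=\tfrac12 S_{N^k,V}^{(1,\sigma)}$ and the same comparison yields (3).

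The substantive point is (5), which I would obtain from the unitarity of $\rho$ (Theorem \ref{unitary}) together with the block structure of $\rho(S)$. Because $S$ sends the block $(g,h)$ to the block $(h,g)$ (Theorem \ref{minvariance}(2) with $a=d=0$, $b=-1$, $c=1$), the unitary matrix $\rho(S)$ decomposes so that the diagonal blocks $S^{(1,1)}$ and $S^{(\sigma,\sigma)}$ are each unitary, while $S^{(1,\sigma)}$ and $S^{(\sigma,1)}$ are unitary because they occupy the off-diagonal positions of an interchanged pair of blocks; concretely $(S^{(1,\sigma)})^\dagger S^{(1,\sigma)}=I$. Reading off the columns indexed by $V$ of $S^{(\sigma,\sigma)}$ and of $S^{(1,\sigma)}$ gives
\[
\sum_{i=0}^p\bigl(S_{M^i,V}^{(\sigma,\sigma)}\bigr)^2=1,\qquad \sum_{M\in{\cal O}_\sigma}\bigl(S_{M,V}^{(1,\sigma)}\bigr)^2=1,
\]
where the squares may replace the usual $|\cdot|^2$ because each of these entries is a positive real: by Proposition \ref{tqdim} it equals a positive quantum dimension times $S_{V,V}^{(\sigma,\sigma)}>0$. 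Dividing both identities by $\bigl(S_{V,V}^{(\sigma,\sigma)}\bigr)^2$ and re-applying Proposition \ref{tqdim}, the first says $\glob(V)=1/\bigl(S_{V,V}^{(\sigma,\sigma)}\bigr)^2$, while the second, after splitting ${\cal O}_\sigma$ into its $j\le q$ part (where the $\tfrac1{\sqrt2}$ in $\qdim_VN^j$ makes $(\qdim_VN^j)^2$ count with weight $2$, matching the two modules $N^j,N^j_\sigma$ of $\mathscr{M}(\sigma)$) and its $k>q$ part, says $\sum_{M\in\mathscr{M}(\sigma)}(\qdim_VM)^2=1/\bigl(S_{V,V}^{(\sigma,\sigma)}\bigr)^2$. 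The two sums therefore coincide, which is (5).

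Items (4) and (6) are then bookkeeping on the complete list of irreducible $V_{\bar 0}$-modules from Theorem \ref{t4.2}. Splitting $\glob(V_{\bar 0})=\sum_W(\qdim_{V_{\bar 0}}W)^2$ into the contributions of the $M^i_{\bar s}$ and of the $N^j,N^k_{\bar s}$ and substituting (1)--(3), the first group sums to $2\glob(V)$ and the second to $2\sum_{M\in\mathscr{M}(\sigma)}(\qdim_VM)^2$; (5) makes them equal (this is (6)) and adding them gives $\glob(V_{\bar 0})=4\glob(V)$, which is (4). Finally, additivity of $\qdim_{V_{\bar 0}}$ over the decompositions $M^i=M^i_{\bar0}\oplus M^i_{\bar1}$ and $N^k=N^k_{\bar0}\oplus N^k_{\bar1}$, together with (1) and (3) (and (2) directly for the already-irreducible $N^j$), yields $\qdim_{V_{\bar 0}}W=2\qdim_VW$ for every irreducible $\sigma^r$-twisted $W$; and the asserted range $\{2\cos\frac\pi n\mid n\ge3\}\cup[2,\infty)$ is exactly the known range of $\qdim_{V_{\bar 0}}$ on irreducible $V_{\bar 0}$-modules from \cite{DJX}, rewritten through (1)--(3).

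The only genuine obstacle is (5): one must justify that the unitarity of the full representation $\rho$ descends to unitarity of the individual blocks $S^{(\sigma,\sigma)}$ and $S^{(1,\sigma)}$ of $\rho(S)$, and then check the positivity (via Proposition \ref{tqdim}) that legitimizes replacing $|S_{M,V}|^2$ by $S_{M,V}^2$ in the column-norm identities. Everything else is substitution and bookkeeping built on Lemmas \ref{l5.1}--\ref{l5.3}, Proposition \ref{tqdim}, and Theorem \ref{t4.2}.
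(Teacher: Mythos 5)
Your proposal is correct and follows essentially the same route as the paper: the quantum-dimension formula $\qdim_{V_{\bar 0}}W=S_{W,V_{\bar 0}}/S_{V_{\bar 0},V_{\bar 0}}$ combined with Lemmas \ref{l5.1}--\ref{l5.3} and Proposition \ref{tqdim} for (1)--(3), and the unitarity of $\rho(S)$ (equivalently, unit column norms of the blocks $S^{(\sigma,\sigma)}$ and $S^{(1,\sigma)}$ in the column indexed by $V$, with reality of the entries coming from positivity of quantum dimensions) for (4)--(6). The only cosmetic differences are that the paper proves (2) by the character ratio $\lim\chi_V(iy)/\chi_{V_{\bar 0}}(iy)=2$ while explicitly noting your $S$-matrix substitution works equally well, and that it establishes (4) directly from $\glob(V)=1/(S_{V,V}^{(\sigma,\sigma)})^2$ and then deduces (6), whereas you obtain (4) and (6) together from (1)--(3) and (5) by bookkeeping over the list in Theorem \ref{t4.2}.
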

\begin{proof} (1) By Lemma 4.2 of \cite{DJX}, Proposition \ref{tqdim} and Lemma \ref{l5.1} we see that
$$\qdim_{V_{\bar 0}}M^i_{\bar r}=\frac{S_{M^i_{\bar r}, V_{\bar 0}}}{S_{V_{\bar 0}, V_{\bar 0}}}=\frac{S_{M^i,V}^{(\sigma,\sigma)}}{S_{V,V}^{(\sigma,\sigma)}}=\qdim_VM^i.$$

(2) can be proved similarly by using Lemma \ref{l5.2}. But we give a different proof here:
$$\qdim_{V_{\bar 0}}N^j=\lim_{y\to 0}\frac{\chi_{N^j}(iy)}{\chi_{V_{\bar 0}}(iy)}=\lim_{y\to 0}\frac{\chi_{N^j}(iy)}{\chi_V(iy)}\frac{\chi_V(iy)}{\chi_{V_{\bar 0}}(iy)}=\lim_{y\to 0}\frac{\chi_{N^j}(iy)}{\chi_V(iy)}\lim_{y\to 0}\frac{\chi_V(iy)}{\chi_{V_{\bar 0}}(iy)}=2\qdim_VN^j.$$

(3) The proof is similar.

 From \cite{DJX}, the quantum dimensions of irreducible $V_{\bar 0}$-modules lie in  $\{2\cos\frac{\pi}{n}|n\geq 3\}\cup [2,\infty).$ From (1)-(3) we see immediately that
 $ \qdim_VM^i,$ $2\qdim_VN^j$ for $j=0,\dots ,q$ and $\qdim_VN^k$ for $k=q+1,\dots ,p$
take values in  $\{2\cos\frac{\pi}{n}|n\geq 3\}\cup [2,\infty).$

(4) From \cite{DJX} we know that $\glob(V_{\bar 0})=\frac{1}{S_{V_{\bar 0},V_{\bar 0}}^2}$ is positive. This implies that $S_{V_{\bar 0},V_{\bar 0}}$ is a real number. It follows from Proposition \ref{tqdim} that
$S_{V,V}^{(\sigma,\sigma)}$ is a real number. Since $\qdim_VM$ is always positive for any irreducible $\sigma^r$-twisted $V$-module $M$, we see from Proposition \ref{tqdim} again that $S_{M,V}^{(\sigma^{1-r},\sigma)}$ is a real number.

Using  Proposition \ref{tqdim} and Theorem \ref{unitary} yields
$$\glob(V)=\sum_{i=0}^p(\qdim_VM^i)^2=\sum_{i=0}^p(\frac{S_{M^i,V}^{(\sigma,\sigma)}}{S_{V,V}^{(\sigma,\sigma)}})^2=(\frac{1}{S_{V,V}^{(\sigma,\sigma)}})^2\sum_{i=0}^p(S_{M^i,V}^{(\sigma,\sigma)})^2=
(\frac{1}{S_{V,V}^{(\sigma,\sigma)}})^2.$$
By Lemma \ref{l5.1}, $S_{V_{\bar 0}V_{\bar 0}}=\frac{1}{2}S_{V,V}^{(\sigma,\sigma)}.$ It follows immediately that $\glob(V_{\bar 0})=4\glob(V).$

(5)  Again by Proposition \ref{tqdim} and Theorem \ref{unitary}
$$\sum_{M\in \mathscr{M}(\sigma)}(\qdim_VM)^2=\frac{2}{(S_{V,V}^{(\sigma,\sigma)})^2}\sum_{i=0}^q(\frac{S_{N^i,V}^{(1,\sigma)}}{\sqrt{2}})^2+
(\frac{1}{S_{V,V}^{(\sigma,\sigma)}})^2\sum_{i=q+1}^p(S_{N^i,V}^{(1,\sigma)})^2=(\frac{1}{S_{V,V}^{(\sigma,\sigma)}})^2.$$

(6) Note from (1) that
$$\sum_{i=0}^p\sum_{s=0}^1(\qdim_{V_{\bar 0}}M^i_{\bar s})^2=\sum_{i=0}^p2(\qdim_{V}M^i)^2=2\glob(V).$$
 The result follows now from (4) and Theorem \ref{t4.2}.
\end{proof}

\section{$\BZ_2$-grading on the category  of $V_{\bar 0}$-modules}

For the discussion below, we introduce
several module categories. We use ${\cal C}_V$ and ${\cal C}_V^\s$ to denote the $V$-module category and $\sigma$-twisted $V$-module category, respectively. Since $\s|_{V_{\bar 0}}=\id_{V_{\bar 0}}$, the objects in $\CC_V$ and ${\cal C}_V^\s$ are $V_{\bar 0}$-modules. We  denote by ${\cal C}_{V_{\bar 0}}^r$
the full abelian subcategory of ${\cal C}_{V_{\bar 0}}$ generated by the simple $V_{\bar 0}$-submodules of any $\sigma^r$-twisted $V$-modules.  Let ${\cal C}$ be any of these categories, the dimension
of ${\cal C}$ is defined as $\dim {\cal C}=\sum_{M}(\qdim M)^2$ where $M$ runs over the equivalence classes of  simple objects in category ${\cal C}.$ It is clear that $\glob(V)=\dim{\cal C}_V,$  $\glob(V_{\bar 0})=\dim{\cal C}_{V_{\bar 0}}=\dim{\cal C}_{V_{\bar 0}}^0+\dim{\cal C}_{V_{\bar 0}}^1.$ From discussion in Section 7, we know that $\glob(V_{\bar 0})=4\,\glob(V),$ $\dim {\cal C}_V=\dim {\cal C}_V^\s,$ $\dim{\cal C}_{V_{\bar 0}}^0
=\dim{\cal C}_{V_{\bar 0}}^1.$ From \cite{Hu2}, ${\cal C}_{V_{\bar 0}}$ is a modular tensor category.

\begin{thm}\label{th8.1} The category ${\cal C}_{V_{\bar 0}}^0$ is a fusion subcategory of ${\cal C}_{V_{\bar 0}}$ with a complete list of simple objects given by $M_{\bar r}^j$, with $j =0, \dots, p$ and $r=0,1$.
\end{thm}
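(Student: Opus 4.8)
The plan is to realize $\CV0^0$ as the trivial component of a $\Z_2$-grading of the modular tensor category $\CV0$ coming from the monodromy with the order-two simple current $V_{\bar 1}$. Recall from the proof of Theorem~\ref{t4.1} that $\qdim_{V_{\bar 0}}V_{\bar 1}=1$, so $V_{\bar 1}$ is an invertible object with $V_{\bar 1}\boxtimes V_{\bar 1}=V_{\bar 0}$, and that fusion with $V_{\bar 1}$ permutes the simple objects by $V_{\bar 1}\boxtimes M^i_{\bar s}=M^i_{\overline{s+1}}$, $V_{\bar 1}\boxtimes N^j=N^j$, and $V_{\bar 1}\boxtimes N^k_{\bar t}=N^k_{\overline{t+1}}$. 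By Theorem~\ref{t4.2} these exhaust the simple objects of $\CV0$, and by definition the simple objects of $\CV0^0$ are exactly the $M^i_{\bar r}$. Hence the assertion amounts to showing that the full subcategory spanned by the $M^i_{\bar r}$ is closed under $\boxtimes$ and duals and contains the unit $V_{\bar 0}=M^0_{\bar 0}$.

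First I would introduce the monodromy scalar. Since $V_{\bar 1}$ is invertible, $V_{\bar 1}\boxtimes X$ is simple for each simple $X$, so the double braiding $c_{X,V_{\bar 1}}c_{V_{\bar 1},X}$ is an endomorphism of a simple object and hence a scalar $\mu(X)$. The balancing axiom gives the formula $\mu(X)=\theta_{V_{\bar 1}\boxtimes X}\,\theta_{V_{\bar 1}}^{-1}\theta_X^{-1}$, where on a simple $V_{\bar 0}$-module the ribbon twist is $\theta_X=e^{2\pi i h_X}$, with $h_X$ the conformal weight; this is a genuine scalar because every $V_{\bar 0}$-module is graded by $h_X+\Z_{\ge 0}$. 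Two standard facts about an invertible object then apply: $\mu$ is multiplicative, i.e. $\mu(Z)=\mu(X)\mu(Y)$ for every simple summand $Z$ of $X\boxtimes Y$ (from naturality of the braiding and the hexagon axioms), and $\mu(X)^2=1$ since it equals the monodromy with $V_{\bar 1}\boxtimes V_{\bar 1}=V_{\bar 0}$. Thus $X\mapsto\mu(X)\in\{\pm1\}$ is a $\Z_2$-valued character of the fusion ring, giving a grading of $\CV0$ whose trivial component is a fusion subcategory.

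It remains to evaluate $\mu$ on the three families, using $\theta_{V_{\bar 1}}=-1$ (as $h_{V_{\bar 1}}\in\frac12+\Z$ by CFT type) and the weight-difference facts recorded in the proof of Theorem~\ref{t4.1}. For $X=M^i_{\bar 0}$ we have $V_{\bar 1}\boxtimes X=M^i_{\bar 1}$ with $h_{M^i_{\bar 1}}-h_{M^i_{\bar 0}}\in\frac12+\Z$, so $h_{M^i_{\bar 1}}-h_{V_{\bar 1}}-h_{M^i_{\bar 0}}\in\Z$ and $\mu(M^i_{\bar r})=1$. For $X=N^j$ we have $V_{\bar 1}\boxtimes N^j=N^j$, whence $\mu(N^j)=\theta_{V_{\bar 1}}^{-1}=-1$; and for $X=N^k_{\bar s}$, since $h_{N^k_{\bar 1}}-h_{N^k_{\bar 0}}\in\Z$, the exponent becomes a half-integer and $\mu(N^k_{\bar s})=-1$. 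Therefore $\{X:\mu(X)=1\}$ is exactly the set of $M^i_{\bar r}$, so the trivial component of the grading coincides with $\CV0^0$; as such it is closed under $\boxtimes$, contains the unit, and is closed under duals (duality sends a homogeneous object of degree $d$ to one of degree $-d$, which over $\Z_2$ is again trivial). This yields the theorem, with $M^j_{\bar r}$ ($j=0,\dots,p$, $r=0,1$) as the complete list of simple objects. The only genuinely case-specific input is the weight-parity computation; the conceptual point to handle with care is the abstract grading fact, namely that the monodromy charge of an invertible object is additive under fusion, which I would justify via the hexagon axioms rather than rederive from the vertex-operator definitions, together with the verification that the categorical twist really equals $e^{2\pi i h_X}$.
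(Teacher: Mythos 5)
Your proof is correct, but it takes a genuinely different route from the paper. The paper proves closure of $\CC_{V_{\bar 0}}^0$ under $\boxtimes$ by a direct Verlinde-formula computation: it writes $\qdim(M^i_{\bar r})\qdim(M^j_{\bar s})=\sum_W N^W_{M^i_{\bar r},M^j_{\bar s}}\qdim(W)$, and then uses the explicit $S$-matrix entries of $\CC_{V_{\bar 0}}$ computed in Lemmas 6.1--6.3 together with the unitarity of $\rho(S)$ (Theorem 6.4) to show that the partial sum over the untwisted-sector simples $M^k_{\bar t}$ already equals the full product; positivity of quantum dimensions then forces all fusion coefficients into the twisted sector to vanish. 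You instead define the monodromy charge $\mu(X)=\theta_{V_{\bar 1}\boxtimes X}\theta_{V_{\bar 1}}^{-1}\theta_X^{-1}$ of the order-two simple current $V_{\bar 1}$, observe via the hexagon and balancing axioms that $\mu$ is a $\{\pm1\}$-valued character of the fusion ring, and evaluate it on the three families of simples using exactly the fusion rules $V_{\bar 1}\boxtimes M^i_{\bar s}=M^i_{\overline{s+1}}$, $V_{\bar 1}\boxtimes N^j=N^j$, $V_{\bar 1}\boxtimes N^k_{\bar t}=N^k_{\overline{t+1}}$ and the weight-parity facts already recorded in the proof of Theorem 5.1, plus $\theta_X=e^{2\pi i h_X}$ (which the paper itself invokes in Lemma 9.1). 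All the inputs you use are established in the paper, and the abstract facts (monodromy with an invertible object is a scalar on simples, multiplicative under fusion, and squares to $1$ when the invertible object has order two) are standard. Your approach is more conceptual and shorter, immediately yields the full $\BZ_2$-grading that the paper only sketches in the remark following the theorem, and generalizes to any order-two simple current; the paper's computation is heavier but stays entirely within the trace-function/$S$-matrix machinery it has already built in Sections 4--7, and does not need to import the categorical identification of the twist or the multiplicativity of the monodromy charge. The one point you rightly flag as needing care --- that the categorical ribbon twist on a simple $V_{\bar 0}$-module is $e^{2\pi i h_X}$ --- is exactly the external input your argument requires beyond the paper's Sections 4--7, and it is legitimate since the paper uses the same identification later.
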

\begin{proof} We need to show that $M^i_{\bar r}\boxtimes M^j_{\bar s}$ for $i,j=0,\dots ,p$ and $r,s=0,1$ lies in ${\cal C}_{V_{\bar 0}}^0.$ This is equivalent to that the fusion rules $N_{M^i_{\bar r},M^j_{\bar s}}^W=0$
for $W=N^k$ for $k=0,\dots ,q$ or $W=N^k_{\bar t}$ for $k=q+1,\dots ,p$ and $t=0,1.$

From Proposition 4.9 of \cite{DJX} or the Verlinde formula for modular tensor categories,
$$\qdim_{V_{\bar 0}}M^i_{\bar r}\qdim_{V_{\bar 0}}M^j_{\bar s}=\qdim_{V_{\bar 0}}M^i_{\bar r}\boxtimes M^j_{\bar s}=\sum_{W}N_{M^i_{\bar r},M^j_{\bar s}}^{W}\qdim_{V_{\bar 0}}W$$
 where $W$ ranges over the inequivalent irreducible $V_{\bar 0}$-modules. By the assumption of $V$, the quantum dimensions are positive.
Our idea is to establish
$$\qdim_{V_{\bar 0}}M^i_{\bar r}\qdim_{V_{\bar 0}}M^j_{\bar s}=\sum_{k=0}^p\sum_{t=0,1}N_{M^i_{\bar r},M^j_{\bar s}}^{M^k_{\bar t}}\qdim_{V_{\bar 0}}M^k_{\bar t}$$
which implies that $N_{M^i_{\bar r},M^j_{\bar s}}^W=0$ if $W$ is not any $M^k_{\bar t}.$

Recall from \cite{Hu2} the Verlinde formula
$$N_{M^i_{\bar r},M^j_{\bar s}}^{M^k_{\bar t}}=\sum_{W}\frac{S_{M^i_{\bar r},W}S_{M^j_{\bar s},W}\overline{S}_{W,M^k_{\bar t}}}{S_{V_{\bar 0},W}}$$
where $W$ ranges through the inequivalent irreducible $V_{\bar 0}$-modules.
Thus
\begin{eqnarray*}
& &\sum_{k=0}^p\sum_{t=0,1}N_{M^i_{\bar r},M^j_{\bar s}}^{M^k_{\bar t}}\qdim_{V_{\bar 0}}M^k_{\bar t}\\
& &=\sum_{k=0}^p\sum_{t=0,1}\sum_W\frac{S_{M^i_{\bar r},W}S_{M^j_{\bar s},W}\overline{S}_{W,M^k_{\bar t}}}{S_{V_{\bar 0},W}}\frac{S_{M^k_{\bar t},V_{\bar 0}}}{S_{V_{\bar 0},V_{\bar 0}}}
\end{eqnarray*}

We claim that $\sum_{k=0}^p\sum_{t=0,1}\ol S_{W,M^k_{\bar t}}S_{M^k_{\bar t},V_{\bar 0}}$ is  $0$ if $W\ne V_{\bar 0}, V_{\bar 1}$ and  is $\frac{1}{2}$ otherwise.
Note that $\overline{S}_{M^k_{\bar t},W}=S_{W', M^k_{\bar t}}$. Let $W=V_{\bar a}$. By Lemma \ref{l5.1} we have $S_{M^k_{\bar t},V_{\bar a}}=\frac{S_{M^k,V}^{(\sigma,\sigma)}}{2}$ for $a=0,1$. Using the unitarity of $\rho(S)$ Theorem \ref{unitary} gives the claim
$$\sum_{k=0}^p\sum_{t=0,1}\overline{S}_{W,M^k_{\bar t}}S_{M^k_{\bar t},V_{\bar 0}}=\frac{1}{2}\sum_{k=0}^p(S_{M^k,V}^{(\sigma,\sigma)})^2=\frac{1}{2}.$$
The proof for $W=M^k_{\bar t}$ with $k>0$ is similar. For $W=N^i, N^j_{\bar s}$, where $i = 0,\dots, q$, $j=q+1,\dots, p$ and $s=0,1$, the claim follows immediately from Lemmas 6.2 and 6.3.

Finally we have
$$\sum_{k=0}^p\sum_{t=0,1}\sum_W\frac{S_{M^i_{\bar r},W}S_{M^j_{\bar s},W}\overline{S}_{W,M^k_{\bar t}}}{S_{V_{\bar 0},W}}\frac{S_{M^k_{\bar t},V_{\bar 0}}}{S_{V_{\bar 0},V_{\bar 0}}}=\frac{1}{2}\frac{S_{M^i_{\bar r},V_{\bar 0}}S_{M^j_{\bar s},V_{\bar 0}}}{S_{V_{\bar 0},V_{\bar 0}}S_{V_{\bar 0},V_{\bar 0}}}+\frac{1}{2}\frac{S_{M^i_{\bar r},V_{\bar 1}}S_{M^j_{\bar s},V_{\bar 1}}}{S_{V_{\bar 0},V_{\bar 1}}S_{V_{\bar 0},V_{\bar 0}}}.$$
Since $S_{M^i_{\bar r},V_{\bar 1}}=S_{M^i_{\bar r},V_{\bar 0}}$ we see that
$$\sum_{k=0}^p\sum_{t=0,1}N_{M^i_{\bar r},M^j_{\bar s}}^{M^k_{\bar t}}\qdim_{V_{\bar 0}}M^k_{\bar t}=\frac{S_{M^i_{\bar r},V_{\bar 0}}}{S_{V_{\bar 0},V_{\bar 0}}}\frac{S_{M^j_{\bar s},V_{\bar 0}}}{S_{V_{\bar 0},V_{\bar 0}}}
=\qdim_{V_{\bar 0}}M^i_{\bar r}\qdim_{V_{\bar 0}}M^j_{\bar s},$$
as desired. \end{proof}

\begin{rem} Similarly, one can show that  if $M\in {\cal C}_{V_{\bar 0}}^r,$  $N\in {\cal C}_{V_{\bar 0}}^s$ then $M\boxtimes N\in {\cal C}_{V_{\bar 0}}^{r+s}$ where $r+s$ is understood to be modulo $2.$ Therefore, $\CC_{V_{\bar 0}}$ is $\BZ_2$-graded.
\end{rem}

\section{The 16-fold way}

We discuss in this section on how the representation theory for vertex operator superalgebra is related to the 16-fold way conjecture proposed in \cite{BGHNPRW}.

Let $U$ be a rational, $C_2$-cofinite, simple vertex operator algebra of CFT type such that the weight of any irreducible $U$-module is positive except $U$ itself. Then the $U$-module
category ${\cal C}_U$ is a modular tensor category \cite{Hu2} with positive quantum dimensions. As usual, let $c_{M,N}: M\boxtimes N\to N\boxtimes M$ be the braiding for $U$-modules $M,N.$ Let $\theta$ denote ribbon structure on $\CC_U$. Then $\theta_M$ is a scalar multiple of $\id_M$ for any simple $U$-module $M$. We use the abuse notation $\theta_M$ to denote such scalar. A simple $U$-module $F$ is called a \emph{fermion} if $F$ is a simple current (or invertible object of $\CC_U$) of order 2 and $c_{F,F}=-\id_{F \boxtimes F}$. Since $\qdim_U F =1$, $\theta_F=-1$.

\begin{lem}\label{l9.1} Let $V=V_{\bar 0}\oplus V_{\bar 1}$ be a vertex operator superalgebra satisfying assumptions A1-A2 with $V_{\bar 1}\ne 0.$ Then $V_{\bar 1}$ is a fermion of $\CC_{V_{\bar 0}}$.
\end{lem}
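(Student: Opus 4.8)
The plan is to verify directly the three conditions in the definition of a fermion for the simple object $V_{\bar 1}$ of $\CC_{V_{\bar 0}}$: that it is invertible (a simple current), that it has order $2$, and that its self-braiding equals $-\id$. Throughout I would work inside the modular tensor category $\CC_{V_{\bar 0}}$ supplied by \cite{Hu2}, whose unit object is $V_{\bar 0}$ and whose ribbon twist is the one recalled in this section.

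First I would settle invertibility and order. Since $V_{\bar 0}$ is simple, $V_{\bar 1}$ is a simple $V_{\bar 0}$-module (as noted in Section 2), and it is identified with $M^0_{\bar 1}$ where $M^0 = V$. Proposition \ref{MTH}(1) then gives $\qdim_{V_{\bar 0}}V_{\bar 1} = \qdim_V V = 1$, so by the quantum-dimension characterization of simple currents \cite{DJX} the object $V_{\bar 1}$ is invertible in $\CC_{V_{\bar 0}}$. For the order, I would invoke the fusion rule already established in the proof of Theorem \ref{t4.1}, namely $V_{\bar 1}\boxtimes M^i_{\bar s} = M^i_{\overline{s+1}}$; specializing to $M^i = M^0 = V$ and $s=1$ yields $V_{\bar 1}\boxtimes V_{\bar 1} = V_{\bar 0}$, the unit. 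Because the $L(0)$-eigenvalues on $V_{\bar 1}$ lie in $\frac12+\Z$ while those on $V_{\bar 0}$ lie in $\Z$, we have $V_{\bar 1}\not\cong V_{\bar 0}$, so the order is exactly $2$.

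The remaining and most delicate point is $c_{V_{\bar 1},V_{\bar 1}} = -\id_{V_{\bar 1}\boxtimes V_{\bar 1}}$. Here the key input is the twist: since $\theta_M = e^{2\pi i L(0)}$ and all $L(0)$-eigenvalues on $V_{\bar 1}=\bigoplus_{n\in\frac12+\Z}V_n$ are congruent modulo $\Z$ to $\frac12$, the operator $e^{2\pi i L(0)}$ acts on $V_{\bar 1}$ as the single scalar $e^{\pi i} = -1$, so $\theta_{V_{\bar 1}} = -\id$. To pass from the twist to the braiding I would use that for an invertible object $f$ with $f\boxtimes f$ simple, the self-braiding $c_{f,f}$ is a scalar, and taking its partial categorical trace together with $\qdim f = 1$ identifies this scalar with $\theta_f$ (this is exactly the statement, anticipated in the Remark defining a fermion, that $\theta_F = c_{F,F}$ for invertible $F$). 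Applying this to $f = V_{\bar 1}$ gives $c_{V_{\bar 1},V_{\bar 1}} = \theta_{V_{\bar 1}} = -\id$, completing the verification.

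I expect the sign determination in the last step to be the main obstacle: the balancing axiom by itself only forces $c_{V_{\bar 1},V_{\bar 1}}^2 = \theta_{V_{\bar 1}}^{-2} = \id$, leaving the sign ambiguous, and resolving it genuinely requires both the explicit conformal-weight computation of $\theta_{V_{\bar 1}}$ and the identity $\theta_f = c_{f,f}$ valid for invertible objects (equivalently, that a self-dual invertible object has Frobenius--Schur indicator $+1$). An alternative, more vertex-algebraic route would derive the same sign directly from the super-commutativity of the vertex operators $Y(u,z)$ for $u\in V_{\bar 1}$, viewing $Y$ as an intertwining operator of type $\binom{V_{\bar 0}}{V_{\bar 1}\,V_{\bar 1}}$; the odd--odd sign $(-1)^{\tilde u\tilde v}=-1$ in the Jacobi identity is precisely what the Huang--Lepowsky braiding records on this fusion channel.
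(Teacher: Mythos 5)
Your proposal is correct and follows essentially the same route as the paper: the paper also establishes that $V_{\bar 1}$ is a simple current of order $2$ (via $\qdim_{V_{\bar 0}}V_{\bar 1}=\qdim_V V=1$ and the fusion rules from Theorem \ref{t4.1}) and then computes $c_{V_{\bar 1},V_{\bar 1}}=\theta_{V_{\bar 1}}\id=e^{2\pi i L(0)}\id=-\id$ from the half-integral weights of $V_{\bar 1}$, invoking the identity $c_{F,F}=\theta_F\id$ for an invertible object of quantum dimension $1$ exactly as you do. Your write-up merely makes explicit the partial-trace justification of that identity, which the paper delegates to \cite{Hu2} and \cite{BGHNPRW}.
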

\begin{proof} Clearly, $V_{\bar 1}$ is a simple current of order 2. Then
$$c_{V_{\bar 1}, V_{\bar 1}}=\theta_{V_{\bar 1}}\id_{V_{\bar 1}\boxtimes V_{\bar 1}}=e^{2\pi i L(0)}\id_{V_{\bar 1}\boxtimes V_{\bar 1}}=-\id_{V_{\bar 1}\boxtimes V_{\bar 1}}$$
from \cite{Hu2} and  \cite{BGHNPRW} as $V_{\bar 1}=\oplus_{n\in\Z}V_{\frac{1}{2}+n}.$
\end{proof}

Conversely, if $U$ is as before and an $U$-module $F$ is a fermion, then $V=U\oplus F$ has a structure of a vertex operator superalgebra such that $V_{\bar 0}=U$
and $V_{\bar 1}=F$ by Theorem 1.1 of \cite{CKL}. Therefore, the vertex operator superalgebra $V=V_{\bar 0} \oplus V_{\bar 1}$ in our sense is completely determined by a fermion $V_{\bar 1}$ in $\CC_{V_{\bar 0}}$.

Let $\BB$ be a braided fusion category. For any family $\DD$ of objects in $\BB$,  the M\"uger centralizer $C_{\BB}(\DD)$ is the full subcategory of $\BB$ consisting of the objects $Y$ in $\BB$ such that $c_{Y,X}\circ  c_{X,Y} = \id_{X\boxtimes Y}$  for any $X$ in $\DD.$ The subcategory $C_\BB(\DD)$ is closed under the tensor product of $\BB$ and hence a braided fusion subcategory of $\BB$. The symmetric fusion category $C_{\BB}(\BB)$ is call the M\"uger center of $\BB$ and denoted by $\ZZ_2(\BB)$.  In this paper, a pseudounitary braided fusion category $\BB$ is called \emph{super-modular} if  $\ZZ_2(\BB)$ equivalent to category $\sV$, which is equal to $\text{Rep}(\Z_2)$ with the super braiding. In particular, a super-modular category $\BB$ admits a fermion $F$ in $C_\BB(\BB)$ with $\theta_F=-1$.

\begin{lem}\label{l9.2} Let $V=V_{\bar 0}\oplus V_{\bar 1}$ be a vertex operator superalgebra satisfying assumptions A1-A2. Then $\CC_{V_{\bar 0}}^0 = C_{V_{\bar 1}}(\CC_{V_{\bar 0}})$ and is super-modular.
\end{lem}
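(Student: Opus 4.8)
The plan is to establish the two assertions in turn: first to identify the Müger centralizer $C_{V_{\bar 1}}(\CC_{V_{\bar 0}})$ with $\CC_{V_{\bar 0}}^0$ through an $S$-matrix computation, and then to deduce super-modularity from the centralizer theory for fusion subcategories of a modular category. For the identification I would use that $V_{\bar 1}$ is invertible with $\qdim_{V_{\bar 0}}V_{\bar 1}=1$ (Lemma \ref{l9.1}), so that for each simple object $X$ the double braiding $c_{X,V_{\bar 1}}\circ c_{V_{\bar 1},X}$ acts as a scalar $\lambda_X$ on $V_{\bar 1}\boxtimes X$; taking categorical traces gives $S_{V_{\bar 1},X}=\lambda_X\,S_{V_{\bar 0},X}$. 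Hence $X$ lies in $C_{V_{\bar 1}}(\CC_{V_{\bar 0}})$ exactly when $\lambda_X=1$, i.e. when $S_{V_{\bar 1},X}=S_{V_{\bar 0},X}$. The proof of Theorem \ref{th8.1} already records $S_{M^i_{\bar r},V_{\bar 1}}=S_{M^i_{\bar r},V_{\bar 0}}$ for all $i,r$, so every simple object $M^i_{\bar r}$ of $\CC_{V_{\bar 0}}^0$ centralizes $V_{\bar 1}$. Conversely, taking $M^0=V$ so that $M^0_{\bar 0}=V_{\bar 0}$ and $M^0_{\bar 1}=V_{\bar 1}$, Lemmas \ref{l5.2} and \ref{l5.3} give $S_{N^j,V_{\bar 1}}=-S_{N^j,V_{\bar 0}}$ and $S_{N^k_{\bar s},V_{\bar 1}}=-S_{N^k_{\bar s},V_{\bar 0}}$. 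Since $S_{X,V_{\bar 0}}=\qdim_{V_{\bar 0}}(X)\,S_{V_{\bar 0},V_{\bar 0}}$ is strictly positive for every simple $X$ by pseudounitarity, none of the twisted-sector modules $N^j$ $(0\le j\le q)$ or $N^k_{\bar s}$ $(q<k\le p)$ satisfies the centralizing condition. As these are exactly the simple objects of $\CC_{V_{\bar 0}}$ not lying in $\CC_{V_{\bar 0}}^0$ (Theorems \ref{t4.2} and \ref{th8.1}), I conclude $C_{V_{\bar 1}}(\CC_{V_{\bar 0}})=\CC_{V_{\bar 0}}^0$.

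For super-modularity, write $\CC=\CC_{V_{\bar 0}}$ and $\DD=\CC_{V_{\bar 0}}^0=C_{V_{\bar 1}}(\CC)$, and recall that the Müger center satisfies $\ZZ_2(\DD)=C_\DD(\DD)=\DD\cap C_\CC(\DD)$. The key step is to compute $C_\CC(\DD)=\langle V_{\bar 1}\rangle=\{V_{\bar 0},V_{\bar 1}\}$. Since the centralizing relation is symmetric and $\DD=C_\CC(V_{\bar 1})$, both $V_{\bar 0}$ and $V_{\bar 1}$ centralize all of $\DD$, so $\langle V_{\bar 1}\rangle\subseteq C_\CC(\DD)$. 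Müger's dimension identity $\dim\DD\cdot\dim C_\CC(\DD)=\dim\CC$ for a fusion subcategory of a modular category then forces equality: by Section 7 we have $\dim\CC=\glob(V_{\bar 0})=4\glob(V)$ and $\dim\DD=\tfrac12\dim\CC=2\glob(V)$, whence $\dim C_\CC(\DD)=2=\dim\langle V_{\bar 1}\rangle$ and therefore $C_\CC(\DD)=\langle V_{\bar 1}\rangle$. Because $V_{\bar 0},V_{\bar 1}\in\DD$, this yields $\ZZ_2(\DD)=\DD\cap\langle V_{\bar 1}\rangle=\langle V_{\bar 1}\rangle$. Finally $V_{\bar 1}$ is a fermion (Lemma \ref{l9.1}), so $\theta_{V_{\bar 1}}=-1$ and $c_{V_{\bar 1},V_{\bar 1}}=-\id$; hence $\langle V_{\bar 1}\rangle$ is braided-equivalent to $\sV$, and $\CC_{V_{\bar 0}}^0$ is super-modular.

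I expect the main obstacle to lie in the first part: pinning down the $S$-matrix characterization of the centralizer and carefully matching it against the entries from Lemmas \ref{l5.1}--\ref{l5.3}, where it is precisely the positivity of the quantum dimensions that excludes the twisted-sector objects. The super-modularity then rests only on Müger's dimension formula (equivalently, the double-centralizer theorem) together with the fermion property, which is what identifies $\langle V_{\bar 1}\rangle$ with $\sV$ rather than with $\Rep(\Z_2)$.
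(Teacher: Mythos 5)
Your proof is correct, and it reaches the same two conclusions as the paper, but the route to the first half is genuinely different. For the containment $\CC_{V_{\bar 0}}^0 \subseteq C_{V_{\bar 1}}(\CC_{V_{\bar 0}})$ the paper argues directly at the level of vertex operators: it identifies the one-dimensional spaces of intertwining operators of types $\left(_{M^i_{\bar r},V_{\bar 1}}^{\ M^i_{\overline{r+1}}}\right)$ and $\left(_{V_{\bar 1},M^i_{\bar r}}^{\ M^i_{\overline{r+1}}}\right)$ and verifies by hand that the composite of the two braidings returns the original operator $Y$, so the double braiding is the identity. You instead invoke the standard $S$-matrix criterion for centralization of an invertible object ($X$ centralizes $V_{\bar 1}$ iff $S_{V_{\bar 1},X}=S_{V_{\bar 0},X}$, since the double braiding acts as a scalar on the simple object $V_{\bar 1}\boxtimes X$) and read the answer off from Lemmas \ref{l5.1}--\ref{l5.3}; positivity of $S_{X,V_{\bar 0}}$ then excludes the twisted-sector simples. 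This buys you something the paper only gets implicitly: a direct, object-by-object verification of the equality $\CC_{V_{\bar 0}}^0 = C_{V_{\bar 1}}(\CC_{V_{\bar 0}})$, rather than deducing it afterwards from a dimension count. The cost is that your argument leans on the identification of the VOA modular $S$-matrix with the categorical $S$-matrix (Huang's theorem), though the paper already uses that identification freely in the proof of Theorem \ref{th8.1}, so nothing new is being assumed. For the super-modularity, both you and the paper run essentially the same M\"uger dimension count $\dim\CC_{V_{\bar 0}}=\dim C_{\CC_{V_{\bar 0}}}(\CC^0_{V_{\bar 0}})\cdot\dim\CC^0_{V_{\bar 0}}$ together with $\dim\CC_{V_{\bar 0}}=2\dim\CC^0_{V_{\bar 0}}$; you phrase it as pinning down $C_{\CC_{V_{\bar 0}}}(\CC^0_{V_{\bar 0}})=\langle V_{\bar 1}\rangle$ and intersecting with $\CC^0_{V_{\bar 0}}$, while the paper sandwiches $\ZZ_2(\CC^0_{V_{\bar 0}})$ between $2$ and $\dim C_{\CC_{V_{\bar 0}}}(\CC^0_{V_{\bar 0}})=2$, but these are the same argument. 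Your closing remark that the fermion property of $V_{\bar 1}$ is what forces $\langle V_{\bar 1}\rangle\simeq\sV$ rather than $\Rep(\Z_2)$ is exactly the right point and is left implicit in the paper.
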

\begin{proof} By Theorem \ref{th8.1} that $C^0_{V_{\bar 0}}$ is a braided fusion subcategory $\CC_{V_{\bar 0}}$. We first prove that $V_{\bar 1}$ lies in ${\cal Z}_2({\cal C}_{V_{\bar 0}}^0)$, and hence $\dim \ZZ_2(\CC^0_{V_{\bar 0}}) \ge 2$. Equivalently we need to show that
$$c_{V_{\bar 1},M^i_{\bar r}}\circ c_{M^i_{\bar r},V_{\bar 1}}=\id_{M^i_{\bar r}\boxtimes V_{\bar 1}}$$
for $i=0,\dots ,p$ and $r=0,1.$ Since $V_{\bar 1}$ is a simple current we know that both  $V_{\bar 1}\boxtimes M^i_{\bar r}$ and $M^i_{\bar r}\boxtimes V_{\bar 1}$ are isomorphic to $M^i_{\overline{r+1}}.$
As usual we will denote the space of intertwining operator of type $\left(_{W^1,W^2}^{\ \ W^3}\right)$ by $I\left(_{W^1,W^2}^{\ \ W^3}\right)$ where $W^j$ are modules for vertex operator algebra
$V_{\bar 0}$ for $i=1,2,3.$ Then $I\left(_{M^i_{\bar r},V_{\bar 1}}^{\ M^i_{\overline{r+1}}}\right)=\C {\cal Y}$ and $I\left(_{V_{\bar 1},M^i_{\bar r}}^{\ M^i_{\overline{r+1}}}\right)=\C Y$ where $Y$ is the restriction
of $Y$ defining the $V$-module structure on $M^i$ to $V_{\bar 1}$ and ${\cal Y}(w,z)u=e^{zL(-1)}Y(u,-z)w$ for $u\in V_{\bar 1}$ and $w\in M^i_{\bar r}.$ In this case, $c_{M^i_{\bar r},V_{\bar 1}}$ is a linear map
from $I\left(_{M^i_{\bar r},V_{\bar 1}}^{\ M^i_{\overline{r+1}}}\right)$ to $I\left(_{V_{\bar 1},M^i_{\bar r}}^{\ M^i_{\overline{r+1}}}\right)$ such that ${\cal Y}$ is mapped to ${\cal Y}'$ where
${\cal Y}'(u,z)w=e^{zL(-1)}{\cal Y}(w,-z)u$ for $u,w$ as before. Similarly, $c_{V_{\bar 1},M^i_{\bar r}}$ is a linear map from $I\left(_{V_{\bar 1},M^i_{\bar r}}^{\ M^i_{\overline{r+1}}}\right)$ to $I\left(_{M^i_{\bar r},V_{\bar 1}}^{\ M^i_{\overline{r+1}}}\right)$ such that $Y$ is mapped to ${\cal Y}.$ It is trivial to verify that ${\cal Y}'=Y$ and $c_{V_{\bar 1},M^i_{\bar r}}\circ c_{M^i_{\bar r},V_{\bar 1}}=\id_{M^i_{\bar r}\boxtimes V_{\bar 1}}.$

It remains to show that $V_{\bar s}$ for $s=0,1$ are the only simple objects in ${\cal Z}_2({\cal C}_{V_{\bar 0}}^0).$ Since ${\cal C}_{V_{\bar 0}}$ is modular, it follows from Theorem 3.2 of \cite{M} that
$$
\dim {\CC}_{V_{\bar 0}}=\dim C_{\CC_{V_{\bar 0}}}({\CC}_{V_{\bar 0}}^0) \cdot \dim {\cal C}_{V_{\bar 0}}^0.
$$
From the discussion in Section 8 we know that
$$\dim {\cal C}_{V_{\bar 0}}=2\dim {\cal C}_{V_{\bar 0}}^0.$$
This forces $\dim C_{{\cal C}_{V_{\bar 0}}}({\cal C}_{V_{\bar 0}}^0)=2.$ Clearly, $\ZZ_2(\CC_{V_{\bar 0}}^0)\subset  C_{\CC_{V_{\bar 0}}}(\CC_{V_{\bar 0}}^0).$ This implies that
$$
2\le \dim{\cal Z}_2({\cal C}_{V_{\bar 0}}^0) \le  \dim  C_{\CC_{V_{\bar 0}}}(\CC_{V_{\bar 0}}^0) =2,$$
and hence $\ZZ_2(\CC_{V_{\bar 0}}^0)=C_{\CC_{V_{\bar 0}}}(\CC_{V_{\bar 0}}^0)$.
\end{proof}

We can now formulate the 16-fold way conjecture in \cite{BGHNPRW} in the context of vertex operator algebra. Let $\BB$ be  a super-modular category. A modular category $\CC$, which contains $\BB$ as a full ribbon subcategory,  is called a \emph{minimal modular extension} or a \emph{modular closure} of ${\cal B}$ if $\dim{\CC}=2 \dim{\BB}.$

\begin{conj} If $\BB$ is a super-modular category, then $\BB$ admits a minimal modular extension. In this case, there are exactly 16 minimal modular extensions of $\BB$ up to braided monoidal equivalence.
\end{conj}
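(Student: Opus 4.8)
The plan is to split the conjecture into its two assertions---\emph{existence} of a minimal modular extension and the precise \emph{count} of sixteen---and to treat them with entirely different machinery, since only one of them is within reach of the methods developed here. The existence assertion for an arbitrary super-modular $\BB$ is the true open problem, so I would first isolate the case that is actually accessible: a super-modular category $\BB$ realized as $\CV0^0$ for some vertex operator superalgebra $V$ satisfying A1--A2. In that situation Lemma \ref{l9.2} already exhibits $\CV0$ as a minimal modular extension of $\BB$, so existence is free, and the genuine task becomes producing enough \emph{pairwise inequivalent} extensions to meet the predicted number.

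For the count, the structural input I would invoke is that the minimal modular extensions of the trivial super-modular category $\sV$ form a group of order sixteen under the stacking (Deligne-product-then-condense) operation, which is exactly Kitaev's sixteen-fold way \cite{Ki}. Granting one minimal modular extension $\CC_0$ of $\BB$, I would then show that the set of all minimal modular extensions of $\BB$ is a torsor over this group: stacking $\CC_0$ with each of the sixteen minimal modular extensions $\EE$ of $\sV$ and condensing the resulting diagonal boson (the product of the two fermions) yields sixteen minimal modular extensions of $\BB$, while conversely any two extensions of $\BB$ differ by a unique extension of $\sV$. This is precisely the argument of \cite{LKW1}, giving ``exactly sixteen, provided one exists.''

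To realize this torsor concretely through vertex operator superalgebras I would use the holomorphic fermionic building blocks $V(l,\BZ+\frac12)$. Forming $V^l=V\otimes V(l,\BZ+\frac12)$, Theorem \ref{t9.6} shows $\CC^0_{\Vl0}$ is braided equivalent to $\BB=\CV0^0$, so each $\CC_{\Vl0}$ is again a minimal modular extension of $\BB$. Distinctness is then reduced to an invariant computation: by Theorem \ref{t10.3}, $\CC_{\Vl0}\simeq\CC_{\Vm0}$ if and only if $l\equiv m\pmod{16}$, proved by comparing Gauss sums (equivalently the chiral central charge modulo $8$) and applying \cite[Theorem 5.4]{LKW2}. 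This furnishes sixteen pairwise inequivalent extensions $\CC_{\Vl0}$ for $l=1,\dots,16$, and together with the torsor count of \cite{LKW1} these must exhaust all minimal modular extensions of $\BB$.

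The main obstacle is the existence half for super-modular categories that do \emph{not} arise as $\CV0^0$ of any vertex operator superalgebra: the entire construction presupposes such a realization, since there is no known way to attach a holomorphic fermionic extension to a purely categorical $\BB$. The counting half is, by contrast, essentially settled once existence is secured. The genuine difficulty is thus to either prove that every super-modular category is realized by a vertex operator superalgebra, or to build a minimal modular extension by categorical means alone---and the present approach resolves the conjecture only for the realizable class.
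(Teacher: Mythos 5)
The statement you are addressing is stated in the paper only as a conjecture: the paper does not prove it in general, and explicitly notes that the existence of a minimal modular extension for an arbitrary super-modular category remains open, with the ``exactly 16'' count (given existence) imported from \cite[Theorem 5.4]{LKW1}. Your proposal correctly identifies this and, for the realizable case $\BB\simeq\CC^0_{V_{\bar 0}}$, follows essentially the same route the paper takes --- existence via Theorem \ref{t9.4}, the family $V^l=V\otimes V(l,\BZ+\frac12)$ via Theorem \ref{t9.6}, distinctness by the Gauss-sum computation of Theorem \ref{t10.3}, and the citation of \cite{LKW1} for the upper bound --- so your account matches the paper's partial resolution and correctly locates the genuine remaining gap.
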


Under the assumption of the existence of minimal closure of super-modular category, the second part of the conjecture has been proved in \cite[Theorem 5.4]{LKW1}.

From Lemma \ref{l9.2} and its proof we immediately obtain:
\begin{thm}\label{t9.4} Let $V$ be a vertex operator superalgebra satisfying A1-A2. Then  ${\cal C}_{V_{\bar 0}}$ is a minimal modular extension of the super-modular category ${\cal C}_{V_{\bar 0}}^0.$
\end{thm}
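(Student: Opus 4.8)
The plan is to verify directly that $\CV0$ meets the three defining requirements of a minimal modular extension of $\CV0^0$: that $\CV0$ is modular, that $\CV0^0$ sits inside it as a full ribbon subcategory, and that $\dim\CV0 = 2\dim\CV0^0$. All three have essentially been prepared in the preceding sections, so the argument is an assembly rather than a fresh computation.

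First I would record that $\CV0$ is a modular tensor category with positive quantum dimensions: this is Huang's theorem \cite{Hu2} applied to the rational, $C_2$-cofinite, CFT-type vertex operator algebra $V_{\bar 0}$, as already invoked at the end of Section 8. Next, Theorem \ref{th8.1} identifies $\CV0^0$ as the fusion subcategory of $\CV0$ whose simple objects are the $M^j_{\bar r}$, and the remark following it shows $\CV0$ is $\BZ_2$-graded with $\CV0^0$ the trivial component; in particular $\CV0^0$ is closed under the braiding of $\CV0$, and its ribbon (pivotal) structure is the restriction of that on $\CV0$. Thus $\CV0^0$ is a full ribbon subcategory of $\CV0$, and by Lemma \ref{l9.2} it is super-modular, its M\"uger center being equivalent to $\sV$ with fermion $V_{\bar 1}$.

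It then remains to check the dimension condition. From Section 8 we have $\dim\CV0 = \dim\CV0^0 + \dim\CV0^1$ and $\dim\CV0^0 = \dim\CV0^1$, whence $\dim\CV0 = 2\dim\CV0^0$; equivalently, the proof of Lemma \ref{l9.2} shows via M\"uger's formula \cite{M} that $\dim C_{\CV0}(\CV0^0) = 2$ together with $\dim\CV0 = \dim C_{\CV0}(\CV0^0)\cdot\dim\CV0^0$, giving the same equality. This is precisely the defining condition $\dim\CC = 2\dim\BB$ for $\CC = \CV0$ and $\BB = \CV0^0$, so $\CV0$ is a minimal modular extension of $\CV0^0$.

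Since every substantive step --- closure of $\CV0^0$ under fusion in Theorem \ref{th8.1} (built on the unitary $S$-matrix of Theorem \ref{unitary} and the Verlinde formula) and the identification of its M\"uger center in Lemma \ref{l9.2} --- is already in hand, there is no genuine obstacle remaining; the proof unwinds the definition. The only point deserving a moment's care is confirming that $\CV0^0 \hookrightarrow \CV0$ is an embedding of ribbon categories, i.e.\ that the pivotal/ribbon data on $\CV0^0$ is inherited from $\CV0$ rather than merely compatible, and this is immediate because $\CV0^0$ is a full subcategory and the canonical pivotal structure of a pseudounitary modular category restricts to any full fusion subcategory.
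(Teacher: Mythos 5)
Your proposal is correct and follows essentially the same route as the paper, which simply states that the theorem is immediate from Lemma \ref{l9.2} and its proof; you have just unpacked the same ingredients (modularity of $\CC_{V_{\bar 0}}$ from \cite{Hu2}, Theorem \ref{th8.1} giving $\CC_{V_{\bar 0}}^0$ as a full ribbon subcategory, super-modularity from Lemma \ref{l9.2}, and the dimension count $\dim \CC_{V_{\bar 0}}=2\dim\CC_{V_{\bar 0}}^0$ from Section 8 and M\"uger's formula). Your closing remark about the ribbon structure being inherited by restriction is a point the paper leaves implicit, and it is handled correctly.
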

In view of Theorem \ref{t9.4} and the 16-fold way conjecture, the following question arises:
\begin{q}
Does every super-modular category $\CC$ equivalent to $\CC^0_{V_{\bar 0}}$ for some vertex operator superalgebra $V$?
\end{q}
Our next  goal  is to construct a sequence $\{V^m\}_{m \ge 0}$ of vertex operator superalgebras from $V$ such that $\CC_{(V^m)_{\bar 0}}$ are minimal modular extension of $C_{V_{\bar 0}}^0$ and the equivalence classes of these modular categories $\CC_{(V^m)_{\bar 0}}$ are characterizes by the congruence class of $m$ modulo 16. We close this section with the following theorem.

\begin{thm}\label{t9.6} Let $V,U$ be vertex operator superalgebras satisfying A1-A2 and $U$ be holomorphic. Then ${\cal C}_{V_{\bar 0}}^0$ and ${\cal C}_{(U\otimes V)_{\bar 0}}^0$ are equivalent braided fusion categories. In particular, $\CC_{(U\otimes V)_{\bar 0}}$ is a minimal modular extension of $\CC^0_{V_{\bar 0}}$
\end{thm}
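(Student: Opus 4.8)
The plan is to realize $\CC_{(U\otimes V)_{\bar 0}}$ as a condensation of the Deligne product $\CC_{U_{\bar 0}}\boxtimes\CV0$ and then to exhibit an explicit braided equivalence from $\CV0^0$ onto the M\"uger centralizer of its fermion. First I would check that $U\otimes V$ again satisfies A1--A2: the even part $(U\otimes V)_{\bar 0}=(U_{\bar 0}\otimes V_{\bar 0})\oplus(U_{\bar 1}\otimes V_{\bar 1})$ is a simple-current extension of the regular CFT-type VOA $U_{\bar 0}\otimes V_{\bar 0}$, hence regular of CFT type, and A2 is inherited from $U$ and $V$ by Theorem \ref{twist}; thus Theorems \ref{th8.1}, \ref{t9.4} and Lemma \ref{l9.2} all apply to $U\otimes V$. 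Since $U_{\bar 0},V_{\bar 0}$ are regular of CFT type one has $\CC_{U_{\bar 0}\otimes V_{\bar 0}}\simeq\mathcal{E}$, where $\mathcal{E}:=\CC_{U_{\bar 0}}\boxtimes\CV0$. As $U$ is holomorphic its only irreducible module is $U$, so by Theorem \ref{th8.1} the category $\CC_{U_{\bar 0}}^0$ has exactly the two simple objects $U_{\bar 0},U_{\bar 1}$ and therefore equals $\sV$, with fermion $U_{\bar 1}$. The object $b:=U_{\bar 1}\otimes V_{\bar 1}$ has $\wt[U_{\bar 1}],\wt[V_{\bar 1}]\in\frac12+\Z$, hence integer weight and twist $\theta_b=\theta_{U_{\bar 1}}\theta_{V_{\bar 1}}=1$; so $A:=\b1\oplus b$ (with $\b1=U_{\bar 0}\otimes V_{\bar 0}$) is a commutative \'etale algebra in $\mathcal{E}$ and $\CC_{(U\otimes V)_{\bar 0}}$ is identified with the modular category $\mathcal{E}^0_A$ of local $A$-modules.

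Next I would pin down the fermion and its centralizer inside $\mathcal{E}^0_A$. The fermion $(U\otimes V)_{\bar 1}=(U_{\bar 0}\otimes V_{\bar 1})\oplus(U_{\bar 1}\otimes V_{\bar 0})$ is the free $A$-module on $U_{\bar 0}\otimes V_{\bar 1}$, i.e. the image of $\b1$-component $V_{\bar 1}$. A simple object $X_U\otimes X_V$ of $\mathcal{E}$ descends to a local $A$-module exactly when it centralizes $b$, and its image centralizes $(U\otimes V)_{\bar 1}$ exactly when $X_U\otimes X_V$ also centralizes $U_{\bar 0}\otimes V_{\bar 1}$; by multiplicativity of the monodromy over the Deligne product these two conditions together are equivalent to $X_U\in C_{\CC_{U_{\bar 0}}}(U_{\bar 1})=\sV$ and $X_V\in C_{\CV0}(V_{\bar 1})=\CV0^0$ (Lemma \ref{l9.2}). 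Hence, by Lemma \ref{l9.2} applied to $U\otimes V$, $\CC_{(U\otimes V)_{\bar 0}}^0=C_{(U\otimes V)_{\bar 1}}(\CC_{(U\otimes V)_{\bar 0}})$ is the category of local $A$-modules supported on $\sV\boxtimes\CV0^0$ (in which $b$ indeed lies).

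Finally I would show that the free-module functor realizes the equivalence. Define $\Phi\colon\CV0^0\to\CC_{(U\otimes V)_{\bar 0}}^0$ by $X\mapsto A\otimes(U_{\bar 0}\otimes X)$; explicitly $\Phi(M^j_{\bar s})=(U\otimes M^j)_{\bar s}$, which is precisely the list of simple objects furnished by Theorem \ref{th8.1} for $U\otimes V$. For $X\in\CV0^0$ the monodromy of $U_{\bar 0}\otimes X$ with $b$ is trivial, so $\Phi(X)$ is a local $A$-module supported on $\sV\boxtimes\CV0^0$; since $U_{\bar 0}\otimes X\not\cong U_{\bar 1}\otimes(V_{\bar 1}\boxtimes X)$ (distinct $\CC_{U_{\bar 0}}$-components) the free module is simple whenever $X$ is, no simple of $\sV\boxtimes\CV0^0$ is $b$-stable, and a count of $b$-orbits shows $\Phi$ is a bijection on simple objects. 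Full faithfulness follows by adjunction together with $\Hom_{\mathcal{E}}(U_{\bar 0}\otimes X,\,U_{\bar 1}\otimes(V_{\bar 1}\boxtimes X'))=0$, and $\Phi$ is braided as the composite of the braided embedding $X\mapsto U_{\bar 0}\otimes X$ with the braided free-module functor on the local part of $\mathcal{E}_A$. Thus $\Phi$ is a braided equivalence $\CV0^0\simeq\CC_{(U\otimes V)_{\bar 0}}^0$. The ``in particular'' then follows at once: since $U\otimes V$ satisfies A1--A2, Theorem \ref{t9.4} gives that $\CC_{(U\otimes V)_{\bar 0}}$ is a minimal modular extension of $\CC_{(U\otimes V)_{\bar 0}}^0\simeq\CV0^0$.

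The main obstacle is the first step: the rigorous identification of the $(U\otimes V)_{\bar 0}$-module category with the local $A$-modules $\mathcal{E}^0_A$. This needs the extension/condensation dictionary for VOAs --- that the modules of a bosonic simple-current extension are exactly the local modules over the associated commutative algebra, and that this identification is an equivalence of braided (indeed modular) categories --- together with the verification that $b$ is genuinely \'etale (integer weight, with $A$ simple and commutative). Once this dictionary is in place the centralizer bookkeeping and the proof that $\Phi$ is a braided equivalence are essentially formal; the only other point requiring care is the confirmation of A1--A2 for $U\otimes V$ so that Lemma \ref{l9.2} and Theorem \ref{t9.4} may be invoked.
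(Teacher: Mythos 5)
Your proposal is correct and follows essentially the same route as the paper: your functor $\Phi(X)=A\otimes(U_{\bar 0}\otimes X)$ is exactly the paper's composite $F=F_1F_2$ of the embedding $X\mapsto U_{\bar 0}\otimes X$ with the induction functor of \cite{CKM} into $\CC_{(U\otimes V)_{\bar 0}}$, and both arguments conclude by matching the simple objects $(U\otimes M^i)_{\bar r}$. The only cosmetic differences are that you phrase the extension dictionary in terms of local modules over the \'etale algebra $\b1\oplus (U_{\bar 1}\otimes V_{\bar 1})$ and verify full faithfulness by a direct adjunction computation (and you also check A1--A2 for $U\otimes V$ explicitly), where the paper instead cites \cite[Corollary 3.26]{DMNO}.
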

\begin{proof} Note that $(U \ot V)_{\bar 0}$ is an algebra object in $\CC_{U_{\bar 0} \ot V_{\bar 0}}$.
   Let $\BB = C_{\CC_{U_{\bar 0} \ot V_{\bar 0}}}((U \ot V)_{\bar 0})$, the M\"uger centralizer of $(U \ot V)_{\bar 0}$ in $\CC_{U_{\bar 0} \ot V_{\bar 0}}$. In view of \cite[Proposition 2.65]{CKM},  let $F_0: \BB \to \CC_{U \ot V}$ and $F_1 : \BB \to {\cal C}_{(U\otimes V)_{\bar 0}}$ be the induction functors, that means
 $$
 F_0(Y) = (U\ot V)\boxtimes_{U_{\bar 0} \otimes V_{\bar 0} } Y, \quad  F_1(Y) =(U\otimes V)_{\bar 0}\boxtimes_{U_{\bar 0} \otimes V_{\bar 0} } Y
 $$
 for $Y$ in $\BB$. By \cite[Theorem 2.67]{CKM}, $F_0, F_1$ are braided tensor functors. Since $F_1(Y)$ is a $(U\otimes V)_{\bar 0}$-submodule of $F_0(Y)$ and $\CC^0_{(U\otimes V)_{\bar 0}}$ is generated by the $(U\otimes V)_{\bar 0}$-submodules of super $U \ot V$-modules, $F_1(Y) \in \obj(\CC^0_{(U\otimes V)_{\bar 0}})$ for $Y \in \BB$.

 Since $(U\otimes V)_{\bar 0} = U_{\bar 0} \ot V_{\bar 0} \oplus U_{\bar 1} \ot V_{\bar 1}$, $U_{\bar 0} \ot X \in \BB$ for any object $X$ of $\CC_{V_{\bar 0}}^0$.
 Note that the functor $U_{\bar 0} \ot -: \CC_{V_{\bar 0}} \to \CC_{U_{\bar 0} \otimes V_{\bar 0}}$ is  a  faithfully full braided tensor functor, and  so is restriction $F_2:  \CC^0_{V_{\bar 0}} \to \BB$. Therefore, the composite functor $F=F_1 F_2: \CC^0_{V_{\bar 0}} \to {\CC}^0_{(U\otimes V)_{\bar 0}}$ is a braided tensor functor. Since $\CC_{V_{\bar 0}}^0$ is super-modular, $F$ is faithfully full by \cite[Corollary 3.26]{DMNO}.

 To show that $\CC^0_{V_{\bar 0}}$ is braided tensor equivalent to $\CC^0_{(U\ot V)_{\bar 0}}$, it suffices to show that every irreducible $(U\ot V)_{\bar 0}$-module is an image of $F$.

 Recall that the inequivalent irreducible super $V$-modules are $M^i$ with $i=0,\dots ,p.$ This implies that
 $\{M^i_{\bar r}\mid i=0,\dots, p, \text{and } r=0,1\}$ is a complete set of inequivalent irreducibles of $\CC_{V_{\bar 0}}^0$. Moreover, inequivalent irreducible $U\otimes V$-modules are $U\otimes M^i.$ Therefore, $(U\otimes M^i)_{\bar r}=
 U_{\bar 0}\otimes M^i_{\bar r}+U_{\bar 1}\otimes M^i_{\overline{1-r}} $ for $i=0,\dots ,p$ and $r=0,1$ are  all the inequivalent irreducible $(U\otimes V)_{\bar 0}$-modules of $\CC^0_{(U \ot V)_{\bar 0} }$.

 For any irreducible $X\in\obj({\cal C}_{V_{\bar 0}}^0)$,
 $$F(X)=(U\otimes V)_{\bar 0}\boxtimes (U_{\bar 0}\otimes X)$$
 which is isomorphic to $U_{\bar 0}\otimes X+ U_{\bar 1}\otimes(V_{\bar 1}\boxtimes X)$
 as $V_{\bar 0}\otimes U_{\bar 0}$-modules.  Therefore, $F(X)$ is the irreducible $(V\otimes U)_{\bar 0}$-module which contains an irreducible $U_{\bar 0}\otimes V_{\bar 0}$-submodule isomorphic to $U_{\bar 0}\ot X$.  Therefore,
By the same reason
 $$
 F(M^i_{\bar r}) \cong (U \ot M^i)_{\bar r}
 $$
 as $(U \ot V)_{\bar 0}$-modules for $i =1,\dots, p$ and $r=0,1$.  Thus, $F: \CC^0_{V_{\bar 0}} \to \CC^0_{(U \ot V)_{\bar 0}}$ is an equivalence.  The last statement follows immediately from Theorem \ref{t9.4}.
\end{proof}

The Gauss sum $\tau_1(\CC)$ of a ribbon fusion category $\CC$ is defined as
$$
\tau_1(\CC) = \sum_{X \in \irr(\CC)} \qdim(X)^2 \cdot \theta_X
$$
where $\qdim(X)$ is the pivotal (or quantum) dimension of the simple object $X$ and $\theta_X$ denotes the scalar of the twist. The Gauss sums and their higher degree generalizations $\tau_n(\CC)$ are invariants of ribbon fusion categories (cf. \cite{NSW}). In the case of a fermionic modular category, we follow some idea in \cite{BGNPRW} to prove that the centralizer of the fermion has zero contribution to the Gauss sum.
\begin{lem} \label{l:Gauss_sum}
Let $\CC$ be a pseudounitary modular tensor category over $\C$, $f$ a fermion of $\CC$, and $\CC^0$ the M\"uger centralizer of $f$. Then
$$
\tau_1(\CC^0) = 0
$$
\end{lem}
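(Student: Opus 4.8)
The plan is to use the fermion $f$ to organize the simple objects of $\CC^0$ into pairs whose contributions to the Gauss sum cancel. First I would isolate the two properties of $f$ that matter inside $\CC^0$. Since $\CC^0 = C_\CC(f)$ is the M\"uger centralizer of $f$, every object of $\CC^0$ centralizes $f$, so the double braiding $c_{X,f}\circ c_{f,X} = \id_{f\boxtimes X}$ for every $X$ in $\CC^0$; that is, $f$ is transparent in $\CC^0$ (note $f$ itself lies in $\CC^0$, as $c_{f,f}\circ c_{f,f} = (-\id)^2 = \id$). Moreover $f$ is invertible of order $2$ with $\qdim(f)=1$ and $\theta_f = -1$.

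Next I would record the effect of tensoring with $f$ on a simple object $X$ of $\CC^0$. Because $f$ is invertible, $f\boxtimes X$ is again simple and $\qdim(f\boxtimes X) = \qdim(f)\qdim(X) = \qdim(X)$. The key computation is the twist of $f\boxtimes X$. The balancing axiom reads
\[
\theta_{f\boxtimes X} = (\theta_f\boxtimes\theta_X)\circ c_{X,f}\circ c_{f,X},
\]
and since $f$ is transparent in $\CC^0$ the double braiding $c_{X,f}\circ c_{f,X}$ is the identity on the simple object $f\boxtimes X$. Evaluating the scalars gives $\theta_{f\boxtimes X} = \theta_f\,\theta_X = -\theta_X$.

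Then I would observe that $X\mapsto f\boxtimes X$ is a fixed-point-free involution on $\irr(\CC^0)$: it is an involution because $f\boxtimes f\cong\b1$, and it has no fixed point since $f\boxtimes X\cong X$ would force $\theta_X = \theta_{f\boxtimes X} = -\theta_X$, hence $\theta_X = 0$, which is impossible as $\theta_X$ is a nonzero scalar (an automorphism of the simple object $X$). Partitioning $\irr(\CC^0)$ into pairs $\{X,\,f\boxtimes X\}$, each pair contributes
\[
\qdim(X)^2\theta_X + \qdim(f\boxtimes X)^2\theta_{f\boxtimes X} = \qdim(X)^2\theta_X - \qdim(X)^2\theta_X = 0
\]
to $\tau_1(\CC^0)$, so the total Gauss sum vanishes.

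The only genuinely delicate point is the twist identity $\theta_{f\boxtimes X} = -\theta_X$, which rests on the transparency of $f$ in $\CC^0$ feeding into the balancing axiom; everything else is bookkeeping about invertible objects and the pairing argument. I expect this step to be routine once the transparency of $f$ (coming directly from the definition of the M\"uger centralizer) is invoked.
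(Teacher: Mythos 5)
Your proposal is correct and follows essentially the same route as the paper: the paper also derives $\theta_{X\boxtimes f}=-\theta_X$ from the balancing/twist equation together with the transparency of $f$ in $\CC^0$ (expressed there as $S_{X,f}=\qdim(X)$), concludes that tensoring with $f$ acts on $\irr(\CC^0)$ without fixed points, and cancels the Gauss sum in pairs. Your explicit justification that a fixed point would force $\theta_X=0$ is a slightly more detailed version of the same step.
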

\begin{proof}
Let $X \in \irr(\CC^0)$. Then $S_{X, f} =\qdim(X)$ where $S_{X,Y} $ is the trace of $c_{Y, X} \circ c_{X, Y}$. Since $f$ is an invertible object, $X \ot f$ is a simple object of $C^0$ and $\qdim(X \ot f) = \qdim(X)$.
 On the other hand, by the twist equation, we have
$$
\theta_{X \ot f} \qdim(X \ot f) = \theta_X \theta_f S_{X,f},
$$
which implies $\theta_{X \ot f} = - \theta_X$. In particular, the action of $f$ on $\irr(\CC)$ has no fixed point. Therefore, there exists a subset $\OO$ of $\irr(\CC^0)$ such that $\bigcup_{X \in \OO} \{X, X \ot f\} = \irr(\CC^0)$. Thus,
$$
\tau_1(\CC^0)= \sum_{X \in \OO} \qdim(X)^2\cdot \theta_X - \qdim(X \ot f)^2\cdot \theta_X = 0\,.\quad \qedhere
$$
\end{proof}

\section{16 minimal modular extensions}

In this section we use the holomorphic vertex operator superalgebas $V(l,\Z+\frac{1}{2})$ for $l\geq 1$ and Theorem \ref{t9.6} to obtain all the 16 minimal modular extensions of ${\cal C}_{V_{\bar 0}}^0$ for any given vertex operator superalgebra satisfying A1-A2.

The construction of  $V(l,\Z+\frac{1}{2})$ is well known (see \cite{FFR}, \cite{KW}, \cite{L1}). Let $H_l=\oplus_{i=1}^{l}{\Bbb C}a_{i}$ be a complex vector space equipped
with a nondegenerate symmetric bilinear form $(\cdot,\cdot)$ such that  $(a_i,a_j)=2\delta_{i,j}.$  Let $A(l,{\Bbb Z}+\frac{1}{2})$ be the
associative algebra generated by $\{a(n)\,|\,a\in H_l,n\in {\Bbb
Z}+\frac{1}{2}\}$ subject to the relation
$$[a(n),b(m)]_{+}=(a,b)\delta_{{m+n},0}.$$
Let $A(l,{\Bbb
Z}+ \frac{1}{2})^+$  be the subalgebra generated by $\{a(n)\,|\,a\in H_l,n\in
{\Bbb Z}+\frac{1}{2},n>0 \},$ and make ${\Bbb C}$ a $1$-dimensional
 $A(l,{\Bbb
Z}+ \frac{1}{2})^+$-module so that $a_i(n)1=0$
for $n>0.$ The induced module
\begin{eqnarray*}
& &V(l,{\Bbb Z}+\frac{1}{2})=A(l,{\Bbb Z}+\frac{1}{2})\otimes_{A(l,{\Bbb
Z}+ \frac{1}{2})^+}\C\\
& & \ \ \ \ \ \ \ \ \ \  \ \ \ \ \  \  \ \cong \bigwedge [a_i(-n)|n >0,n\in {\Bbb Z}+\frac{1}{2} ,i=1,2,\dots l]\ ({{\rm linearly}}).
 \end{eqnarray*}
is a holomorphic vertex operator superalgebra generated by $a_i(-\frac{1}{2})$ for $i=1,\dots ,l$ and $Y(a_i(-\frac{1}{2}),z)=a_i(z)=\sum_{n\in\Z}a_i(-n-\frac{1}{2})z^{-n-1}.$ For example, if $l=1$ then $V(1,{\Bbb Z}+\frac{1}{2})$ is isomorphic to $L(\frac{1}{2},0)+L(\frac{1}{2},\frac{1}{2})$ as a module for the Virasoro vertex operator algebra $L(\frac{1}{2},0).$ Moreover, $V(1,\Z+\frac{1}{2})_{\bar 0}=L(\frac{1}{2},0)$ and
$V(1,\Z+\frac{1}{2})_{\bar 1}=L(\frac{1}{2},\frac{1}{2}).$ If $l=2k$ is even then $V(l,\Z+\frac{1}{2})$ is isomorphic to the lattice vertex operator superalgebra $V_{\Z^k}$ where $\Z^k$ is the lattice in $\R^k$ with the standard inner product.

As usual, we use $\sigma$ to denote the canonical automorphism of $V(l,\Z+\frac{1}{2}).$  To construct $\sigma$-twisted $V(l,\Z+\frac{1}{2})$-modules we need to consider two cases $l$ is even or odd. If $l=2k$ is even,
The $H_{2k}$ can be written as
$$H_{2k}=\sum_{i=1}^{k}{\Bbb C} b_i+\sum_{i=1}^{k}{\Bbb C}b_i^* $$ with
$(b_i,b_j)=(b_i^*,b_j^*)=0, (b_i,b_j^*)=\delta_{i,j}.$ Let $A(2k,{\Bbb
Z})$ be the associative algebra generated by $\{b(n)\,|\,b\in H_{2k}, n\in
{\Bbb Z}\} $ subject to the relation
$$[a(m),b(n)]_+=(a,b)\delta _{m+n,0}$$ Let $A(2k,{\Bbb Z})^+$ be the
subalgebra generated by $\{b_i(n),b_i^{*}(m)\,|\,n\geq 0, m >0,
i=1,\ldots, k\},$ and make ${\Bbb C}$  a $1$-dimensional $A(2k,{\Bbb
Z})^+$-module with $b_i(n)1=0$  and $b_i^*(m)1=0$ for
 $n\geq 0,$ $m>0,$  $i=1,\dots , k.$ Consider the induced $A(2k,{\Bbb Z})$-module
\begin{eqnarray*}
 V(2k,{\Bbb Z})=A(2k,{\Bbb Z})\otimes_{A(2k,{\Bbb Z})^+} {\Bbb C} \cong \Lambda [b_i(-n),b_i^{*}(-m)\,|\,n,m\in {\Bbb Z},n>0, m\geq0].
\end{eqnarray*}
By Proposition 4.3 in \cite{L2}, $V(2k,{\Bbb Z})$ is an irreducible
 $\sigma$-twisted $V(2k,{\Bbb Z}+\frac{1}{2})$-module such that
\begin {eqnarray*}
Y_{V(2k,{\Bbb Z})}(u(-\frac{1}{2}),z)=u(z)=\sum_{n\in {\Bbb Z}}u(n)z^{-n-1/2}
\end{eqnarray*}
for $u\in H_{2k}.$ Moreover,  $V(2k,{\Bbb Z})$ is the only irreducible
 $\sigma$-twisted $V(2k,{\Bbb Z}+\frac{1}{2})$-module up to isomorphism \cite{DZ2}. As a result,  $V(2k,{\Bbb Z}+\frac{1}{2})_{\bar 0}$ has 4 inequivalent irreducible modules  $V(2k,{\Bbb Z}+\frac{1}{2})_{\bar r},$
  and $V(2k,{\Bbb Z})_{\bar r}$ ($r=0,1$) of weights $0,\frac{1}{2}, \frac{k}{8}, \frac{k}{8}$, and quantum dimension 1.

If $l=2k+1$ is odd,   $H_{2k+1}$ can be decomposed into:
$$H_{2k+1}=\sum_{i=1}^{k}{\Bbb C} b_i+\sum_{i=1}^{k}{\Bbb C}b_i^*+{\Bbb C}e$$
with $(b_i,b_j)=(b_i^*,b_j^*)=0,  (b_i,b_j^*)=\delta_{i,j}, (e,b_i)=(e,b_i^*)=0,(e,e)=2.$
Let $A(2k+1,{\Bbb Z})$ be the associative algebra generated by $a(n)$ for $a\in H_{2k+1}$ and $n\in \Z$ subject to the same relation as before, Let $A(2k+1,{\Bbb Z})^+$ be the subalgebra generated by
$$\{b_i(n),b_i^*(m),e(m)\,|\,m,n\in \Z,n\geq 0, m>0,i=1,\ldots, k\}$$
and
make ${\Bbb C}$ a $1$-dimensional $A(2k+1,{\Bbb Z})^+$-module with
$b_i(n)1=0$ for $n\geq0 $ and $b_i^*(m)1=e(m)1=0$ for $m>0,$
$i=1,\dots , k.$ Set
$$
 V(2k+1,{\Bbb Z})=A(2k+1,{\Bbb Z})\otimes_{A(2k+1,{\Bbb Z})^+} {\Bbb C}\,.
$$
It is easy to see that $V(2k+1,\Z)$ is isomorphic to the exterior algebra
$$ W(2k+1,{\Z})=\Lambda [b_i(-n),b_i^{*}(-m),e(-m)\,|\,n,m\in {\Z},n>0, m\geq0]\,.$$
as vector spaces.
    Let $W(2k+1,{\Z})=W(2k+1,{\Z})_{\bar 0}\oplus W(2k+1,\Z)_{\bar 1}$
be the decomposition into the even and odd parity subspaces, and
$$V_{\pm}(2k+1,\Z)=(1\pm e(0))W(2k+1,{\Bbb Z})_{\bar 0}\oplus (1\mp e(0))W(2k+1,{\Bbb Z})_{\bar 1}.$$

Then
$$V(2k+1,\Z)=V_+(2k+1,\Z)\oplus V_-(2k+1,\Z)$$
and $V_{\pm}(2k+1,\Z)$ are irreducible $A(2k+1,{\Bbb Z})$-modules.
It follows from  Proposition 4.3 in \cite{L2} that $V_{\pm}(2k+1,\Z)$
are irreducible $\sigma$-twisted  modules for $V(2k+1,{\Bbb Z+\frac{1}{2}})$ so
that
\begin {eqnarray*}
Y_{V(2k+1,{\Bbb Z})}(u(-\frac{1}{2}),z)=u(z)=\sum_{n\in {\Bbb Z}}u(n)z^{-n-1/2}
\end{eqnarray*}
for $u\in H_{2k+1}.$ Moreover, $V_{\pm}(2k+1,\Z)$ are the only inequivalent irreducible $\sigma$-twisted modules and are isomorphic irreducible $V(2k+1,{\Bbb Z}+\frac{1}{2})_{\bar 0}$-modules \cite{DZ2}.
 In this case $V(2k+1,{\Bbb Z}+\frac{1}{2})_{\bar 0}$ has 3 inequivalent irreducible modules $V(2k+1,{\Bbb Z}+\frac{1}{2})_{\bar r}$ for $r=0,1$ and $V_{+}(2k+1,\Z)$ of weights
 $0, \frac{1}{2}$ and $\frac{2k+1}{16}$, and quantum dimensions $1,1$ and $\sqrt{2}$.

Let $V$ be a vertex operator superalgebra satisfying A1-A2. Set $V^0=V$ and $V^l=V(l,\Z+\frac{1}{2})\otimes V$ for $l\geq 1.$ According to Theorem \ref{t9.4}, ${\cal C}_{(V^l)_{\bar 0}}$ is a minimal modular extension of  ${\cal C}_{V_{\bar 0}}^0$ for $l\geq 0.$  We denote the Virasoro vector of $V^l$ by $\omega^l$ for $l\geq 1$ and write $Y(\omega^l,z)=\sum_{n\in\Z}L^l(n)z^{-n-2}.$ Let $T_l$ be the corresponding $T$-matrix associated to $(V^l)_{\bar 0}$ and set ${\frak t}_l=e^{2\pi i(c+\frac{l}{2})/24}T_l$ which is the matrix for the operator $e^{2\pi iL^l(0)}$
acting on the inequivalent irreducible  $(V^l)_{\bar 0}$-modules. Then ${\frak t}_l$ is the T-matrix of the modular tensor category ${\cal C}_{(V^l)_{\bar 0}}.$

The following result is an immediate consequence of Theorem \ref{twist}.
\begin{lem} \label{l:simple_s_modules} The inequivalent irreducible  $\sigma$-twisted $V^l$-modules are
$$\{V(l,\Z)\otimes N^j , (V(l,\Z)\otimes N^j)\circ\sigma, V(l,\Z)\otimes N^k\,|\,j=0,\dots ,q, k=q+1,\dots ,p\}$$
if $l$ is even, and
$$\{N^{l,j},   N^{l,k}, N^{l,k}\circ\sigma\,|\,j=0,\dots ,q, k=q+1,\dots ,p\}$$
if $l$ is odd where $V(l,\Z)\otimes (N^j+N^j\circ \sigma)=2N^{l,j}$ and  $V(l,\Z)\otimes N^k=N^{l,k}\oplus N^{l,k}\circ \sigma.$
\end{lem}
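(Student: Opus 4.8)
The plan is to read the statement off Theorem \ref{twist}, taking $U=V(l,\Z+\frac{1}{2})$ so that $V^l=U\otimes V$. Both factors are $\sigma$-rational: $V$ by assumptions A1--A2, and $V(l,\Z+\frac{1}{2})$ because, being a nice holomorphic \vosa, it satisfies A1--A2 and its irreducible $\sigma$-twisted modules have been classified in the discussion preceding the lemma. Thus Theorem \ref{twist} applies verbatim, and the only work is to decide which of its four cases occur for each parity of $l$ and to match the indices.

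The key preliminary step is to record the $\sigma$-stability of the $\sigma$-twisted $U$-modules. If $l=2k$ is even, $V(l,\Z)$ is the unique irreducible $\sigma$-twisted $U$-module, so $V(l,\Z)\circ\sigma$, being again irreducible $\sigma$-twisted, is isomorphic to $V(l,\Z)$; hence $V(l,\Z)$ is $\sigma$-stable and plays the role of a module $W^{j'}$ with $j'>q'$ in Theorem \ref{twist}, with no non-stable pairs present. If $l=2k+1$ is odd, the two inequivalent irreducible $\sigma$-twisted $U$-modules $V_{\pm}(l,\Z)$ are each irreducible as $V(l,\Z+\frac{1}{2})_{\bar 0}$-modules, so by Lemma \ref{l2.2} neither is $\sigma$-stable; since $V_+(l,\Z)\circ\sigma$ is irreducible $\sigma$-twisted and cannot be $V_+(l,\Z)$, we conclude $V_+(l,\Z)\circ\sigma\cong V_-(l,\Z)$. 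Thus $V_{\pm}(l,\Z)$ constitute exactly one non-stable pair, in the role of $W^{i'}, W^{i'}_\sigma$ with $i'\le q'$, and $U$ has no $\sigma$-stable $\sigma$-twisted module.

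For $l$ even, only cases (3) and (4) of Theorem \ref{twist} can occur. Case (3), applied to $V(l,\Z)$ and the non-stable pairs $N^j, N^j\circ\sigma$ for $j=0,\dots,q$, produces the inequivalent modules $V(l,\Z)\otimes N^j$ and $(V(l,\Z)\otimes N^j)\circ\sigma\cong V(l,\Z)\otimes(N^j\circ\sigma)$; case (4), applied to $V(l,\Z)$ and the $\sigma$-stable $N^k$ for $k=q+1,\dots,p$, produces the $\sigma$-stable irreducibles $V(l,\Z)\otimes N^k$. Case (5) gives completeness, yielding precisely the asserted even-$l$ list. For $l$ odd, only cases (1) and (2) occur, using $V(l,\Z)=V_+(l,\Z)\oplus V_-(l,\Z)$. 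Case (1), applied to the pair $V_{\pm}(l,\Z)$ and $N^j, N^j\circ\sigma$ for $j=0,\dots,q$, shows $V(l,\Z)\otimes(N^j\oplus N^j\circ\sigma)$ is two copies of a single $\sigma$-stable irreducible $N^{l,j}$; case (2), applied to the pair and the $\sigma$-stable $N^k$ for $k=q+1,\dots,p$, shows $V(l,\Z)\otimes N^k$ splits into the inequivalent pair $N^{l,k}\oplus N^{l,k}\circ\sigma$. Case (5) again gives completeness, matching the odd-$l$ list.

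The argument is essentially a transcription, so there is no serious obstacle; the only point requiring care is the $\sigma$-stability analysis of the $V(l,\Z+\frac{1}{2})$-modules carried out above, since it is this that selects cases (3)--(4) versus (1)--(2) and fixes the correct pairing of the indices. Once that is settled, the five clauses of Theorem \ref{twist} reproduce the two displayed lists directly.
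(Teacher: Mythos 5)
Your proposal is correct and follows exactly the route the paper intends: the paper simply declares the lemma ``an immediate consequence of Theorem \ref{twist},'' and your case analysis supplies the details it leaves implicit. In particular, your $\sigma$-stability check — $V(l,\Z)$ is stable for $l$ even because it is the unique irreducible $\sigma$-twisted module, while for $l$ odd the irreducibility of $V_{\pm}(l,\Z)$ as $V(l,\Z+\frac{1}{2})_{\bar 0}$-modules forces $V_+(l,\Z)\circ\sigma\cong V_-(l,\Z)$ via Lemma \ref{l2.2} — is precisely the point that selects cases (3)--(4) versus (1)--(2) of Theorem \ref{twist} and yields the two displayed lists.
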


\begin{coro}\label{ct} The inequivalent simple $\Vl0$-modules from the $\sigma$-twisted $V^l$-modules are
$$\{V(l,\Z)\otimes N^j, (V(l,\Z)\otimes N^k)_{\bar r}\,|\,j=0,\dots ,q, k=q+1,\dots ,p, r=0,1\}$$
if $l$ is even. In this case,
$$
\qdim_{(V^l)_{\bar 0 }}(V(l,\Z)\ot N^j) = \qdim_{V_{\bar{0}}} (N^j), \quad \qdim_{(V^l)_{\bar{0}}}(V(l,\Z)\ot N^k)_{\bar{r}}=
\qdim_{V_{\bar{0}}} (N^k_{\bar r})
$$
for $j=0,\dots ,q, k=q+1,\dots ,p, r=0,1$.

If $l$ is odd, the inequivalent simple $(V^l)_{\bar 0}$-modules from the $\sigma$-twisted $V^l$-modules are
$$\{N^{l,j}_{\bar r},  N^{l,k}\,|\,j=0,\dots ,q, k=q+1,\dots ,p,r=0,1\}\,$$  and
$$
\qdim_{\Vl0} ( N_{\bar{r}}^{l, j}) = \frac{1}{\sqrt{2}}\cdot \qdim_{V_{\bar 0}} (N^j), \quad \qdim_{\Vl0}(N^{l, k}) =
\sqrt{2} \cdot \qdim_{V_{\bar 0}} (N^k_{\bar r})
$$
for $j=0,\dots ,q, k=q+1,\dots ,p, r=0,1$.
\end{coro}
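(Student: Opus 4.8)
The plan is to read off the simple $\Vl0$-modules from the list of irreducible $\sigma$-twisted $V^l$-modules in Lemma \ref{l:simple_s_modules}, and then to compute their quantum dimensions by combining multiplicativity of quantum dimension under tensor products with Proposition \ref{MTH} applied to $V^l$. First I would determine, for each irreducible $\sigma$-twisted $V^l$-module $W$ appearing in Lemma \ref{l:simple_s_modules}, whether $W$ is $\sigma$-stable. By Lemmas \ref{l2.1} and \ref{l2.2}, a non-$\sigma$-stable $W$ is already a simple $\Vl0$-module, whereas a $\sigma$-stable $W$ splits as $W_{\bar 0}\oplus W_{\bar 1}$ into two inequivalent simple $\Vl0$-modules. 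For even $l$ the module $V(l,\Z)$ is $\sigma$-stable, so Theorem \ref{twist}(3),(4) give that $V(l,\Z)\ot N^j$ ($j\le q$) is non-$\sigma$-stable while $V(l,\Z)\ot N^k$ ($k>q$) is $\sigma$-stable; for odd $l$, Theorem \ref{twist}(1),(2) give that the $N^{l,j}$ ($j\le q$) are $\sigma$-stable while the $N^{l,k}$ ($k>q$) are not. In each case this reproduces exactly the stated list of simple $\Vl0$-modules.

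For the quantum dimensions, the key input is multiplicativity: since $V^l=V(l,\Z+\tfrac12)\ot V$ and characters factor over tensor products, $\chi_{A\ot B}=\chi_A\chi_B$, whence $\qdim_{V^l}(A\ot B)=\qdim_{V(l,\Z+\frac12)}(A)\cdot\qdim_V(B)$ for a $\sigma$-twisted $V(l,\Z+\tfrac12)$-module $A$ and a $\sigma$-twisted $V$-module $B$. I would first evaluate the building block $\qdim_{V(l,\Z+\frac12)}(V(l,\Z))$. Since $V(l,\Z+\tfrac12)$ is holomorphic, $\glob(V(l,\Z+\tfrac12))=1$, so Proposition \ref{MTH}(5) applied to $V(l,\Z+\tfrac12)$ yields $\sum_{M\in\mathscr{M}(\sigma)}(\qdim M)^2=1$. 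For even $l$ one has $\mathscr{M}(\sigma)=\{V(l,\Z)\}$, forcing $\qdim_{V(l,\Z+\frac12)}(V(l,\Z))=1$; for odd $l$ one has $\mathscr{M}(\sigma)=\{V_+(l,\Z),V_-(l,\Z)\}$ with equal quantum dimensions $\tfrac1{\sqrt2}$, so that $\qdim_{V(l,\Z+\frac12)}(V(l,\Z))=\sqrt2$.

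Then I would assemble the computation. Applying Proposition \ref{MTH} to $V^l$ converts $\qdim_{V^l}$ into $\qdim_{\Vl0}$: a non-$\sigma$-stable twisted module $W$ satisfies $\qdim_{\Vl0}W=2\qdim_{V^l}W$, while a $\sigma$-stable one satisfies $\qdim_{\Vl0}W_{\bar r}=\qdim_{V^l}W$. For even $l$, multiplicativity gives $\qdim_{V^l}(V(l,\Z)\ot N^j)=\qdim_V N^j$ and $\qdim_{V^l}(V(l,\Z)\ot N^k)=\qdim_V N^k$; the stability data together with Proposition \ref{MTH}(2),(3) for $V$ itself then yields the claimed $\qdim_{V_{\bar 0}}N^j$ and $\qdim_{V_{\bar 0}}N^k_{\bar r}$. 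For odd $l$ the only extra care concerns multiplicity: from $V(l,\Z)\ot(N^j\oplus N^j_\sigma)=2N^{l,j}$ and additivity I obtain $\qdim_{V^l}N^{l,j}=\sqrt2\,\qdim_V N^j$, and from $V(l,\Z)\ot N^k=N^{l,k}\oplus N^{l,k}\circ\sigma$ I obtain $\qdim_{V^l}N^{l,k}=\tfrac1{\sqrt2}\qdim_V N^k$; feeding these through the stability dictionary and the identities $\qdim_{V_{\bar 0}}N^j=2\qdim_V N^j$, $\qdim_{V_{\bar 0}}N^k_{\bar r}=\qdim_V N^k$ produces the asserted $\tfrac1{\sqrt2}\qdim_{V_{\bar 0}}N^j$ and $\sqrt2\,\qdim_{V_{\bar 0}}N^k_{\bar r}$. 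The only genuine obstacle is this bookkeeping of the factors of $\sqrt2$ and of the multiplicity-two occurrence of $N^{l,j}$ in the odd case; once the $\sigma$-stability of each twisted module is pinned down, the rest is routine.
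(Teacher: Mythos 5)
Your proposal is correct and follows essentially the same route as the paper: read off the list and the $\sigma$-stability of each twisted $V^l$-module from Lemma \ref{l:simple_s_modules} and Theorem \ref{twist}, use multiplicativity of quantum dimensions over tensor products together with $\qdim_{V(l,\Z+\frac{1}{2})}(V(l,\Z))=1$ or $\sqrt{2}$, and then convert between $\qdim_{V^l}$ and $\qdim_{(V^l)_{\bar 0}}$ (and between $\qdim_V$ and $\qdim_{V_{\bar 0}}$) via Proposition \ref{MTH}, keeping track of the multiplicity two in $V(l,\Z)\otimes(N^j\oplus N^j_\sigma)=2N^{l,j}$. The only cosmetic difference is that you derive the values $1$ and $\sqrt{2}$ from Proposition \ref{MTH}(5) and holomorphicity rather than quoting the quantum dimensions listed in the construction of $V(l,\Z)$, which is an equally valid and slightly more self-contained justification.
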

\begin{proof}
The set of simple  $\Vl0$-modules from the $\s$-twisted $V^l$-modules follows immediately from Lemma \ref{l:simple_s_modules} for any nonzero integer $l$.  Let us denote $U^l= \VlhZ$. If $l$ is even, then
$\qdim_{U^l}(\VlZ)=\qdim_{U^l_{\bar 0}} (\VlZ)=1$. For $j=0, \dots, p$, $\VlZ \ot N^j$ is an unstable $\s$-twisted $V^l$-module. It follows from Proposition \ref{MTH} that
$$
\qdim_{\Vl0}(\VlZ \ot N^j) = 2 \qdim_{V^l}(\VlZ \ot N^j ) =2 \qdim_V(N^j) = \qdim_{V_{\bar 0}}(N^j)\,.
$$
For $k = p+1, \dots, q$, $\VlZ \ot N^k$ is  $\s$-stable. By Proposition \ref{MTH},
$$
\qdim_{\Vl0}(\VlZ \ot N^k)_{\bar r} = \qdim_{V^l}(\VlZ \ot N^k) = \qdim_V(N^k) = \qdim_{V_{\bar 0}}(N^k_{\bar r})
$$
for $r=0,1$.

If $l$ is odd, then $\qdim_{U^l}(\VlZ)=\qdim_{U^l_{\bar 0}} (\VlZpm)=\sqrt{2}$. For $j=0, \dots, p$, $N^{l,j}$ is a $\s$-stable $\s$-twisted $V^l$-module and
$$
\qdim_{V^l}(\VlZ \ot (N^j\oplus N_\s^j)) = 2\qdim_{V^l}(N^{l,j}) = 2 \qdim_{\Vl0}(N_{\bar r}^{l,j} )
$$
for any $r=0,1$. On the other hand,
$$
\qdim_{V^l}(\VlZ \ot (N^j\oplus N_\s^j)) = \sqrt{2} \cdot \qdim_V(N^j\oplus N_\s^j) =\sqrt{2} \qdim_{V_{\bar 0}}(N^j)\,.
$$
Thus, we have
$$
\qdim_{\Vl0}(N_{\bar r}^{l,j} )  = \frac{1}{\sqrt{2}}\qdim_{V_{\bar 0}}(N^j)
$$
for $r=0,1$. Similarly, For $k=p+1, \dots, q$, $N^{l,k}$ is a $\s$-unstable $\s$-twisted $V^l$-module and
\begin{align*}
\qdim_{\Vl0}(N^{l,k})  & = 2\qdim_{V^l}(N^{l,k}) = \qdim_{V^l}(N^{l,k} \oplus N^{l,k} _\s) =   \qdim_{V^l}(\VlZ\ot N^k)\\
& =\sqrt{2}\cdot \qdim_{V}(N^k) = \sqrt{2} \cdot \qdim_{V_{\bar 0}}(N_{\bar r}^k)
\end{align*}
for $r=0,1$.
\end{proof}

\begin{thm} \label{t10.3}  The minimal modular extensions  ${\cal C}_{(V^l)_{\bar 0}},$  ${\cal C}_{(V^m)_{\bar 0}}$ of  ${\cal C}_{V_{\bar 0}}^0$ are braided equivalent if and only if $l$ and $m$ are congruent modulo 16. In particular, we obtain 16 minimal modular extensions of  ${\cal C}_{V_{\bar 0}}^0$.
\end{thm}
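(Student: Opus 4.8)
The plan is to distinguish the minimal modular extensions $\CC_{\Vl0}$ by their (normalized) Gauss sums and then to invoke the classification of \cite[Theorem 5.4]{LKW2}. First I would record that, by Theorem \ref{t9.4} applied to $V^l$ together with Theorem \ref{t9.6}, each $\CC_{\Vl0}$ is a minimal modular extension of the \emph{fixed} super-modular category $\CV0^0$, so that
$$\dim\CC_{\Vl0}=2\dim\CV0^0$$
is independent of $l$. Since $V(l,\Z+\frac12)$ has central charge $\frac l2$, the vertex operator algebra $\Vl0$ has central charge $c+\frac l2$, where $c$ is the central charge of $V$; moreover the diagonal twist matrix of $\CC_{\Vl0}$ is exactly the matrix $\mathfrak t_l$ for $e^{2\pi iL^l(0)}$ introduced above, whose entries are the ribbon eigenvalues $\theta_X=e^{2\pi i\lambda_X}$.

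Next I would compute the Gauss sum. For a pseudounitary modular tensor category $\CC$ arising from a rational vertex operator algebra $U$ the normalized Gauss sum equals the anomaly $e^{2\pi ic_U/8}$, i.e.
$$\tau_1(\CC)=e^{2\pi ic_U/8}\sqrt{\dim\CC},$$
where $c_U$ is the central charge of $U$; this is read off from $\mathfrak t_l$ together with the quantum dimensions in Corollary \ref{ct}. Applying it to $\CC_{\Vl0}$ gives
$$\tau_1(\CC_{\Vl0})=e^{2\pi i(c+l/2)/8}\sqrt{2\dim\CV0^0}=e^{2\pi ic/8}\,e^{2\pi il/16}\sqrt{2\dim\CV0^0},$$
so the normalized Gauss sum of $\CC_{\Vl0}$ is $e^{2\pi ic/8}e^{2\pi il/16}$, and the entire $l$-dependence sits in the factor $e^{2\pi il/16}$.

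For the forward implication, recall that $\tau_1$ is a braided-equivalence invariant \cite{NSW}. If $\CC_{\Vl0}$ and $\CC_{\Vm0}$ are braided equivalent, then their Gauss sums and their dimensions coincide, forcing $e^{2\pi il/16}=e^{2\pi im/16}$ and hence $l\equiv m\pmod{16}$. For the converse, $l\equiv m\pmod{16}$ gives $c+\frac l2\equiv c+\frac m2\pmod 8$, so the two minimal modular extensions of $\CV0^0$ share the same central charge modulo $8$; by \cite[Theorem 5.4]{LKW2}, which classifies the minimal modular extensions of a super-modular category by their central charge $\bmod\,8$, they are braided equivalent. Finally, the $16$ categories $\CC_{\Vl0}$ for $l=0,\dots,15$ are pairwise inequivalent by the forward direction, and since $\CV0^0$ admits a minimal modular extension it has exactly $16$ of them up to equivalence \cite{LKW1}; hence the $\CC_{\Vl0}$ realize all minimal modular extensions of $\CV0^0$.

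The main obstacle is the second step: pinning down that the categorical central charge of $\CC_{\Vl0}$ modulo $8$ is exactly $c+\frac l2$ and that it is detected by the normalized Gauss sum. This is where the explicit twist data encoded in $\mathfrak t_l$ and the quantum-dimension computations of Corollary \ref{ct}---together with the even/odd conformal weights $0,\frac12,\frac k8,\frac k8$ and $0,\frac12,\frac{2k+1}{16}$ of the $V(l,\Z+\frac12)_{\bar 0}$-modules---would be used to verify the anomaly $e^{2\pi il/16}$ directly rather than quoting the central-charge relation as a black box. Once that phase is secured, both directions are immediate from the invariance of $\tau_1$ and from \cite{LKW2,LKW1}.
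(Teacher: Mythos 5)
Your proposal is correct in outline and lands on the same global strategy as the paper: distinguish the categories $\CC_{\Vl0}$ by their Gauss sums, observe the $l$-dependence is exactly $e^{2\pi il/16}$, and combine this with the fact that $\CV0^0$ has exactly $16$ minimal modular extensions to get both directions. Where you diverge is in how the Gauss sum is evaluated. The paper first proves a structural vanishing result (Lemma \ref{l:Gauss_sum}: the M\"uger centralizer of a fermion contributes $0$ to the Gauss sum), so that $\tau_1(\CC_{\Vl0})$ is carried entirely by the twisted sector $\CC_{\Vl0}^1$; it then evaluates that sum term by term using the explicit simple objects, twists $e^{2\pi i(\lambda+\frac{l}{16})}$ and quantum dimensions of Corollary \ref{ct}, treating $l$ even and $l$ odd separately, to obtain $\tau_1(\CC_{\Vl0})=e^{2\pi il/16}\tau_1(\CV0)$. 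You instead quote the anomaly formula $\tau_1(\CC_U)=e^{2\pi ic_U/8}\sqrt{\dim\CC_U}$ for the modular category of a rational VOA and the additivity $c_{V^l}=c+\frac l2$, which bypasses Lemma \ref{l:Gauss_sum} and the case analysis entirely. That is a legitimate and in fact shorter route, \emph{provided} the anomaly formula is cited as the theorem it is (it is established in \cite{DLN}, which is in the bibliography, for regular self-dual VOAs of CFT type, and $\Vl0$ qualifies under A1--A2); what each approach buys is a trade-off between an external black box and a self-contained computation that also yields the structural Lemma \ref{l:Gauss_sum} as a byproduct.

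Two small points of care. First, your parenthetical claim that the anomaly formula is ``read off from $\mathfrak t_l$ together with the quantum dimensions in Corollary \ref{ct}'' is not a justification --- verifying $\tau_1/\sqrt{\dim}=e^{2\pi ic/8}$ directly from the twist and dimension data \emph{is} the paper's computation, so either cite \cite{DLN} cleanly or do the sum; you acknowledge this yourself in your closing paragraph. Second, $\tau_1$ is a priori an invariant of \emph{ribbon} equivalence rather than braided fusion equivalence; the paper disposes of this by noting that positivity of the quantum dimensions forces the spherical structure to be determined by the underlying fusion category, and your write-up should include that remark (or restrict to the canonical pivotal structures throughout) before asserting that braided equivalence forces equal Gauss sums. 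Your appeal to \cite[Theorem 5.4]{LKW2} as ``classification by central charge mod $8$'' is a slightly free paraphrase, but your final counting argument (16 pairwise inequivalent extensions out of exactly 16) already closes the converse without it, exactly as in the paper.
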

\begin{proof} Since $\CC_{(V^m)_{\bar{0}}}$ has positive quantum dimensions, its spherical pivotal structure is uniquely determined by the fusion category  $\CC_{(V^m)_{\bar{0}}}$. Therefore, $\CC_{(V^l)_{\bar{0}}}$, $\CC_{(V^m)_{\bar{0}}}$ are equivalent braided fusion categories if and only if they are equivalent modular categories. The later implies they have the same Gauss sums. Therefore, we proceed to compute the Gauss sum $\tau_1(\CC_{\Vl0})$. It follows from Lemma \ref{l:Gauss_sum} that
$$
\tau_1(\CC_{\Vl0}) = \sum_{X \in {\irr}(\CC_{\Vl0}^1)} \qdim(X)^2 \cdot \theta_X
$$
where ${\irr}(\CC^1_{\Vl0})$ is the set of inequivalent simple $\Vl0$-modules from the $\s$-twisted $V^l$-modules.

By Corollary \ref{ct}, the  inequivalent irreducible $(V^{l})_{\bar 0}$-modules from the $\sigma$-twisted modules are
 $$\{\VlZ \ot N^j,  \VlZ_{\bar 0}\ot N^k_{\bar r}+ \VlZ_{\bar 1}\ot N^k_{\ol {1-r}}\,|\,j=0,\dots ,q, k=q+1,\dots ,p, r
 =0,1\}$$
 if $l$ is even. The actions of $e^{2\pi i L^l(0)}$ on $\VlZ \ot N^j$ and $\VlZ_{\bar 0} \otimes N^k_{\bar r}+ \VlZ_{\bar 1}\ot N^k_{\ol {1-r}}$ are respectively are $e^{2\pi i(\lambda_{N^j}+\frac{l}{16})}$ and $e^{2\pi i(\lambda_{N^k}+\frac{l}{16})}$ for $r=0,1$, where $\lambda_{N^j}$ is the weight of $N^j$.  Therefore,
 \begin{align*}
    \tau_1(\CC_{\Vl0}) & = \sum_{j=0}^p  \qdim_{\Vl0}(\VlZ \ot N^j)^2 \cdot e^{2\pi i(\lambda_{N^j}+\frac{l}{16})} \\ &+\sum_{r=0}^1\sum_{k=p+1}^q  \qdim_{\Vl0}(\VlZ \ot N^k)_{\bar r}^2 \cdot e^{2\pi i(\lambda_{N^k}+\frac{l}{16})} \\
    & =e^{\frac{2\pi il}{16}} \left(\sum_{j=0}^p  \qdim_{V_{\bar 0}}(N^j)^2 \, e^{2\pi i \lambda_{N^j}} +\sum_{r=0}^1\sum_{k=p+1}^q  \qdim_{V_{\bar 0}}(N^k_{\bar r})^2 \, e^{2\pi i \lambda_{N^k}}\right)\\
    & = e^{\frac{2\pi il}{16}} \tau_1(\CC_{V_{\bar 0}})\,.
 \end{align*}
Again by Corollary \ref{ct}, the  inequivalent irreducible $(V^{l})_{\bar 0}$-modules from the $\sigma$-twisted modules are
$$\{N^{l,j}_{\bar r},  N^{l,k}|j=0,\dots ,q, k=q+1,\dots ,p,r=0,1\}$$
if $l$ is odd. The actions of $e^{2\pi i L^l(0)}$ on $N^{l,j}_{\bar r}$ is $e^{2\pi i(\lambda_{N^j}+\frac{l}{16})}$ and on $N^{l,k}$ is $e^{2\pi i(\lambda_{N^k}+\frac{l}{16})}$. Thus, for $r=0,1$,

\begin{align*}
    \tau_1(\CC_{\Vl0}) & = \sum_{r=0}^1 \sum_{j=0}^p  \qdim_{\Vl0}(N^{l,j}_{\bar r})^2 \, e^{2\pi i(\lambda_{N^j}+\frac{l}{16})} + \sum_{k=p+1}^q  \qdim_{\Vl0}(N^{l,k})^2 \, e^{2\pi i(\lambda_{N^k}+\frac{l}{16})} \\
    & =e^{\frac{2\pi il}{16}} \left(\sum_{r=0}^1 \sum_{j=0}^p  \frac{1}{2}\qdim_{\Vl0}(N^j)^2 \, e^{2\pi i\lambda_{N^j}} + \sum_{k=p+1}^q  2\qdim_{V_{\bar 0}}(N^k_{\bar r})^2 \, e^{2\pi i\lambda_{N^k}}\right) \\
    & =e^{\frac{2\pi il}{16}} \left(\sum_{j=0}^p \qdim_{\Vl0}(N^j)^2 \, e^{2\pi i\lambda_{N^j}} + \sum_{r=0}^1 \sum_{k=p+1}^q  \qdim_{V_{\bar 0}}(N^k_{\bar r})^2 \, e^{2\pi i\lambda_{N^k}}\right) \\
     &= e^{\frac{2\pi i l}{16}} \tau_1(\CC_{V_{\bar 0}})\,.
 \end{align*}

Therefore,  $\tau_1(\CC_{\Vl0}) = e^{\frac{2\pi i}{16}} \tau_1(\CC_{V_{\bar 0}})$ for any integer $l \ge 0$. As a result, $\tau_1(\CC_{\Vl0}) = \tau_1(\CC_{(V^m)_{\bar 0}})$ if and only if  $l \equiv m$ modulo $16$, and  there are at least 16 inequivalent modular categories which are minimal extensions of $\CC^0_{V_{\bar 0}}$. By  [LKW, Theorem 5.4], $\CC^0_{V_{\bar 0}}$ has exactly 16 minimal extensions. Thus, $\CC_{(V^l)_{\bar 0}}$ and $\CC_{(V^m)_{\bar 0}}$ are equivalent minimal extensions of $\CC^0_{V_{\bar 0}}$ if and only if $l \equiv m$ modulo $16$. These 16 minimal extensions of are also inequivalent as braided fusion categories as they have distinct Gauss sums.
\end{proof}

\end{document}